\documentclass[11pt]{article}
\usepackage{geometry}
\geometry{left=1in, right=1in, top=1.5in, bottom=1in}
\usepackage{amsmath, amssymb, amsthm, dsfont, mathrsfs}
\usepackage{graphicx, caption, float, multirow}

\usepackage{fouriernc, anyfontsize}

\usepackage{fancyhdr}
\pagestyle{fancy}
\fancyhf{}
\fancyhead[L]{Absolute-value Brownian integral}
\fancyhead[R]{Xia and Zhang}
\fancyfoot[C]{\thepage}
\setlength{\headheight}{13.6pt}

\usepackage[hidelinks]{hyperref}
\usepackage[dvipsnames]{xcolor}
\hypersetup{colorlinks=true, linkcolor=Orange, citecolor=Orange}

\numberwithin{equation}{section}

\allowdisplaybreaks

\newtheoremstyle{theorem}
  {10pt}
  {10pt}
  {\sl}
  {\parindent}
  {\bf}
  {. }
  { }
  {}
\theoremstyle{theorem}
\newtheorem{theorem}{Theorem}
\newtheorem{corollary}{Corollary}
\newtheorem{lemma}{Lemma}

\begin{document}

\title{\textbf{On the Absolute-Value Integral of a Brownian Motion with Drift: Exact and Asymptotic Formulae}}
\author{Weixuan Xia\thanks{Department of Mathematics, University of Southern California, Los Angeles, CA 90089, USA. \newline\indent Email: \underline{weixuanx@usc.edu}} \and Yuyang Zhang\thanks{Department of Finance, Boston University, Boston, MA 02215, USA. Email: \underline{yyz@bu.edu}}}
\date{2023}
\maketitle
\thispagestyle{plain}

\begin{abstract}
  The present paper is concerned with the integral of the absolute value of a Brownian motion with drift. By establishing an asymptotic expansion of the space Laplace transform, we obtain series representations for the probability density function and cumulative distribution function of the integral, making use of Meijer's G-function. A functional recursive formula is derived for the moments, which is shown to yield only exponentials and Gauss' error function up to arbitrary orders, permitting exact computations. To obtain sharp asymptotic estimates for small- and large-deviation probabilities, we employ a marginal space--time Laplace transform and apply a newly developed generalization of Laplace's method to exponential Airy integrals. The impact of drift on the complete distribution of the integral is explored in depth. The resultant new formulae complement existing ones in the standard Brownian motion case to great extent in terms of both theoretical generality and modeling capacity and have been presented for easy implementation, which numerical experiments demonstrate.
  \medskip\\
  \textbf{MSC2020 Classifications:} 60E10; 60G15; 33C10\medskip\\
  \textbf{Key Words:} Absolute-value integral; Brownian motion with drift; Distribution functions; Moments; Asymptotic estimates; Generalized Laplace's method
\end{abstract}

\medskip

\newcommand{\dd}{{\rm d}}
\newcommand{\pd}{\partial}
\newcommand{\ii}{{\rm i}}
\newcommand{\PP}{\mathbb{P}}
\newcommand{\E}{\mathbb{E}}
\newcommand{\Var}{\mathrm{Var}}
\newcommand{\Skew}{\mathrm{Skew}}
\newcommand{\EKurt}{\mathrm{EKurt}}
\newcommand{\Ai}{\mathrm{Ai}}
\newcommand{\Bi}{\mathrm{Bi}}
\newcommand{\Gf}{\mathrm{\Gamma}}
\newcommand{\F}{\mathrm{F}}
\newcommand{\G}{\mathrm{G}}
\newcommand{\sech}{\mathrm{sech}}
\newcommand{\Res}{\mathrm{Res}}
\newcommand{\erf}{\mathrm{erf}}

\renewcommand{\Re}{\mathrm{Re}}
\renewcommand{\Im}{\mathrm{Im}}
\renewcommand{\Sigma}{\varSigma}
\renewcommand{\Psi}{\varPsi}
\renewcommand{\Phi}{\varPhi}

\section{Introduction}\label{sec:1}

Studies of the integral of the absolute value of a standard Brownian motion can be traced back to \cite[Kac, 1946]{K1}, which provided an asymptotic series representation for its direct Laplace transform. The result can be seen as a consequence of applying the Kac formula developed in \cite[Kac, 1949]{K2}. Later, \cite[Tak\'{a}cs, 1993]{T1} managed to invert this Laplace transform and provided a rapidly converging series representation for the probability density function, in addition to deriving recursive formulae for the moments. Large-deviation probabilities were subsequently obtained in \cite[Tolmatz, 2003]{T4}. A good number of works have been devoted to studying similar absolute-value Brownian functionals ever since, covering both exact and asymptotic formulae; we refer to \cite[Shepp, 1982]{S1}, \cite[Johnson and Killeen, 1983]{JK}, and \cite[Tolmatz, 2000]{T2} for the Brownian bridge specifically, to \cite[Louchard, 1984]{L1}, \cite[Perman and Wellner, 1996]{PW}, \cite[Janson, 2007]{J1}, and \cite[Janson and Louchard, 2007]{JL} for Brownian excursions and Brownian meanders (including double meanders), and to \cite[Castillo and Boyer, 2016]{CB} for the multidimensional case, among others. Noteworthily, the above works have all emphasized the centeredness of the Brownian motion. Meanwhile, it is appealing to introduce a linear drift component so that the integral functionals are operated on a general continuous L\'{e}vy process; with additional parametrization, the drifted versions are much more adaptive to applications in various directions (discussed below). In such a case, we stress that the drift component will significantly complicate the analysis because it does not scale with the Brownian motion in time.

Let $W\equiv(W_{t})_{t\geq0}$ be a standard Brownian motion defined on a probability space $(\Omega,\mathcal{F},\PP)$ and construct the associated Brownian motion with drift
\begin{equation}\label{1.1}
  \tilde{W}_{t}:=\mu t+\sigma W_{t},\quad t\geq0,
\end{equation}
where the only two parameters, $\mu\in\mathds{R}$ and $\sigma>0$, are referred to as the drift parameter and dispersion parameter, respectively. In this paper we are primarily interested in the process $X\equiv(X_{t})_{t\geq0}$ defined by
\begin{equation}\label{1.2}
  X_{t}:=\int^{t}_{0}|\tilde{W}_{s}|\dd s\equiv\int^{t}_{0}|\mu s+\sigma W_{s}|\dd s,\quad t\geq0,
\end{equation}
which can be viewed as the $\mathrm{L}^{1}$-norm of $\tilde{W}$ taken over time. By construction, $\PP\{X_{0}=0\}=1$ and $X$ has $\PP$-\text{a.s.} positive and increasing sample paths. It is also immediate from (\ref{1.2}) that the sign of $\mu$ has no effect on $X$ up to indistinguishability.

With these fundamental properties, the process $X$ is a natural substitute for the closely related square integral functional $\big(\int^{t}_{0}\tilde{W}^{2}_{s}\dd s\big)_{t\geq0}$, corresponding to the $\mathrm{L}^{2}$-norm, whose distribution has been thoroughly studied in \cite[Xia, 2020]{X} (see also \cite[Tolmatz, 2002]{T2} for the case of a standard Brownian bridge), and the exponential functional $\big(\int^{t}_{0}e^{\tilde{W}_{s}}\dd s\big)_{t\geq0}$, which has a long strand of literature (see, e.g., \cite[Lyasoff, 2016]{L3} for a detailed analysis with the inclusion of drift). Specifically, $X$ finds applications in reliability engineering as a model for the random wear of structural materials \cite[Van Noortwijk, 2009]{vN} and, in financial engineering, when establishing stochastic volatility based on the technique of stochastic time change (\cite[Carr et al., 2003]{CGMY}) or when describing stochastic intensities in a reduced-form credit risk model (\cite[Gourieroux et al., 2006]{GMP}). It is worth noting that in the last aspect, the square integral has recently been adopted by \cite[Chen et al., 2025]{CWX}, which has ensured explicit evaluation of the cumulative intensity under measure changes -- a very useful property that affine jump-diffusion models do not have; in connection with this, the drift component is crucial in generating sizable leverage effects. In comparison, the process $X$ tends to behave less variably than its square counterpart and is more suitable for describing calmer random phenomena for the same purposes. This is our main motivation to study (\ref{1.2}).

This being said, the process $X$ also bears useful connections with various tools for statistical inference. For instance, the standard integral $\int^{1}_{0}|W_{s}|\dd s$ is heavily tied to the so-called ``empirical process'' that appears in goodness-of-fit tests after applying the Khmaladze martingale transformation (\cite[Khmaladze, 1993]{K3}), a technique designed to establish distributional independency when multiple parametric hypotheses are in place. In particular, the distribution of this integral is the (model-free) limit distribution of the corresponding test statistic based on the $\mathrm{L}^{1}$-norm, which is more robust to significant outliers than the commonly adopted statistics based on the $\mathrm{L}^{\infty}$-norm or the $\mathrm{L}^{2}$-norm; see, e.g., \cite[Roberts et al., 2025]{RHS} for illustrations of the latter two. The distribution function will also be explicitly given as a side result of our analysis. In the same direction, the general integral $X_{1}$ can also be identified as the (parameterizable) limit distribution of the conditional log-likelihood function when independent observations have Laplace distributions with normal prior means. We note that likelihood functions of this type sit at the heart of hierarchical matrix factorization for the exponential family; see, e.g., \cite[Wang and Carvalho, 2023]{WC} for a recent development.

On surface, analyzing the distribution of $X_{t}$ (for generic $t>0$) under the parametrization $\{\mu\in\mathds{R},\sigma>0\}$ is much subtler than doing the square functional as no closed-form formula exists for its direct Laplace transform -- not even in the centered case. For this reason, our analysis will also bring about new techniques for handling tail-integrated composite functions and line integrals involving exponential Airy functions. The proposed exact formulae make a decent addition to the collection of distribution formulae for many Brownian functionals in \cite[Borodin and Salminen, 2002]{BS}, while the asymptotic formulae can well be contrasted with those available in \cite[Tolmatz, 2003]{T4}.

\vspace{0.1in}

\section{Laplace transform}\label{sec:2}

To facilitate analysis, we often write $X$ as
\begin{equation}\label{2.1}
  X_{t}=\sigma\int^{t}_{0}|bs+W_{s}|\dd s,\quad t\geq0
\end{equation}
with $b=\mu/\sigma\in\mathds{R}$. In what follows we shall take time $t>0$ as generic and start by considering the space Laplace transform (i.e., with respect to the probability measure),
\begin{equation}\label{2.2}
  \bar{f}_{X_{t}}(u):=\E e^{-uX_{t}},\quad\Re u>0.
\end{equation}
We shall provide an asymptotic formula for $\bar{f}_{X_{t}}(u)$ for $|u|\rightarrow\infty$, which enables us to derive rapidly converging series representations for the corresponding distribution functions. An alternative formula for small $|u|$ will be deduced in Section \ref{sec:3}. Throughout, fractional powers of complex numbers are all taken to be their principal values.

\begin{theorem}\label{thm:1}
Assume $\mu\neq0$. The space Laplace transform (\ref{2.2}) of $X_{t}$ admits the following series representation:
\begin{align}\label{2.3}
  \bar{f}_{X_{t}}(u)&=e^{-\mu^{2}t/(2\sigma^{2})}\sum^{\infty}_{k=1}\sum^{k-1}_{j=0}\frac{\mu^{2j}e^{((\sigma
  u)^{2}/2)^{1/3}t\alpha'_{k-j}}}{(2\sigma^{4}u)^{2j/3}(-\alpha'_{k-j})\Ai(\alpha'_{k-j})(2j)!}\sum^{2j}_{i=0}\binom{2j}{i}(-\alpha'_{k-j})^{2j-i} \nonumber\\
  &\quad\;\times\bigg(\frac{\Gf(i)}{3^{i/3}\Gf(i/3)}+\frac{(-1)^{i}(-\alpha'_{k-j})^{i+1}}{3^{1/3}}
  \bigg(\frac{1}{3^{1/3}\Gf(2/3)(i+1)}{\;_{1}\F_{2}}\bigg(\frac{i+1}{3};\frac{2}{3},\frac{i+4}{3}\bigg|\frac{(\alpha'_{k-j})^{3}}{9}\bigg) \nonumber\\
  &\qquad+\frac{(-\alpha'_{k-j})}{\Gf(1/3)(i+2)}
  {\;_{1}\F_{2}}\bigg(\frac{i+2}{3};\frac{4}{3},\frac{i+5}{3}\bigg|\frac{(\alpha'_{k-j})^{3}}{9}\bigg)\bigg),\quad\Re u>0,
\end{align}
where $\Ai\equiv\Ai(\cdot)$ is the first Airy function and $\{\alpha'_{k}:k\in\mathds{N}_{++}\}\subset\mathds{R}_{--}$ are the zeros of its
derivative, ordered in such a way that $\alpha'_{k}>\alpha'_{k+1}$, $\forall k$, $\Gf\equiv\Gf(\cdot)$ is the usual gamma function and
${\;_{1}\F_{2}}\equiv{\;_{1}\F_{2}}(\cdot;\cdot,\cdot|\cdot)$ is a hypergeometric function,\footnote{We refer to \cite[Abramowitz and Stegun, 1972, \text{Sect.} 10.4]{AS} for detailed properties of the Airy function (with connections to other Bessel-type functions) and to \cite[Slater, 1966]{S2} for that of generalized hypergeometric functions of arbitrary dimensions.} and the limiting value
$\Gf(i)/(3^{i/3}\Gf(i/3))|_{i=0}=1/3$ is understood.
\end{theorem}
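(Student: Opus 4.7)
\medskip
\noindent\textbf{Proof plan.} The strategy I would follow combines a Girsanov change of measure, a Feynman--Kac reduction on the half line, and residue calculus. First, applying Girsanov's theorem with density $e^{-bW_{t}-b^{2}t/2}$, where $b:=\mu/\sigma$, absorbs the drift in $\tilde{W}$ and gives
\begin{equation*}
\bar{f}_{X_{t}}(u)=e^{-\mu^{2}t/(2\sigma^{2})}\,\E\bigl[e^{bW_{t}}e^{-u\sigma\int^{t}_{0}|W_{s}|\dd s}\bigr],
\end{equation*}
with $W$ a standard Brownian motion. The symmetry $W\stackrel{\rm d}{=}-W$ eliminates the odd part of $e^{bW_{t}}$, so the expansion $\cosh(bW_{t})=\sum_{j\geq0}b^{2j}W_{t}^{2j}/(2j)!$ reduces the problem to computing $m_{2j}(t):=\E[W_{t}^{2j}e^{-u\sigma A_{t}}]$ for each $j\geq0$, where $A_{t}:=\int^{t}_{0}|W_{s}|\dd s$. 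This already explains the overall factor $e^{-\mu^{2}t/(2\sigma^{2})}$ and the weights $\mu^{2j}/(2j)!$ appearing in (\ref{2.3}).

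\medskip

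Next, for fixed $j$, the function $v_{2j}(t,y):=\E^{y}[W_{t}^{2j}e^{-u\sigma\int^{t}_{0}|W_{s}|\dd s}]$ solves the Feynman--Kac PDE $\pd_{t}v_{2j}=\tfrac{1}{2}\pd^{2}_{y}v_{2j}-u\sigma|y|v_{2j}$, $v_{2j}(0,y)=y^{2j}$. A temporal Laplace transform converts this to the inhomogeneous Airy-type ODE $\tilde{v}''_{2j}-2(u\sigma|y|+\lambda)\tilde{v}_{2j}=-2y^{2j}$, whose homogeneous part becomes Airy's equation under $\xi=(2u\sigma)^{1/3}y+\xi_{0}$ with $\xi_{0}:=2^{1/3}(u\sigma)^{-2/3}\lambda$. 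A Green's function built from $\Ai$ and $\Bi$, with the Neumann condition $\tilde{v}_{2j}'(\lambda,0)=0$ (inherited from the even symmetry of $v_{2j}$) and decay at $+\infty$, yields
\begin{equation*}
\tilde{v}_{2j}(\lambda,0)=-\frac{2(2u\sigma)^{-2(j+1)/3}}{\Ai'(\xi_{0})}\int^{\infty}_{\xi_{0}}\Ai(\xi)(\xi-\xi_{0})^{2j}\dd\xi,
\end{equation*}
which is meromorphic in $\lambda$ with simple poles exactly at $\xi_{0}=\alpha'_{\ell}$, i.e., at $\lambda_{\ell}:=\alpha'_{\ell}((u\sigma)^{2}/2)^{1/3}$, $\ell\geq1$. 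Inverting the Laplace transform by summing residues---and using Airy's ODE to replace $\Ai''(\alpha'_{\ell})$ by $\alpha'_{\ell}\Ai(\alpha'_{\ell})$ in the pole denominator---produces $m_{2j}(t)$ as a series over $\ell$ carrying the exponential $e^{\lambda_{\ell}t}$ and the weight $((-\alpha'_{\ell})\Ai(\alpha'_{\ell}))^{-1}$.

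\medskip

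The final analytical step is a closed-form evaluation of $\int^{\infty}_{\alpha'_{\ell}}\Ai(\xi)(\xi-\alpha'_{\ell})^{2j}\dd\xi$. Binomial expansion reduces it to the moments $\int^{\infty}_{\alpha'_{\ell}}\xi^{i}\Ai(\xi)\dd\xi$, $0\leq i\leq2j$, which I split at $\xi=0$. The upper half evaluates by the classical formula $\Gf(i)/(3^{i/3}\Gf(i/3))$ (with the stated limit $1/3$ at $i=0$), while the lower half $\int^{0}_{\alpha'_{\ell}}\xi^{i}\Ai(\xi)\dd\xi$ is computed by inserting the ${_{0}\F_{1}}$ series representation of $\Ai(-t)$ and integrating termwise; the Pochhammer identity $\gamma/(\gamma+n)=(\gamma)_{n}/(\gamma+1)_{n}$ with $\gamma=(i+1)/3$ (resp.\ $(i+2)/3$) then rewrites the two resulting series as the two ${\;_{1}\F_{2}}$ functions of argument $(\alpha'_{\ell})^{3}/9$ featured in (\ref{2.3}). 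Rewriting $b^{2j}/(2u\sigma)^{2j/3}=\mu^{2j}/(2\sigma^{4}u)^{2j/3}$ and regrouping the resulting double sum along the diagonal $k:=\ell+j$---so that, for each $k\geq1$, $j$ runs over $0,\ldots,k-1$ and $\ell$ is relabeled $k-j$---recovers exactly (\ref{2.3}).

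\medskip

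I expect the principal obstacle to be the rigorous justification of the residue inversion: one must verify that $\tilde{v}_{2j}(\lambda,0)$ is meromorphic with only the simple poles stated above and that the Bromwich contour may be closed to the left with vanishing arc contribution, both of which require uniform large-$|\lambda|$ bounds on $\Ai$ and $\Ai'$ along vertical lines in the complex plane. The subsequent diagonal regrouping by $k=\ell+j$ is a secondary subtlety but becomes legitimate because $|\alpha'_{\ell}|\sim(3\pi\ell/2)^{2/3}$ makes $e^{\alpha'_{\ell}((u\sigma)^{2}/2)^{1/3}t}$ decay at a stretched-exponential rate in $\ell$, while $(2\sigma^{4}u)^{-2j/3}$ supplies polynomial decay in $j$, jointly guaranteeing absolute convergence of the series and the legitimacy of the rearrangement.
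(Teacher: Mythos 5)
Your proposal is correct in outline and lands on exactly the same intermediate objects as the paper: after the identical Cameron--Martin--Girsanov step, both arguments reduce the problem to the Airy moment integrals $\mathcal{I}_{\ell,j}=\int^{\infty}_{\alpha'_{\ell}}(\xi-\alpha'_{\ell})^{2j}\Ai(\xi)\,\dd\xi$, evaluate them identically (Mellin transform of $\Ai$ on $[0,\infty)$ plus termwise integration of the $_{0}\F_{1}$ series on $[\alpha'_{\ell},0]$, which is precisely the Euler transform the paper invokes), and perform the same diagonal reindexing $k=\ell+j$. Where you genuinely diverge is in the middle: the paper simply quotes the Borodin--Salminen eigenfunction expansion of $\E(e^{-vY_{t}};W_{t}\in\dd z)$, integrates $e^{bz}$ against it, and expands the exponentials around $\alpha'_{k}$; you instead expand $\cosh(bW_{t})$ into even moments first and then re-derive the spectral input from scratch via Feynman--Kac, a temporal Laplace transform, a Neumann Green's function, and residue summation. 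I checked your resolvent formula $\tilde{v}_{2j}(\lambda,0)$ against the known formula $\int^{\infty}_{0}e^{-\lambda t}\E(e^{-vY_{t}};W_{t}\in\dd z)\dd t=-\Ai(\xi_{0}+(2v)^{1/3}|z|)/((2v)^{1/3}\Ai'(\xi_{0}))\,\dd z$ and the residue computation (using $\Ai''(\alpha'_{\ell})=\alpha'_{\ell}\Ai(\alpha'_{\ell})$); both are right and reproduce the weights $((-\alpha'_{\ell})\Ai(\alpha'_{\ell}))^{-1}$ and $e^{((\sigma u)^{2}/2)^{1/3}t\alpha'_{\ell}}$. What the paper's route buys is that the hard analytic step you correctly flag as the ``principal obstacle''---meromorphy of the resolvent, absence of other singularities, and closing the Bromwich contour---is exactly the content of the cited handbook formula, so it never has to be proved; your route is self-contained but owes that justification. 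Two small corrections: the factor $(2\sigma^{4}u)^{-2j/3}$ is geometric in $j$, not polynomially decaying, and for $|2\sigma^{4}u|<1$ it actually grows; absolute convergence of the $j$-sum is instead carried by the $(2j)!$ in the denominator dominating the growth of $\mathcal{I}_{\ell,j}$ (which is roughly $(2j)^{4j/3}$ by a Laplace-type estimate on $\max_{w}(w-\alpha'_{\ell})^{2j}e^{-\frac{2}{3}w^{3/2}}$). With that fix the rearrangement along the diagonal is legitimate as you claim.
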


\begin{proof}
A direct application of the classical Cameron--Martin--Girsanov theorem allows to rewrite the Laplace transform as
\begin{equation}\label{2.4}
  \bar{f}_{X_{t}}(u)=\E e^{bW_{t}-b^{2}t/2-\sigma uY_{t}},\quad\Re u>0,
\end{equation}
where $Y$ is just $X$ with $\mu=0$ and $\sigma=1$, i.e., the absolute-value integral of $W$.

Denote $v=\sigma u$ for convenience. It is known (see, e.g., \cite[Borodin and Salminen, 2002, \text{Eq.} 1.8.7.(1)]{BS}) that
\begin{equation*}
  \E(e^{-vY_{t}};W_{t}\in\dd
  z)=\sum^{\infty}_{k=1}\frac{v^{1/3}e^{(v^{2}/2)^{1/3}t\alpha'_{k}}\Ai(\alpha'_{k}+(2v)^{1/3}|z|)}{2^{2/3}(-\alpha'_{k})\Ai(\alpha'_{k})}\dd
  z,\quad\Re v>0,\;z\in\mathds{R}.
\end{equation*}
Noting that $\Ai(w)=O(w^{-1/4}e^{-2/3w^{3/2}})$ as $|w|\rightarrow\infty$ for $|\arg w|<\pi/2$ and the above series converges absolutely, by marginalizing $e^{bz}\E(e^{-vY_{t}};W_{t}\in\dd z)$ over $z\in\mathds{R}$ along with the Fubini theorem we have
\begin{align}\label{2.5}
  \bar{f}_{X_{t}}(u)&=\int_{\mathds{R}}e^{bz-b^{2}t/2}\sum^{\infty}_{k=1}\frac{v^{1/3}e^{(v^{2}/2)^{1/3}t\alpha'_{k}}\Ai(\alpha'_{k}+(2v)^{1/3}|z|)}
  {2^{2/3}(-\alpha'_{k})\Ai(\alpha'_{k})}\dd z \nonumber\\
  &=\sum^{\infty}_{k=1}\frac{v^{1/3}e^{(v^{2}/2)^{1/3}t\alpha'_{k}-b^{2}t/2}}
  {2^{2/3}(-\alpha'_{k})\Ai(\alpha'_{k})}\int_{\mathds{R}}e^{bz}\Ai(\alpha'_{k}+(2v)^{1/3}|z|)\dd z.
\end{align}
Since $\Ai$ is an entire function, by the substitutions $\pm(2v)^{1/3}z\mapsto z$ the last integral can be reformatted into
\begin{align*}
  &\quad\;(2v)^{-1/3}\bigg(\int^{\infty}_{\alpha'_{k}}e^{b(2v)^{-1/3}(\alpha'_{k}-z)}\Ai(z)\dd
  z+\int^{\infty}_{\alpha'_{k}}e^{b(2v)^{-1/3}(z-\alpha'_{k})}\Ai(z)\dd z\bigg) \\
  &=2^{2/3}v^{-1/3}\sum^{\infty}_{j=0}(b(2v)^{-1/3})^{2j}\int^{\infty}_{\alpha'_{k}}(z-\alpha'_{k})^{2j}\Ai(z)\dd z,
\end{align*}
where the equality follows from expanding the exponentials around $z=\alpha'_{k}$. Hence, if we define the integrals
\begin{equation}\label{2.6}
  \mathcal{I}_{k,j}:=\int^{\infty}_{\alpha'_{k}}(z-\alpha'_{k})^{2j}\Ai(z)\dd z,\quad k\in\mathds{N}_{++},\;j\in\mathds{N}
\end{equation}
then plugging it into (\ref{2.5}) yields us the formula
\begin{equation*}
  \bar{f}_{X_{t}}(u)=\sum^{\infty}_{k=1}\sum^{\infty}_{j=0}\frac{(b(2v)^{-1/3})^{2j}e^{(v^{2}/2)^{1/3}t\alpha'_{k}-b^{2}t/2}\mathcal{I}_{k,j}}
  {(-\alpha'_{k})\Ai(\alpha'_{k})(2j)!},\quad v=\sigma u,
\end{equation*}
which is equivalent to
\begin{equation}\label{2.7}
  \bar{f}_{X_{t}}(u)=e^{-\mu^{2}t/(2\sigma^{2})}\sum^{\infty}_{k=1}\sum^{k-1}_{j=0}\frac{\mu^{2j}e^{((\sigma
  u)^{2}/2)^{1/3}t\alpha'_{k-j}}\mathcal{I}_{k-j,j}}{(2\sigma^{4}u)^{2j/3}(-\alpha'_{k-j})\Ai(\alpha'_{k-j})(2j)!}.
\end{equation}

To evaluate (\ref{2.6}), we apply binomial expansions to write
\begin{align*}
  \mathcal{I}_{k,j}&=\sum^{2j}_{i=0}\binom{2j}{i}(-\alpha'_{k})^{2j-i}\int^{\infty}_{\alpha'_{k}}z^{i}\Ai(z)\dd z \\
  &=\sum^{2j}_{i=0}\binom{2j}{i}(-\alpha'_{k})^{2j-i}\bigg(\int^{\infty}_{0}+\int^{0}_{\alpha'_{k}}\bigg)z^{i}\Ai(z)\dd z,\quad k\in\mathds{N}_{++},\;j\in\mathds{N},
\end{align*}
where the second equality follows from the continuous differentiability of the integrand with $\alpha'_{k}$'s being all negative. The first integral, corresponding to the Mellin transform of $\Ai$, is well-known (see, e.g., \cite[Vall\'{e}e and Soares, 2010, \text{Eq.} 3.86]{VS}), and we have
\begin{equation*}
  \int^{\infty}_{0}z^{i}\Ai(z)\dd z=\frac{\Gf(i)}{3^{i/3}\Gf(i/3)}
\end{equation*}
with the understanding that $\int^{\infty}_{0}\Ai(z)\dd z=1/3$ if $i=0$. For the second integral, it is enough to look at the primitive. Note that $\Ai$ is directly linked to the (generalized) hypergeometric function via (see \cite[Abramowitz and Stegun, 1972, \text{Sect.} 10.4]{AS})
\begin{equation*}
  \Ai(z)=\frac{1}{3^{2/3}\Gf(2/3)}\;_{0}\F_{1}\bigg(;\frac{2}{3}\bigg|\frac{z^{3}}{9}\bigg) -\frac{z}{3^{1/3}\Gf(1/3)}\;_{0}\F_{1}\bigg(;\frac{4}{3}\bigg|\frac{z^{3}}{9}\bigg),\quad z\in\mathds{C}.
\end{equation*}
Then, as a result of the general Euler transform for hypergeometric functions (see \cite[Slater, 1966, \text{Eq.} 4.1.2]{S2}), we have
\begin{equation*}
  \int z^{i}\Ai(z)\dd z=\frac{z^{i+1}}{3^{2/3}\Gf(2/3)(i+1)}\;_{1}\F_{2}\bigg(\frac{i+1}{3};\frac{2}{3},\frac{i+4}{3}\bigg|\frac{z^{3}}{9}\bigg) -\frac{z^{i+2}}{3^{1/3}\Gf(1/3)(i+2)}\;_{1}\F_{2}\bigg(\frac{i+2}{3};\frac{4}{3},\frac{i+5}{3}\bigg|\frac{z^{3}}{9}\bigg),
\end{equation*}
which can be straight evaluated at the lower limit $\alpha'_{k}$ because $\;_{1}\F_{2}$ is an entire function with the specified parameters, while its vanishing at $z=0$ is clear.

Thus, we have obtained for (\ref{2.6})
\begin{align}\label{2.8}
  \mathcal{I}_{k,j}&=\sum^{2j}_{i=0}\binom{2j}{i}(-\alpha'_{k})^{2j-i}\bigg(\frac{\Gf(i)}{3^{i/3}\Gf(i/3)}+\frac{(-1)^{i}(-\alpha'_{k})^{i+1}}{3^{1/3}} \bigg(\frac{1}{3^{1/3}\Gf(2/3)(i+1)}{\;_{1}\F_{2}}\bigg(\frac{i+1}{3};\frac{2}{3},\frac{i+4}{3}\bigg|\frac{(\alpha'_{k})^{3}}{9}\bigg) \nonumber\\
  &\qquad+\frac{(-\alpha'_{k})}{\Gf(1/3)(i+2)}{\;_{1}\F_{2}}\bigg(\frac{i+2}{3};\frac{4}{3},\frac{i+5}{3}\bigg|\frac{(\alpha'_{k})^{3}}{9}\bigg)\bigg).
\end{align}
Putting (\ref{2.8}) into (\ref{2.5}) and arranging terms we have the desired formula.
\end{proof}

An alternative way to obtain the formula (\ref{2.3}) is by applying the celebrated Feynman--Kac formula and solving a partial differential equation that governs the function $\mathds{R}_{+}\times\mathds{R}\ni(t,z)\mapsto\varphi(t,z):=\E e^{-u\int^{t}_{0}|\mu s+\sigma(W_{s}+z)|\dd s}$. More specifically, this equation reads
\begin{equation*}
  \frac{\pd}{\pd t}\varphi(t,z)=\frac{1}{2}\sigma^{2}\frac{\pd^{2}}{\pd z^{2}}\varphi(t,z)+\mu\frac{\pd}{\pd z}\varphi(t,z)-u|z|\varphi(t,z),
\end{equation*}
subject to the initial condition $\varphi(0,z)=1$, $\forall z\in\mathds{R}$, which can be tackled via standard eigenfunction expansions (see, e.g., \cite[Bell, 1944]{B} for a useful reference). In any case, the adopted method involving the Cameron--Martin--Girsanov theorem is considerably simpler for our purpose.

With the initial value theorem in mind, the series representation (\ref{2.3}) is useful for deducing the required distribution functions of $X_{t}$, which we shall address in the next section. It is, nevertheless, cumbersome for direct implementation (including that of the corresponding characteristic function) as the series is divergent in the limit as $|u|\rightarrow0$; for the same reason, it also cannot be used to analyze the moments of $X_{t}$.

Upon sending $\mu\rightarrow0$ in (\ref{2.3}), the second sum reduces to a single term at $j=0$ and we have the simplified formula
\begin{align}\label{2.9}
  \bar{f}_{X_{t}}(u;\mu=0)&=\sum^{\infty}_{k=1}\frac{e^{((\sigma
  u)^{2}/2)^{1/3}t\alpha'_{k}}}{(-\alpha'_{k})\Ai(\alpha'_{k})}\bigg(\frac{1}{3}+\frac{(-\alpha'_{k})}{3^{1/3}}
  \bigg(\frac{1}{3^{1/3}\Gf(2/3)}{\;_{1}\F_{2}}\bigg(\frac{1}{3};\frac{2}{3},\frac{4}{3}\bigg|\frac{(\alpha'_{k})^{3}}{9}\bigg) \nonumber\\
  &\qquad+\frac{(-\alpha'_{k})}{2\Gf(1/3)}{\;_{1}\F_{2}}\bigg(\frac{2}{3};\frac{4}{3},\frac{5}{3}\bigg|\frac{(\alpha'_{k})^{3}}{9}\bigg)\bigg), \quad\Re u>0,
\end{align}
which is essentially a scaled version of Kac's original discovery in \cite[Kac, 1946]{K1} albeit with explicit evaluation of the Airy integrals.

\vspace{0.2in}

\section{Distribution functions}\label{sec:3}

We proceed to deriving formulae for the distribution functions of $X_{t}$. Recall that the series representation (\ref{2.3}) is absolutely convergent for $\Re u>0$ and hence can be termwise inverted. This automatically guarantees the existence of a probability density function, denoted as
\begin{equation}\label{3.1}
  f_{X_{t}}(x):=\frac{\PP\{X_{t}\in\dd x\}}{\dd x},\quad x>0.
\end{equation}

\begin{theorem}\label{thm:2}
Assume $\mu\neq0$. The probability density function (\ref{3.1}) of $X_{t}$ is given by
\begin{align}\label{3.2}
  f_{X_{t}}(x)&=\frac{e^{-\mu^{2}t/(2\sigma^{2})}}{\sqrt{\pi}x}\sum^{\infty}_{k=1}\sum^{k-1}_{j=0} \bigg(\frac{\mu^{2}t(-\alpha'_{k-j})}{2\sigma^{2}}\bigg)^{j}\frac{1}{3^{j-1/2}(-\alpha'_{k-j})\Ai(\alpha'_{k-j})(2j)!} \nonumber\\
  &\quad\times\sum^{2j}_{i=0}\binom{2j}{i}(-\alpha'_{k-j})^{2j-i}\bigg(\frac{\Gf(i)}{3^{i/3}\Gf(i/3)}+\frac{(-1)^{i}(-\alpha'_{k-j})^{i+1}}{3^{1/3}} \nonumber\\
  &\qquad\times\bigg(\frac{1}{3^{1/3}\Gf(2/3)(i+1)}{\;_{1}\F_{2}}\bigg(\frac{i+1}{3};\frac{2}{3},\frac{i+4}{3}\bigg|\frac{(\alpha'_{k-j})^{3}}{9}\bigg) +\frac{(-\alpha'_{k-j})}{\Gf(1/3)(i+2)}{\;_{1}\F_{2}}\bigg(\frac{i+2}{3};\frac{4}{3},\frac{i+5}{3}\bigg|\frac{(\alpha'_{k-j})^{3}}{9}\bigg)\bigg) \nonumber\\
  &\quad\times\G^{0,3}_{3,2}\bigg(
  \begin{array}{ccccc}
    (j+1)/3 & (j+2)/3 & j/3+1 &  &  \\
     &  &  & 1/2 & 1
  \end{array}
  \bigg|\frac{27x^{2}}{2\sigma^{2}t^{3}(-\alpha'_{k-j})^{3}}\bigg),\quad x>0,
\end{align}
where $\G\equiv\G^{\cdot,\cdot}_{\cdot,\cdot}(\cdots|\cdot)$ denotes Meijer's G-function.\footnote{We refer to \cite[Erd\'{e}lyi et al., 1953, \text{Chap.} V]{EMOT} for various definitions and detailed properties of the G-function.}
\end{theorem}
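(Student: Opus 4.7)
The plan is to invert (\ref{2.7}) termwise via the Bromwich integral. Since (\ref{2.3}) converges absolutely on $\{\Re u>0\}$ and the zeros of $\Ai'$ grow like $\alpha'_{k}\sim-(3\pi k/2)^{2/3}$, the inversion can be exchanged with the double sum, so the task reduces to computing, for each $(k,j)$, the inverse Laplace transform of $u^{-2j/3}e^{-c_{k-j}u^{2/3}}$ with $c_{k-j}:=(\sigma^{2}/2)^{1/3}t(-\alpha'_{k-j})>0$. The remaining $(k,j)$-dependent factor
\begin{equation*}
  \frac{\mu^{2j}\mathcal{I}_{k-j,j}}{(2\sigma^{4})^{2j/3}(-\alpha'_{k-j})\Ai(\alpha'_{k-j})(2j)!}
\end{equation*}
is $u$-independent and travels through unchanged, as does the overall constant $e^{-\mu^{2}t/(2\sigma^{2})}$.

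To evaluate the kernel I would take a Mellin--Barnes route. Starting from the Cahen--Mellin representation $e^{-y}=(2\pi\ii)^{-1}\int_{L}\Gf(s)y^{-s}\dd s$ and rescaling $s\mapsto 3s/2$, one obtains
\begin{equation*}
  \frac{e^{-cu^{2/3}}}{u^{2j/3}}=\frac{3}{2}\cdot\frac{1}{2\pi\ii}\int_{L}\Gf(3s/2)\,c^{-3s/2}\,u^{-s-2j/3}\dd s,
\end{equation*}
and termwise inversion via $\mathcal{L}^{-1}[u^{-\alpha}](x)=x^{\alpha-1}/\Gf(\alpha)$ (justified by the super-exponential decay of $|\Gf(3s/2)|$ on vertical contours) yields
\begin{equation*}
  \mathcal{L}^{-1}\bigg[\frac{e^{-cu^{2/3}}}{u^{2j/3}}\bigg](x)=\frac{3}{2}\cdot\frac{1}{2\pi\ii}\int_{L}\frac{\Gf(3s/2)}{\Gf(s+2j/3)}\,c^{-3s/2}x^{s+2j/3-1}\dd s.
\end{equation*}

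Next, I would apply the Gauss triplication identity $\Gf(3w)=(3^{3w-1/2}/(2\pi))\Gf(w)\Gf(w+1/3)\Gf(w+2/3)$ (with $w=s/2$) to the numerator gamma and the Legendre duplication identity to split the denominator gamma as $\Gf(2w+2j/3)=(2^{2w+2j/3-1/2}/\sqrt{2\pi})\Gf(w+j/3)\Gf(w+j/3+1/2)$. A subsequent contour shift $w\mapsto w-j/3$ moves all $j$-dependence into the numerator parameters, and the residual integral then matches the standard Mellin--Barnes representation of the Meijer G-function $\G^{0,3}_{3,2}$ with upper parameters $\{(j+1)/3,\,(j+2)/3,\,j/3+1\}$ and lower parameters $\{1/2,\,1\}$, evaluated at $27x^{2}/(4c^{3})$. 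Using $c^{3}=\sigma^{2}t^{3}(-\alpha'_{k-j})^{3}/2$, this argument collapses to $27x^{2}/(2\sigma^{2}t^{3}(-\alpha'_{k-j})^{3})$, while the accumulated prefactor --- after the combination $(\sigma^{2}/2)^{j/3}/(2\sigma^{4})^{2j/3}=(2\sigma^{2})^{-j}$ and the identity $\sqrt{3}/3^{j}=1/3^{j-1/2}$ arising from the triplication/duplication constants --- reduces exactly to the shape $(\mu^{2}t(-\alpha'_{k-j})/(2\sigma^{2}))^{j}/[3^{j-1/2}(-\alpha'_{k-j})\Ai(\alpha'_{k-j})(2j)!\sqrt{\pi}x]$ claimed in (\ref{3.2}).

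The main obstacle I anticipate is the twofold interchange of summation with integration: first the Bromwich inversion with the double sum from Theorem \ref{thm:1}, and then the Laplace inversion with the Mellin--Barnes integral. Both rest on the rapid growth of $-\alpha'_{k}$, the exponential decay of $|\Gf(3s/2)|$ on vertical contours, and a uniform-in-$j$ bound on $\mathcal{I}_{k-j,j}$ inherited from the closed form (\ref{2.8}). Once these dominations are secured, the algebraic reduction to the Meijer $\G$-form is routine --- if somewhat laborious --- bookkeeping through the gamma multiplication identities.
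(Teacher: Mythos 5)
Your proposal is correct and follows the same overall strategy as the paper: termwise Bromwich inversion of the double series (\ref{2.7}), reduction to the single kernel $u^{-2j/3}e^{-cu^{2/3}}$ with $c=(\sigma^{2}/2)^{1/3}t(-\alpha'_{k-j})$, identification of its inverse transform as a Meijer G-function, and bookkeeping of the $(k,j)$-dependent constants, which you carry out accurately (the reductions $(\sigma^{2}/2)^{j/3}/(2\sigma^{4})^{2j/3}=(2\sigma^{2})^{-j}$ and $27x^{2}/(4c^{3})=27x^{2}/(2\sigma^{2}t^{3}(-\alpha'_{k-j})^{3})$ both check out against (\ref{3.2})). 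The one genuine difference is how the key identity (\ref{3.4}) is established: the paper simply cites the tabulated inverse Laplace transform \cite[Prudnikov et al., 1992, Eq. 2.2.1.19]{PBMb}, which produces a $\G^{3,0}_{2,3}$ of argument $4/(27x^{2})$, and then flips it into the $\G^{0,3}_{3,2}$ form via the G-function identities in \cite{EMOT}; you instead rederive the identity from scratch through the Cahen--Mellin representation of $e^{-y}$, termwise inversion of $u^{-\alpha}$, and the Gauss triplication and Legendre duplication formulae, arriving at the Mellin--Barnes integral defining $\G^{0,3}_{3,2}$ directly. Your route is self-contained and makes the origin of the parameters $\{(j+1)/3,(j+2)/3,j/3+1;1/2,1\}$ and of the constant $3^{-(j-1/2)}$ transparent, at the cost of having to justify one additional interchange (Laplace inversion against the Mellin--Barnes contour integral), which your appeal to the decay of $\Gf(3s/2)$ on vertical lines handles; the paper's table-lookup is shorter but opaque. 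Both treat the outer interchange (inversion against the absolutely convergent double sum) at the same, somewhat informal, level of rigor.
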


\begin{proof}
Based on Theorem \ref{thm:1}, inverting (\ref{2.3}) comes down to inverting $u^{-2j/3}e^{-u^{2/3}}$ in $u$ for $j\geq0$. Since the power of the exponent is rational we can consult the general formula in \cite[Prudnikov et al., 1992, \text{Eq.} 2.2.1.19]{PBMb} to obtain that, for arbitrary $c>0$,
\begin{equation}\label{3.3}
  \frac{1}{2\pi\ii}\int^{c+\ii\infty}_{c-\ii\infty}e^{ux-u^{2/3}}u^{-2j/3}\dd u=\sqrt{\frac{3}{\pi}}\frac{1}{x}\bigg(\frac{x}{2}\bigg)^{2j/3}\G^{3,0}_{2,3}\bigg(
  \begin{array}{ccccc}
     &  &  & j/3 & j/3+1/2 \\
    0 & 1/3 & 2/3 &  &
  \end{array}
  \bigg|\frac{4}{27x^{2}}\bigg),\quad x>0.
\end{equation}
This expression can be shortened by using some fundamental identities involving the G-function (in particular, see \cite[Erd\'{e}lyi et al., 1953, \text{Eqs.} 5.3.1.(8) and 5.3.1.(9)]{EMOT}); we then have
\begin{equation}\label{3.4}
  \frac{1}{2\pi\ii}\int^{c+\ii\infty}_{c-\ii\infty}e^{ux-u^{2/3}}u^{-2j/3}\dd u=\frac{1}{3^{j-1/2}\sqrt{\pi}x}\G^{0,3}_{3,2}\bigg(
  \begin{array}{ccccc}
    (j+1)/3 & (j+2)/3 & j/3+1 &  &  \\
     &  &  & 1/2 & 1
  \end{array}
  \bigg|\frac{27x^{2}}{4}\bigg).
\end{equation}
Using (\ref{3.4}) along with the scaling property of the Laplace transform, the density function of $X_{t}$ can be expressed as
\begin{align}\label{3.5}
  f_{X_{t}}(x)&=\frac{e^{-\mu^{2}t/(2\sigma^{2})}}{\sqrt{\pi}x}\sum^{\infty}_{k=1}\sum^{k-1}_{j=0} \bigg(\frac{\mu^{2}t(-\alpha'_{k-j})}{2\sigma^{2}}\bigg)^{j} \frac{\mathcal{I}_{k-j,j}}{3^{j-1/2}(-\alpha'_{k-j})\Ai(\alpha'_{k-j})(2j)!} \nonumber\\
  &\quad\times\G^{0,3}_{3,2}\bigg(
  \begin{array}{ccccc}
    (j+1)/3 & (j+2)/3 & j/3+1 &  &  \\
     &  &  & 1/2 & 1
  \end{array}
  \bigg|\frac{27x^{2}}{2\sigma^{2}t^{3}(-\alpha'_{k-j})^{3}}\bigg),\quad x>0,
\end{align}
after simplifications, where $\mathcal{I}_{k,j}$ is given by (\ref{2.6}). Using (\ref{2.8}) to explicitly rewrite $\mathcal{I}_{k,j}$ in (\ref{3.5}) delivers (\ref{3.2}).
\end{proof}

In the limit as $\mu\rightarrow0$, with the second series in (\ref{3.2}) reduced to a single term at $j=0$ and the specialization that (see (\ref{3.3}) and \cite[Erd\'{e}lyi et al., 1953, \text{Eqs.} 5.3.1.(7) and 5.6.(5)]{EMOT})
\begin{align*}
  &\quad\G^{0,3}_{3,2}\bigg(
  \begin{array}{ccccc}
    1/3 & 2/3 & 1 &  &  \\
     &  &  & 1/2 & 1
  \end{array}
  \bigg|\frac{27x^{2}}{2\sigma^{2}t^{3}(-\alpha'_{k})^{3}}\bigg) \\
  &=\G^{0,2}_{2,1}\bigg(
  \begin{array}{cccc}
    1/3 & 2/3 &  &  \\
     &  &  & 1/2
  \end{array}
  \bigg|\frac{27x^{2}}{2\sigma^{2}t^{3}(-\alpha'_{k})^{3}}\bigg) \\
  &=\frac{\sigma^{4/3}t(-\alpha'_{k})}{9\times2^{2/3}\sqrt{\pi}x^{4/3}}\bigg(\Gf\bigg(\frac{1}{3}\bigg)t(-\alpha'_{k}) \;_{1}\F_{1}\bigg(\frac{7}{6};\frac{4}{3}\bigg|\frac{2\sigma^{2}t^{3}(\alpha'_{k})^{3}}{27x^{2}}\bigg) -2^{1/3}\Gf\bigg(-\frac{1}{3}\bigg)\bigg(\frac{x}{\sigma}\bigg)^{2/3} \;_{1}\F_{1}\bigg(\frac{5}{6};\frac{2}{3}\bigg|\frac{2\sigma^{2}t^{3}(\alpha'_{k})^{3}}{27x^{2}}\bigg)\bigg)
\end{align*}
in terms of Kummer's confluent hypergeometric function $\;_{1}\F_{1}\equiv\;_{1}\F_{1}(\cdot;\cdot|\cdot)$,\footnote{Specifically, this function is used in this context for notational consistency with other hypergeometric functions, in addition to computational advantages.} we have the following reduced formula after rearrangement:
\begin{align}\label{3.6}
  f_{X_{t}}(x;\mu=0)&=\frac{\sigma^{4/3}t}{\sqrt{3}\times2^{2/3}\pi x^{7/3}}\sum^{\infty}_{k=1}\frac{1}{\Ai(\alpha'_{k})} \bigg(\frac{1}{3}+\frac{(-\alpha'_{k})}{3^{1/3}} \bigg(\frac{1}{3^{1/3}\Gf(2/3)}{\;_{1}\F_{2}}\bigg(\frac{1}{3};\frac{2}{3},\frac{4}{3}\bigg|\frac{(\alpha'_{k})^{3}}{9}\bigg) \nonumber\\
  &\qquad+\frac{(-\alpha'_{k})}{2\Gf(2/3)}{\;_{1}\F_{2}}\bigg(\frac{2}{3};\frac{4}{3},\frac{5}{3}\bigg|\frac{(\alpha'_{k})^{3}}{9}\bigg)\bigg) \bigg(\Gf\bigg(\frac{4}{3}\bigg)t(-\alpha'_{k})\;_{1}\F_{1}\bigg(\frac{7}{6};\frac{4}{3}\bigg|\frac{2\sigma^{2}t^{3}(\alpha'_{k})^{3}}{27x^{2}}\bigg) \nonumber\\
  &\qquad+2^{1/3}\Gf\bigg(\frac{2}{3}\bigg)\bigg(\frac{x}{\sigma}\bigg)^{2/3} \;_{1}\F_{1}\bigg(\frac{5}{6};\frac{2}{3}\bigg|\frac{2\sigma^{2}t^{3}(\alpha'_{k})^{3}}{27x^{2}}\bigg)\bigg),\quad x>0.
\end{align}
If one uses instead Tricomi's confluent hypergeometric function which is a modified form of $\;_{1}\F_{1}$ in (\ref{3.6}), this becomes a scaled version of the original formula obtained in \cite[Tak\'{a}cs, 1993, \text{Thm.} 1]{T1}. The same result can be deduced by directly inverting (\ref{2.9}).

In a similar fashion we can obtain the next result for the cumulative distribution function,
\begin{equation}\label{3.7}
  F_{X_{t}}(x):=\PP\{X_{t}\leq x\}=\int^{x}_{0}f_{X_{t}}(y)\dd y,\quad x>0.
\end{equation}

\begin{theorem}\label{thm:3}
Assume $\mu\neq0$. The cumulative distribution function (\ref{3.7}) of $X_{t}$ is given by
\begin{align}\label{3.8}
  F_{X_{t}}(x)&=\frac{\sigma\sqrt{t^{3}}e^{-\mu^{2}t/(2\sigma^{2})}}{\sqrt{2\pi}x}\sum^{\infty}_{k=1}\sum^{k-1}_{j=0} \bigg(\frac{\mu^{2}t(-\alpha'_{k-j})}{2\sigma^{2}}\bigg)^{j} \frac{\sqrt{-\alpha'_{k-j}}}{3^{j+1}\Ai(\alpha'_{k-j})(2j)!}\sum^{2j}_{i=0}\binom{2j}{i}(-\alpha'_{k-j})^{2j-i} \nonumber\\
  &\qquad\times\bigg(\frac{\Gf(i)}{3^{i/3}\Gf(i/3)}+\frac{(-1)^{i}(-\alpha'_{k-j})^{i+1}}{3^{1/3}} \bigg(\frac{1}{3^{1/3}\Gf(2/3)(i+1)}{\;_{1}\F_{2}}\bigg(\frac{i+1}{3};\frac{2}{3},\frac{i+4}{3}\bigg|\frac{(\alpha'_{k-j})^{3}}{9}\bigg) \nonumber\\
  &\qquad+\frac{(-\alpha'_{k-j})}{\Gf(1/3)(i+2)} {\;_{1}\F_{2}}\bigg(\frac{i+2}{3};\frac{4}{3},\frac{i+5}{3}\bigg|\frac{(\alpha'_{k-j})^{3}}{9}\bigg)\bigg) \nonumber\\
  &\quad\times\G^{0,3}_{3,2}\bigg(
  \begin{array}{ccccc}
    j/3+5/6 & j/3+7/6 & j/3+3/2 &  &  \\
     &  &  & 1/2 & 1
  \end{array}
  \bigg|\frac{27x^{2}}{2\sigma^{2}t^{3}(-\alpha'_{k-j})^{3}}\bigg),\quad x>0.
\end{align}
\end{theorem}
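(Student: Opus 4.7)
The plan is to mirror the argument used in the proof of Theorem~\ref{thm:2}, swapping density inversion for CDF inversion. Since $F_{X_{t}}(0)=0$ and $F_{X_{t}}$ is the primitive of $f_{X_{t}}$, its Laplace transform in the space variable $x$ equals $\bar{f}_{X_{t}}(u)/u$. Invoking Theorem~\ref{thm:1}, it is therefore enough to invert, term by term, the single building block $u^{-2j/3-1}e^{-u^{2/3}}$ for each $j\in\mathds{N}$, rather than $u^{-2j/3}e^{-u^{2/3}}$ as in (\ref{3.3}).

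First I would apply \cite[Prudnikov et al., 1992, Eq.~2.2.1.19]{PBMb} with the shifted exponent $2j/3+1$ in place of $2j/3$. This produces a $\G^{3,0}_{2,3}$ expression whose parameter vectors differ from those appearing in (\ref{3.3}) only by a uniform shift of $1/2$ coming from the extra factor $u^{-1}$. The same pair of identities \cite[Erd\'{e}lyi et al., 1953, Eqs.~5.3.1.(8) and 5.3.1.(9)]{EMOT} that carried (\ref{3.3}) to (\ref{3.4}) will then collapse this into a $\G^{0,3}_{3,2}$ whose upper parameters become $(j+1)/3+1/2$, $(j+2)/3+1/2$ and $j/3+1+1/2$, which simplify to $j/3+5/6$, $j/3+7/6$ and $j/3+3/2$ -- exactly the upper list displayed in (\ref{3.8}).

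Plugging the resulting inverse back into (\ref{2.7}) with $\mathcal{I}_{k-j,j}$ rewritten via (\ref{2.8}), and applying the same scaling property of the Laplace transform that produced the argument $27x^{2}/(2\sigma^{2}t^{3}(-\alpha'_{k-j})^{3})$ in (\ref{3.5}), should reproduce the announced formula up to an overall scalar prefactor. A direct computation shows that the ratio of the CDF coefficient to the density coefficient equals $\sigma\sqrt{t^{3}}(-\alpha'_{k-j})^{3/2}/(3\sqrt{6})$, which accounts precisely for the $\sqrt{-\alpha'_{k-j}}/3^{j+1}$ and $\sigma\sqrt{t^{3}}/\sqrt{2\pi}$ factors appearing in (\ref{3.8}).

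The main obstacle I anticipate is the arithmetic bookkeeping of these constants through the G-function simplification -- in particular, verifying that the $1/2$-shift yields the clean prefactor $1/3^{j+1}$ in (\ref{3.8}) in place of the $1/(3^{j-1/2}\sqrt{\pi})$ seen in (\ref{3.2}). As an independent check, I would re-derive (\ref{3.8}) by integrating (\ref{3.2}) directly from $0$ to $x$, using the standard Mellin--Barnes integration formula for Meijer's G-function, which adds $1/2$ to each upper parameter whenever the integrand is divided by $y$ and the argument scales like $y^{2}$; the two derivations must agree.
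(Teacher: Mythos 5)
Your proposal is correct and follows exactly the route the paper takes: the paper's proof consists of noting $\bar{F}_{X_t}(u)=\bar{f}_{X_t}(u)/u$ and then replacing $j$ by $j+3/2$ in (\ref{3.4}), which is precisely your uniform $1/2$-shift of the upper G-function parameters. Your constant-tracking also checks out -- the ratio of coefficients is indeed $\sigma\sqrt{t^{3}}(-\alpha'_{k-j})^{3/2}/(3\sqrt{6})$, consistent with $3^{j-1/2}\mapsto 3^{j+1}$ and the extra scaling factor $(\sigma^{2}t^{3}(-\alpha'_{k-j})^{3}/2)^{1/2}$ from the Laplace scaling property.
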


\begin{proof}
Using the integration property of the Laplace transform that
\begin{equation*}
  \bar{F}_{X_{t}}(u):=\int^{\infty}_{0}e^{-ux}F_{X_{t}}(x)\dd x=\frac{\bar{f}_{X_{t}}(u)}{u},\quad\Re u>0,
\end{equation*}
we are led to invert $u^{-2j/3-1}e^{-u^{2/3}}$ in light of (\ref{2.3}). Hence, following the proof of Theorem \ref{thm:2}, we only need to replace $j$ with $j+3/2$ in (\ref{3.4}) to obtain the result after arranging terms.
\end{proof}

Similarly, upon sending $\mu\rightarrow0$ in (\ref{3.8}) we immediately have the reduced formula
\begin{align}\label{3.9}
  F_{X_{t}}(x;\mu=0)&=\frac{\sigma\sqrt{t^{3}}}{3\sqrt{2\pi}x}\sum^{\infty}_{k=1}\frac{\sqrt{-\alpha'_{k}}}{\Ai(\alpha'_{k})} \bigg(\frac{1}{3}+\frac{(-\alpha'_{k})}{3^{1/3}} \bigg(\frac{1}{3^{1/3}\Gf(2/3)}{\;_{1}\F_{2}}\bigg(\frac{1}{3};\frac{2}{3},\frac{4}{3}\bigg|\frac{(\alpha'_{k})^{3}}{9}\bigg) \nonumber\\
  &\quad\;+\frac{(-\alpha'_{k})}{2\Gf(1/3)}{\;_{1}\F_{2}}\bigg(\frac{2}{3};\frac{4}{3},\frac{5}{3}\bigg|\frac{(\alpha'_{k})^{3}}{9}\bigg)\bigg) \G^{0,3}_{3,2}\bigg(
  \begin{array}{ccccc}
    5/6 & 7/6 & 3/2 &  &  \\
     &  &  & 1/2 & 1
  \end{array}
  \bigg|\frac{27x^{2}}{2\sigma^{2}t^{3}(-\alpha'_{k})^{3}}\bigg),\quad x>0.
\end{align}
Here the G-function cannot be simplified to more elementary functions due to irreducible dimensionality, which is most likely why no similar result was given in \cite[Tak\'{a}cs, 1993]{T1}.

As a remark, pertaining to the Khmaladze transformation-based goodness-of-fit tests mentioned in Section \ref{sec:1}, the new formula (\ref{3.9}), taken with $\sigma=t=1$, determines the exact null distribution of the test statistic based on the $\mathrm{L}^{1}$-norm.

\vspace{0.2in}

\section{Moments}\label{sec:4}

Let the moments of $X_{t}$ be denoted by
\begin{equation}\label{4.1}
  M_{X_{t}}(n):=\E X^{n}_{t},\quad n\in\mathds{N},
\end{equation}
all of which are positive. Their existence is established in the next lemma.

\begin{lemma}\label{lem:1}
We have that $M_{X_{t}}(n)<\infty$, $\forall n\in\mathds{N}$.
\end{lemma}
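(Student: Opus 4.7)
The plan is to bound $X_{t}$ pathwise by a quantity whose $n$-th moment is manifestly finite, relying only on the classical absolute moments of a Gaussian. First, I would apply Jensen's inequality (to the convex map $x\mapsto x^{n}$ and the uniform probability measure on $[0,t]$) to obtain
\begin{equation*}
  X^{n}_{t}=t^{n}\bigg(\frac{1}{t}\int^{t}_{0}|\tilde{W}_{s}|\dd s\bigg)^{n}\leq t^{n-1}\int^{t}_{0}|\tilde{W}_{s}|^{n}\dd s.
\end{equation*}
Taking expectations and interchanging expectation with the time integral via the Fubini--Tonelli theorem (permissible by nonnegativity of the integrand) yields
\begin{equation*}
  M_{X_{t}}(n)\leq t^{n-1}\int^{t}_{0}\E|\mu s+\sigma W_{s}|^{n}\dd s.
\end{equation*}

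Next, since $\mu s+\sigma W_{s}\sim\mathcal{N}(\mu s,\sigma^{2}s)$, the triangle inequality gives $|\mu s+\sigma W_{s}|^{n}\leq 2^{n-1}(|\mu|^{n}s^{n}+\sigma^{n}|W_{s}|^{n})$, while the classical formula $\E|W_{s}|^{n}=(2s)^{n/2}\Gf((n+1)/2)/\sqrt{\pi}$ reveals $s\mapsto\E|\mu s+\sigma W_{s}|^{n}$ as a finite linear combination of integer and half-integer powers of $s$, hence continuous and integrable on $[0,t]$. Assembling the bounds yields $M_{X_{t}}(n)<\infty$ for every $n\in\mathds{N}$.

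The argument is essentially mechanical and presents no real obstacle, which is exactly what one should expect at this stage: finiteness of all moments is a much weaker statement than the exact recursive formulae that are to be derived after the lemma. The only minor subtlety is the Fubini--Tonelli interchange, which is immediate by nonnegativity, so no appeal to the series representations of Sections \ref{sec:2}--\ref{sec:3} is required.
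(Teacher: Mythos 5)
Your proof is correct, and it takes a genuinely different route from the paper's. The paper decomposes the path integral according to whether $|\tilde{W}_{s}|\leq1$ or $|\tilde{W}_{s}|>1$, applies the $C^{p}$-inequality, bounds the first piece crudely by $t^{n}$ and the second by $\big(\int^{t}_{0}\tilde{W}^{2}_{s}\,\dd s\big)^{n}$, and then invokes an external result (the finiteness of all moments of the square integral of a drifted Brownian motion from Xia, 2020) to conclude. Your argument instead bounds $X^{n}_{t}$ directly via Jensen's inequality against the normalized Lebesgue measure on $[0,t]$, reduces everything to $\int^{t}_{0}\E|\mu s+\sigma W_{s}|^{n}\,\dd s$ by Tonelli, and finishes with the explicit Gaussian absolute-moment formula. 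Your route is more elementary and entirely self-contained, which is arguably preferable for a statement this basic; the paper's route has the mild advantage of making the comparison with the square functional explicit (a theme the authors return to), and its footnote to (\ref{4.12}) leans on that comparison to assert a positive radius of convergence for the moment generating series, something your pointwise bound also delivers (indeed your estimate gives $M_{X_{t}}(n)\leq t^{n-1}\int^{t}_{0}\E|\tilde{W}_{s}|^{n}\,\dd s=O(C^{n}n!^{1/2})$, which suffices). The only cosmetic remark is that the Jensen step uses convexity of $x\mapsto x^{n}$ and so formally requires $n\geq1$; the case $n=0$ is trivial and worth a half-sentence.
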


\begin{proof}
Let $A_{s}=\{\omega\in\Omega:|\tilde{W}_{s}|\leq1\}$ for $s\in[0,t]$, recalling (\ref{1.1}). An application of the $C^{p}$-inequality gives
\begin{align*}
  M_{X_{t}}(n)&=\E\bigg(\int^{t}_{0}|\tilde{W}_{s}|\mathds{1}_{A_{s}}\dd s+\int^{t}_{0}|\tilde{W}_{s}|\mathds{1}_{A^{\complement}_{s}}\dd s\bigg)^{n} \\
  &\leq2^{(n-1)^{+}}\bigg(\E\bigg(\int^{t}_{0}|\tilde{W}_{s}|\mathds{1}_{A_{s}}\dd s\bigg)^{n}+\E\bigg(\int^{t}_{0}|\tilde{W}_{s}|\mathds{1}_{A^{\complement}_{s}}\dd s\bigg)^{n}\bigg) \\
  &\leq2^{(n-1)^{+}}\bigg(t^{n}+\E\bigg(\int^{t}_{0}(\tilde{W}_{s})^{2}\dd s\bigg)^{n}\bigg) \\
  &<\infty,
\end{align*}
where $(\cdot)^{+}$ means the positive part and the last inequality follows from the finiteness of the expectation (i.e., the moments of the square integral of Brownian motions with drft); see, e.g., \cite[Xia, 2020, \text{Thm.} 3]{X}.
\end{proof}

Beside the zeroth moment obviously equal to 1, the first moment (mean) can be easily computed by applying the Fubini theorem. Since $\tilde{W}_{s}$ is for every fixed $s$ normally distributed with mean $\mu s$ and variance $\sigma^{2}s$, we obtain in particular
\begin{equation}\label{4.2}
  M_{X_{t}}(1)\equiv\E X_{t}=\int^{t}_{0}\E|\tilde{W}_{s}|\dd s=\frac{\sigma\sqrt{t}(\mu^{2}t-\sigma^{2})}{\sqrt{2\pi}\mu^{2}}e^{-\mu^{2}t/(2\sigma^{2})} +\frac{\mu^{4}t^{2}+\sigma^{4}}{2\mu^{3}}\erf\frac{\mu\sqrt{t}}{\sqrt{2}\sigma},
\end{equation}
where $\erf\equiv\erf(\cdot)$ stands for Gauss' error function. However, this approach quickly fails for moments of higher orders when multiple integration has to be carried out -- in fact, note that with the second moment no closed-form expression exists for the expectation $\E|\tilde{W}_{s}\tilde{W}_{s'}|$, $s,s'\in[0,t]$.

In an attempt to derive an alternative series representation for the Laplace transform $\bar{f}_{X_{t}}$ to (\ref{2.3}), in the next theorem we are able to provide a general formula that allows exact computations of the moments of arbitrary orders.

\begin{theorem}\label{thm:4}
Assume $\mu\neq0$. The moments (\ref{4.1}) of $X_{t}$ are given by
\begin{equation}\label{4.3}
  M_{X_{t}}(n)=\frac{(-1)^{n}\sigma^{n}t^{3n/2}n!e^{-\mu^{2}t/(2\sigma^{2})}}{\sqrt{2^{n}}}\sum^{n}_{k=0}\tilde{c}^{(1)}_{n-k} \sum^{k}_{l=0}\sum^{l}_{i=0}\binom{l-k-1/2}{l-i}\bigg(\frac{3}{2}\bigg)^{l}c^{(2)}_{k-l}\Sigma\bigg(n,l,i\bigg|\frac{\mu^{2}t}{2\sigma^{2}}\bigg), \quad n\in\mathds{N},
\end{equation}
where the series coefficients satisfy
\begin{align}\label{4.4}
  &c^{(2)}_{k}=\frac{(-1)^{k}\Gf(k+5/6)\Gf(k+1/6)(3/4)^{k}}{2\pi k!},\quad c^{(1)}_{k}=\frac{(6k+1)}{(1-6k)}c^{(2)}_{k},\quad k\in\mathds{N}, \nonumber\\
  &\tilde{c}^{(1)}_{0}=1\rightsquigarrow\tilde{c}^{(1)}_{k}=-\sum^{k}_{m=1}c^{(1)}_{m}\tilde{c}^{(1)}_{k-m},\;\;k\geq1
\end{align}
and for $0\leq i\leq l\leq n$, $\Sigma(n,l,i;\mu^{2}t/(2\sigma^{2}))=S_{i}(1)$ are determined by the differential--difference equation
\begin{align}\label{4.5}
  &S_{0}(p)=\frac{l!}{\Gf(3n/2+1)}\;_{2}\F_{2}\bigg(\frac{l+1}{2},\frac{l}{2}+1;\frac{1}{2},\frac{3n}{2}+1\bigg|\frac{\mu^{2}t}{2\sigma^{2}p}\bigg) \nonumber\\
  \rightsquigarrow&S_{i}(p)=\frac{\Gf(2/3)}{i}\sum^{i}_{m=1}\frac{(m-i)S_{i-m}(p)-2mpS'_{i-m}(p)}{\Gf(2/3-m)(m+1)!},\quad \Re p>0,
\end{align}
with $\;_{2}\F_{2}(\cdot,\cdot;\cdot,\cdot|\cdot)$ being a generalized hypergeometric function.
\end{theorem}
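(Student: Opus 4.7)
The plan is to derive a series expansion of $\bar{f}_{X_{t}}(u)$ that, unlike (\ref{2.3}), is analytic at $u=0$, and then to read off the moments through $M_{X_{t}}(n)=(-1)^{n}n!\,[u^{n}]\bar{f}_{X_{t}}(u)$; finiteness of the Taylor coefficients so obtained is already supplied by Lemma \ref{lem:1}. Starting again from the Cameron--Martin--Girsanov identity (\ref{2.4}) and the inner $z$-integral in (\ref{2.5}), I would substitute the large-argument Poincar\'{e} asymptotic expansions of $\Ai$ and $\Ai'$ into the Airy kernel on the right of (\ref{2.5}) \emph{before} the spectral sum is invoked. The sequence $c^{(2)}_{k}$ in (\ref{4.4}) is (up to sign) the classical coefficient of $z^{-3k/2}$ in the asymptotic series of $\Ai(z)\,e^{(2/3)z^{3/2}}\cdot 2\sqrt{\pi}\,z^{1/4}$, while the relation $c^{(1)}_{k}=\frac{6k+1}{1-6k}c^{(2)}_{k}$ is the textbook ratio of the analogous coefficients for $\Ai'$; because a factor of $1/\Ai'$ enters the kernel, its formal reciprocal yields the convolution recursion for $\tilde{c}^{(1)}_{k}$ via Cauchy's product rule.

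After interchanging sums and integrals (justified by absolute convergence in a right neighborhood of $u=0$), the fractional powers of $u$ must be re-expanded into integer powers through the generalized Newton series $(1+x)^{l-k-1/2}=\sum_{i}\binom{l-k-1/2}{l-i}x^{l-i}$, producing the binomial coefficient of (\ref{4.3}); the $(3/2)^{l}$ then tracks the conversion between the natural Airy variable $\xi=(2/3)z^{3/2}$ and $z^{3/2}$ itself. All remaining integrals in $z$ carry the Gaussian weight $e^{bz-b^{2}t/2}$ against polynomial factors and reduce to truncated-normal moments (hence to Gauss' error function in closed form); I would bundle them into the family $\Sigma(n,l,i|p)=S_{i}(1)$, viewing $p=\mu^{2}t/(2\sigma^{2})$ as a free parameter so that the derivative $S'_{i-m}(p)$ required in (\ref{4.5}) is well defined. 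Computing the base case $S_{0}(p)$ then reduces to expanding $e^{bz}$ as a Taylor series, performing the Gaussian integration term by term, and recognizing the resulting double series as the ${}_{2}\F_{2}$ of (\ref{4.5}); the outer prefactor $(-1)^{n}\sigma^{n}t^{3n/2}n!/\sqrt{2^{n}}$ simply reflects the natural scaling $X_{t}=O(\sigma t^{3/2})$ together with the Taylor normalization.

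The differential--difference equation in (\ref{4.5}) would then be obtained by carrying the Airy ODE $\Ai''(w)=w\Ai(w)$ into the Poincar\'{e} coefficients, where it induces a recursion relating $c^{(2)}_{k}$ to its predecessors through the $\Gf(2/3-m)$ denominators, and then transporting this recursion through the $z$-integration: the $S'$ terms appear because differentiation in $p$ commutes with integration against the Gaussian weight while raising the polynomial degree, matching the degree shift forced by the Airy equation. The principal obstacle I anticipate is the rigorous justification of the term-by-term re-expansion: (\ref{2.3}) is not a Taylor series in $u$ at the origin, and naively exchanging an asymptotic-in-$z$ expansion of $\Ai$ with the convergent sum-in-$k$ over the zeros of $\Ai'$ can in general produce only an asymptotic identity. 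The cleanest route around this is to perform the manipulations on the Airy-\emph{integral} representation of $\E(e^{-vY_{t}};W_{t}\in\dd z)$ underlying (\ref{2.5}) first and use the spectral form only to check the final coefficients; this bookkeeping is where I expect the technical heart of the argument to lie.
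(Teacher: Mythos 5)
Your high-level strategy is the right one --- expand $\bar{f}_{X_{t}}(u)$ around $u=0$ rather than at infinity, identify $c^{(2)}_{k}$ and $c^{(1)}_{k}$ as the Poincar\'{e} coefficients of $\Ai$ and $\Ai'$, obtain $\tilde{c}^{(1)}_{k}$ from the reciprocal series by Cauchy's product rule, and match Taylor coefficients via Lemma \ref{lem:1} --- and you correctly sense that the spectral sum in (\ref{2.5}) cannot be the starting point. But the proposal is missing the one ingredient that makes the whole scheme possible: the \emph{time} Laplace transform. There is no ``Airy-integral representation of $\E(e^{-vY_{t}};W_{t}\in\dd z)$ underlying (\ref{2.5})'' at fixed $t$; the only non-spectral closed form comes from the Kac resolvent, i.e.\ from $\int^{\infty}_{0}e^{-yt}\E(e^{bW_{t}-vY_{t}};W_{t}\in\dd z)\dd t=-e^{bz}\Ai(2^{1/3}v^{-2/3}(y+v|z|))/((2v)^{1/3}\Ai'(2^{1/3}v^{-2/3}y))\dd z$. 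It is only in this marginal space--time transform $\phi(u,y)$ that the argument $2^{1/3}v^{-2/3}y$ becomes large as $v\to0$, so that the Poincar\'{e} expansions apply at all; and the factors $t^{3n/2}$ and $\Gf(j+3n/2+1)$ in (\ref{4.3})--(\ref{4.5}) arise precisely from termwise inversion of the resulting negative half-integer powers of $y$. Without introducing $\phi(u,y)$ and inverting in $y$ at the end, the plan cannot produce the stated formula.

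The second gap is the mechanism for the remaining integrals. They are not Gaussian: after marginalizing $e^{bz}$ against the resolvent kernel one faces exponential-Airy tail integrals $\int^{\infty}_{y}e^{\pm bz/v}\Ai(2^{1/3}v^{-2/3}z)\dd z$, and the required expansion in integer powers of $v$ is extracted by the substitution $z^{3/2}\mapsto z$ followed by repeated integration by parts, Fa\`{a} di Bruno's formula for the derivatives of $e^{\pm\beta z^{2/3}}z^{-1/2-k}$ (partial Bell polynomials), and a regularization of those Bell polynomials to isolate the powers of $y$; the differential--difference equation (\ref{4.5}) is then the Z-transform of the resulting polynomial recurrence, not a consequence of pushing the Airy ODE through a Gaussian integration. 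This also explains why your attributions of the individual factors are off: $\binom{l-k-1/2}{l-i}=\Gf(1/2-k+l)/(\Gf(1/2-k+i)(l-i)!)$ comes from repeated differentiation of $z^{-1/2-k}$, not from a Newton binomial re-expansion of fractional powers of $u$; the $\Gf(2/3-m)$ denominators come from the derivatives of $z^{2/3}$ in the exponent, not from the Airy equation; and the $S'_{i-m}(p)$ terms come from the differentiation property of the Z-transform acting on the index $j$, not from differentiating under a Gaussian integral. As written, the proposal identifies the correct destination and several correct landmarks but not a viable route between them.
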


\begin{proof}
Based on (\ref{2.1}), we rewrite
\begin{equation}\label{4.6}
  M_{X_{t}}(n)=\sigma^{n}e^{-b^{2}t/2}\E\big(e^{bW_{t}}Y^{n}_{t}\big),\quad n\in\mathds{N}
\end{equation}
with $b=\mu/\sigma$, assumed nonzero.

By consulting a known formula for the space--time Laplace transform of the distribution of $Y_{t}$ (see \cite[Tolmatz, 2000, \text{Eq.} (2)]{T1}) stemming from applying the result of \cite[Kac, 1949]{K2}, we have for $v=\sigma u$ with positive real part that
\begin{equation*}
  \int^{\infty}_{0}e^{-yt}\E\big(e^{bW_{t}-vY_{t}};W_{t}\in\dd z\big)\dd t=-\frac{e^{bz}\Ai(2^{1/3}v^{-2/3}(y+v|z|))}{(2v)^{1/3}\Ai'(2^{1/3}v^{-2/3}y)}\dd z,\quad\Re y>0,\;z\in\mathds{R}.
\end{equation*}
Marginalization yields
\begin{align}\label{4.7}
  \phi(u,y):=\int^{\infty}_{0}e^{-(y-b^{2}/2)t}\bar{f}_{X_{t}}(u)\dd t&=-\int_{\mathds{R}}\frac{e^{bz}\Ai(2^{1/3}v^{-2/3}(y+v|z|))}{(2v)^{1/3}\Ai'(2^{1/3}v^{-2/3}y)}\dd z \nonumber\\
  &=-\frac{1}{2^{1/3}v^{4/3}\Ai'(2^{1/3}v^{-2/3}y)}\bigg(e^{-by/v}\int^{\infty}_{y}e^{bz/v}\Ai(2^{1/3}v^{-2/3}z)\dd z \nonumber\\
  &\quad+e^{by/v}\int^{\infty}_{y}e^{-bz/v}\Ai(2^{1/3}v^{-2/3}z)\dd z\bigg),\quad v=\sigma u,\;\Re y>\frac{b^{2}}{2},
\end{align}
where the second equality follows from splitting the integral with respect to $|z|$ and the substitutions $y\pm vz\mapsto z$. From here we write $a=2^{1/3}v^{-2/3}$ with $\Re a>0$ and $\beta=b/v\neq0$.

For the (first) Airy function and its derivative the following asymptotic expansions are well-known (see \cite[Abramowitz and Stegun, 1972, \text{Eqs.} 10.4.59 and 10.4.61]{AS}):
\begin{equation}\label{4.8}
  \Ai(ay)=\frac{e^{-2/3(ay)^{3/2}}}{2\sqrt{\pi}}\sum^{\infty}_{k=0}\frac{c^{(2)}_{k}}{(ay)^{3k/2+1/4}},\quad \Ai'(ay)=-\frac{e^{-2/3(ay)^{3/2}}}{2\sqrt{\pi}}\sum^{\infty}_{k=0}\frac{c^{(1)}_{k}}{(ay)^{3k/2-1/4}},
\end{equation}
where $c^{(1)}_{k}$'s and $c^{(2)}_{k}$'s are as specified in (\ref{4.4}). Plugging the first expansion in (\ref{4.7}) we obtain
\begin{align*}
  A&:=e^{-\beta y}\int^{\infty}_{y}e^{\beta z}\Ai(az)\dd z+e^{\beta y}\int^{\infty}_{y}e^{-\beta z}\Ai(az)\dd z \\
  &=\frac{1}{2\sqrt{\pi}}\sum^{\infty}_{k=0}\frac{c^{(2)}_{k}}{a^{1/4+3k/2}}\bigg(e^{-\beta y}\int^{\infty}_{y}e^{\beta z-2/3(az)^{3/2}}z^{-1/4-3k/2}\dd z+e^{\beta y}\int^{\infty}_{y}e^{-\beta z-2/3(az)^{3/2}}z^{-1/4-3k/2}\dd z\bigg).
\end{align*}
As the integrands are analytic in the right half-plane, the substitution $z^{3/2}\mapsto z$ gives
\begin{align*}
  A=\frac{1}{3\sqrt{\pi}}\sum^{\infty}_{k=0}\frac{c^{(2)}_{k}}{a^{1/4+3k/2}}\bigg(e^{-\beta y}\int^{\infty}_{y^{3/2}}e^{\beta z^{2/3}-2/3a^{3/2}z}z^{-1/2-k}\dd z+e^{\beta y}\int^{\infty}_{y^{3/2}}e^{-\beta z^{2/3}-2/3a^{3/2}z}z^{-1/2-k}\dd z\bigg).
\end{align*}
The asymptotic expansions of the above integrals can be derived via integration-by-parts, i.e.,
\begin{align*}
  e^{-\beta y}\int^{\infty}_{y^{3/2}}e^{\beta z^{2/3}-2/3a^{3/2}z}z^{-1/2-k}\dd z&=-\frac{3}{2}a^{-3/2}\bigg(\big(e^{\beta z^{2/3}-2/3a^{3/2}z}z^{-1/2-k}\big)\big|^{\infty}_{z=y^{3/2}} \\
  &\qquad+\frac{3}{2}a^{-3/2}\int^{\infty}_{y^{3/2}}e^{-2/3a^{3/2}z}\frac{\dd}{\dd z}\big(e^{\beta z^{2/3}}z^{-1/2-k}\big)\dd z=\cdots
\end{align*}
and similarly for the other integral. Repeating indefinitely we obtain
\begin{equation}\label{4.9}
  A=\frac{e^{-2/3(ay)^{3/2}}}{2\sqrt{\pi}}\sum^{\infty}_{k=0}\frac{c^{(2)}_{k}}{a^{7/4+3k/2}}\sum^{\infty}_{j=0}\frac{g^{(j)}_{+,k}(y^{3/2}) +g^{(j)}_{-,k}(y^{3/2})}{(2/3a^{3/2})^{j}},
\end{equation}
where
\begin{equation*}
  g^{(j)}_{+,k}(z)=e^{-\beta z^{2/3}}\frac{\dd^{j}}{\dd z^{j}}(e^{\beta z^{2/3}}z^{-1/2-k})\quad\text{and}\quad g^{(j)}_{-,k}(z)=e^{\beta z^{2/3}}\frac{\dd^{j}}{\dd z^{j}}(e^{-\beta z^{2/3}}z^{-1/2-k}).
\end{equation*}
Then, an application of Fa\`{a} di Bruno's formula allows to express these repeated derivatives in terms of partial Bell polynomials (see, e.g., \cite[Johnson, 2002]{J2}), namely
\begin{align*}
  g^{(j)}_{+,k}(y^{3/2})&=\sum^{j}_{i=0}\binom{j}{i}\frac{\Gf(1/2-k)}{\Gf(1/2-k-(j-i))}y^{-3/4-3(k+(j-i))/2} \\
  &\qquad\times\sum^{i}_{l=0}\beta^{l}\mathbf{B}_{i,l}\bigg(\frac{\Gf(5/3)}{\Gf(5/3-m)}y^{1-3m/2}\bigg|m\in\mathds{N}\cap[1,i-l+1]\bigg),
\end{align*}
where
\begin{equation}\label{4.10}
  \mathbf{B}_{i,l}\bigg(\frac{\Gf(5/3)}{\Gf(5/3-m)}y^{1-3m/2}\bigg|m\in\mathds{N}\cap[1,i-l+1]\bigg)=\sum_{\iota}\frac{i!} {\prod^{i-l+1}_{m=1}\iota_{m}!}\prod^{i-l+1}_{m=1}\frac{(\Gf(5/3)/\Gf(5/3-m)y^{1-3m/2})^{\iota_{m}}}{m!},
\end{equation}
where the sum runs over all such combinations $\{\iota_{m}:m\in\mathds{N}\cap[1,i-l+1]\}$ that $\sum^{i+l-1}_{m=1}\iota_{m}=l$ and $\sum^{i+l-1}_{m=1}m\iota_{m}=i$. The same holds for $g^{(j)}_{-,k}(y^{3/2})$ except that $\beta$ is replaced by $-\beta$.

Straightforward manipulation then gives
\begin{align*}
  &\quad\; g^{(j)}_{+,k}(y^{3/2})+g^{(j)}_{-,k}(y^{3/2}) \\
  &=2\sum^{j}_{i=0}\binom{j}{i}\frac{\Gf(1/2-k)}{\Gf(1/2-k-(j-i))}y^{-3/4-3(k+(j-i))/2} \\
  &\qquad\times\sum^{i}_{l=0}\beta^{l}\mathds{1}_{\text{even}}(l)\mathbf{B}_{i,l} \bigg(\frac{\Gf(5/3)}{\Gf(5/3-m)}y^{1-3m/2}\bigg|m\in\mathds{N}\cap[1,i-l+1]\bigg) \\
  &=2\sum^{j}_{l=0}\beta^{j-l}\mathds{1}_{\text{even}}(j-l)\sum^{l}_{i=0}\binom{j}{i+j-l}\mathbf{B}_{j+i-l,j-l} \bigg(\frac{\Gf(5/3)}{\Gf(5/3-m)}y^{1-3m/2}\bigg|m\in\mathds{N}\cap[1,i+1]\bigg) \\
  &\qquad\times\frac{\Gf(1/2-k)}{\Gf(1/2-k-(l-i))}y^{-3/4-3(k+(l-i))/2}.
\end{align*}
By substituting this into (\ref{4.9}) and changing the order of summation (with respect to $j$ and $l$) we have
\begin{align}\label{4.11}
  A&=\frac{2^{5/12}v^{7/6}e^{-2/3(ay)^{3/2}}}{2\sqrt{\pi}}\sum^{\infty}_{k=0}\frac{c^{(2)}_{k}}{2^{k/2}}\sum^{\infty}_{j=0}\sum^{j}_{l=0} \frac{3^{j}v^{k+l}b^{j-l}\mathds{1}_{\text{even}}(j-l)}{2^{3j/2}} \nonumber\\
  &\quad\times\sum^{l}_{i=0}\binom{j}{i+j-l}\mathbf{B}_{j+i-l,j-l} \bigg(\frac{\Gf(5/3)}{\Gf(5/3-m)}y^{1-3m/2}\bigg|m\in\mathds{N}\cap[1,i+1]\bigg)\frac{\Gf(1/2-k)}{\Gf(1/2-k-(l-i))}y^{-3/4-3(k+(l-i))/2} \nonumber\\
  &=\frac{2^{5/12}v^{7/6}e^{-2/3(ay)^{3/2}}}{2\sqrt{\pi}} \sum^{\infty}_{n=0}v^{n}\sum^{n}_{l=0}\sum^{\infty}_{j=0}\frac{c^{(2)}_{n-l}}{2^{(n-l)/2}}\frac{3^{j+l}b^{j} \mathds{1}_{\text{even}}(j)}{2^{3(j+l)/2}} \nonumber\\
  &\quad\times\sum^{l}_{i=0}\binom{j+l}{j+i}\mathbf{B}_{j+i,j}\bigg(\frac{\Gf(5/3)}{\Gf(5/3-m)}y^{1-3m/2}\bigg|m\in\mathds{N}\cap[1,i+1]\bigg) \frac{\Gf(1/2-n+l)}{\Gf(1/2-n+i)}y^{-3/4-3(n-i)/2}.
\end{align}

Thus, if we divide (\ref{4.11}) by the product of $-2^{1/3}v^{4/3}$ and $\Ai'(ay)$ as in (\ref{4.8}), we obtain an asymptotic expansion of $\phi(u,y)$ in $y$ involving only nonnegative integer powers of $v=\sigma u$. More specifically, by applying Cauchy's product rule and arranging terms we have
\begin{align*}
  \phi(u,y)&=\sum^{\infty}_{n=0}v^{n}\sum^{n}_{k=0}\frac{\tilde{c}^{(1)}_{n-k}}{2^{(n-k)/2}y^{3(n-k)/2}} \sum^{k}_{l=0}\sum^{\infty}_{j=0}\frac{c^{(2)}_{k-l}}{2^{(k-l)/2}}\frac{3^{j+l}b^{j}\mathds{1}_{\text{even}}(j)}{2^{3(j+l)/2}} \\
  &\quad\times\sum^{l}_{i=0}\binom{j+l}{j+i}\mathbf{B}_{j+i,j}\bigg(\frac{\Gf(5/3)}{\Gf(5/3-m)}y^{1-3m/2}\bigg|m\in\mathds{N}\cap[1,i+1]\bigg) \\
  &\qquad\times\frac{\Gf(1/2-k+l)}{\Gf(1/2-k+i)}y^{-1-3(k-i)/2}.
\end{align*}
With $\Ai'$ in the denominator, it is familiar (\cite[Gradshteyn and Ryzhik, 2007, \text{Eq.} 0.313]{GR}) that the reciprocal series coefficients $\tilde{c}^{(1)}_{k}$'s satisfy the recurrence relation in (\ref{4.4}).

On the other hand, Lemma \ref{lem:1} implies the canonical expansion\footnote{This expansion has a positive radius of convergence according to the proof of Lemma \ref{lem:1}.}
\begin{equation}\label{4.12}
  \bar{f}_{X_{t}}(u)=\sum^{\infty}_{n=0}\frac{M_{X_{t}}(n)(-u)^{n}}{n!}.
\end{equation}
Since (\ref{4.7}) is the time Laplace transform of $e^{b^{2}t/2}\bar{f}_{X_{t}}(u)$ for fixed $u$, as a result of matching the powers of $v$ with reference to (\ref{4.6}) we must have
\begin{align}\label{4.13}
  \int^{\infty}_{0}e^{-(y-b^{2}/2)t}M_{X_{t}}(n)\dd t&=\frac{(-1)^{n}\sigma^{n}n!}{\sqrt{2^{n}}}\sum^{n}_{k=0}\tilde{c}^{(1)}_{n-k} \sum^{k}_{l=0}\sum^{l}_{i=0}\frac{3^{l}c^{(2)}_{k-l}\Gf(1/2-k+l)}{2^{l}\Gf(1/2-k+i)} \sum^{\infty}_{j=0}\binom{2j+l}{2j+i}\bigg(\frac{9b^{2}}{8}\bigg)^{j} \nonumber\\
  &\quad\;\times\mathbf{B}_{2j+i,2j}\bigg(\frac{\Gf(5/3)}{\Gf(5/3-m)}y^{1-3m/2}\bigg|m\in\mathds{N}\cap[1,i+1]\bigg)y^{-1-3(n-i)/2}
\end{align}
after simplifications. Observe from (\ref{4.10}) that the right side of (\ref{4.13}) only has negative half-integer powers of $y$.

To extract the powers of $y$ contained in the Bell polynomials, we employ a regularization argument (with infinitesimal $\epsilon>0$) that for $i,j\in\mathds{N}$,
\begin{align}\label{4.14}
  \mathbf{B}_{2j+i,2j}\bigg(\frac{\Gf(5/3)}{\Gf(5/3-m)}y^{1-3m/2}\bigg|m\in\mathds{N}\cap[1,i+1]\bigg) &=\binom{2j+i}{2j}\frac{\dd^{i}}{\dd\epsilon^{i}}\Bigg(\sum^{\infty}_{m=1}\frac{\Gf(5/3)y^{1-3m/2}\epsilon^{m-1}}{\Gf(5/3-m)m!}\Bigg)^{2j} \Bigg|_{\epsilon\rightarrow0} \nonumber\\
  &=\binom{2j+i}{2j}\bigg(\frac{4}{9}\bigg)^{j}i!P_{i}(j)y^{-j-3i/2}.
\end{align}
With an infinite radius of convergence, the above power series has induced ($y$-independent) polynomials of degree $i$ that familiarly satisfy the recurrence relation (\cite[Gradshteyn and Ryzhik, 2007, \text{Eq.} 0.314]{GR})
\begin{equation}\label{4.15}
  P_{0}(j)=1\rightsquigarrow P_{i}(j)=\frac{\Gf(2/3)}{i}\sum^{i}_{m=1}\frac{2jm+m-i}{\Gf(2/3-m)(m+1)!}P_{i-m}(j),\quad i\geq1,
\end{equation}
with all rational coefficients.

Plugging (\ref{4.14}) and (\ref{4.15}) back into (\ref{4.13}) and simplifying we have
\begin{align}\label{4.16}
  \int^{\infty}_{0}e^{-(y-b^{2}/2)t}M_{X_{t}}(n)\dd t&=\frac{(-1)^{n}\sigma^{n}n!}{\sqrt{2^{n}}}\sum^{n}_{k=0}\tilde{c}^{(1)}_{n-k} \sum^{k}_{l=0}\sum^{l}_{i=0}\frac{3^{l}c^{(2)}_{k-l}\Gf(1/2-k+l)}{2^{l}(l-i)!\Gf(1/2-k+i)} \nonumber\\
  &\quad\times\sum^{\infty}_{j=0}\bigg(\frac{b^{2}}{2}\bigg)^{j}\frac{(2j+l)!}{(2j)!}P_{i}(j)y^{-j-1-3n/2}.
\end{align}
Indeed, (\ref{4.16}) only contains negative half-integer powers of $y$. Noting that the infinite series (with respect to $j$) converges absolutely for $\Re y\rightarrow\infty$, by termwise inversion we arrive at the semi-closed formula
\begin{align}\label{4.17}
  M_{X_{t}}(n)&=\frac{(-1)^{n}\sigma^{n}t^{3n/2}n!e^{-\mu^{2}t/(2\sigma^{2})}}{\sqrt{2^{n}}}\sum^{n}_{k=0}\tilde{c}^{(1)}_{n-k} \sum^{k}_{l=0}\sum^{l}_{i=0}\frac{3^{l}c^{(2)}_{k-l}\Gf(1/2-k+l)}{2^{l}\Gf(1/2-k+i)(l-i)!} \nonumber\\
  &\quad\;\times\sum^{\infty}_{j=0}\bigg(\frac{\mu^{2}t}{2\sigma^{2}}\bigg)^{j}\frac{(2j+l)!}{(2j)!\Gf(j+3n/2+1)}P_{i}(j),
\end{align}
where, again, recall that $\mu/\sigma=b$ and the series coefficients are given by (\ref{4.4}) and (\ref{4.15}).

Observe that the $j$-series in (\ref{4.17}) is the special value at $p=1$ of the one-sided Z-transform of $(\mu^{2}t/(2\sigma^{2}))^{j}(2j+l)!/((2j)!\Gf(j+3n/2+1))P_{i}(j)$ as a function of $j\in\mathds{N}$; i.e., we can define
\begin{equation*}
  S_{i}(p)\equiv S_{i}\bigg(p\bigg|\frac{\mu^{2}t}{2\sigma^{2}}\bigg) :=\sum^{\infty}_{j=0}\bigg(\frac{\mu^{2}t}{2\sigma^{2}p}\bigg)^{j}\frac{(2j+l)!}{(2j)!\Gf(j+3n/2+1)}P_{i}(j),\quad\Re p>0.
\end{equation*}
Then, using the differentiation property of the Z-transform, if we apply the Z-transform to the recurrence relation (\ref{4.15}) multiplied by $(\mu^{2}t/(2\sigma^{2}))^{j}(2j+l)!/((2j)!\Gf(j+3n/2+1))$ (independent of $i$), we easily obtain the transformed relation:
\begin{align}\label{4.18}
  &S_{0}(p)=\sum^{\infty}_{j=0}\bigg(\frac{\mu^{2}t}{2\sigma^{2}p}\bigg)^{j}\frac{(2j+l)!}{(2j)!\Gf(j+3n/2+1)} \nonumber\\
  \rightsquigarrow&S_{i}(p)=\frac{\Gf(2/3)}{i}\sum^{i}_{m=1}\frac{(m-i)S_{i-m}(p)-2mpS'_{i-m}(p)}{\Gf(2/3-m)(m+1)!},
\end{align}
where the first line is the same as that of (\ref{4.5}) from standard manipulations with the definition of generalized hypergeometric functions (\cite[Slater, 1966]{S2}) and the derivative $S'_{i}(p)$ in the second line is with respect to $p$. As a differential--difference equation, (\ref{4.5}) is well-posed and has a unique solution simply because it is purely forward in $i\in\mathds{N}$, or is of retarded type.

Therefore, we complete the proof after rewriting (\ref{4.17}) with (\ref{4.5}) and shortening terms.
\end{proof}

Application of Theorem \ref{thm:4} is straightforward: All the series coefficients in (\ref{4.4}) may be computed a priori up to arbitrary orders and their values stored; then, the equation (\ref{4.5}) can be solved explicitly by forward iteration, using the simple fact that continued differentiation only changes the parameters and does not alter the structure (dimensionality) of $\;_{2}\F_{2}$ (\cite[Slater, 1966]{S2}). In this way one can at least obtain (crude) moment formulae in terms of $\;_{2}\F_{2}$.

On another look, $\;_{2}\F_{2}$ in (\ref{4.5}) only has integer or half-integer parameters and so is always reducible to sums of the exponential and error functions. This means that, when fully simplified, all orders of moments should involve $\erf$ as the only non-elementary function.\footnote{Further inspections indicate that even-order moments can be expressed free of non-elementary functions ($\erf$), which we prefer not to expound in this context.} Achieving such a level of simplicity is completely a matter of collecting numerous terms of combined powers of the parameters $\mu$, $\sigma$, and $t$, which task becomes extremely tedious as soon as $n$ exceeds 2 but fortunately can be assigned to computer algebra. In particular, for $n\in\{0,1,2,3,4\}$, Mathematica$^\circledR$ (\cite[Wolfram Research, Inc., 2023]{W1}) calculates that ($\mu\neq0$)
\begin{align}\label{4.19}
  M_{X_{t}}(0)&=1, \nonumber\\
  M_{X_{t}}(1)&=\frac{\sigma\sqrt{t}(\mu^{2}t-\sigma^{2})}{\sqrt{2\pi}\mu^{2}}e^{-\mu^{2}t/(2\sigma^{2})} +\frac{\mu^{4}t^{2}+\sigma^{4}}{2\mu^{3}}\erf\frac{\mu\sqrt{t}}{\sqrt{2}\sigma}, \nonumber\\
  M_{X_{t}}(2)&=\frac{3\mu^{8}t^{4}+4\mu^{6}\sigma^{2}t^{3}+6\mu^{4}\sigma^{4}t^{2}-36\mu^{2}\sigma^{6}t+96\sigma^{8}}{12\mu^{6}} -\frac{\sigma^{6}(\mu^{2}t+8\sigma^{2})}{\mu^{6}}e^{-\mu^{2}t/(2\sigma^{2})}, \nonumber\\
  M_{X_{t}}(3)&=\frac{\sigma\sqrt{t}(5\mu^{10}t^{5}+15\mu^{8}\sigma^{2}t^{4}+10\mu^{6}\sigma^{4}t^{3} -218\mu^{4}\sigma^{6}t^{2}+1070\mu^{2}\sigma^{8}t-14070\sigma^{10})}{20\sqrt{2\pi}\mu^{8}}e^{-\mu^{2}t/(2\sigma^{2})} \nonumber\\
  &\quad+\frac{\mu^{12}t^{6}+4\mu^{10}\sigma^{2}t^{5}+3\mu^{8}\sigma^{4}t^{4}-32\mu^{6}\sigma^{6}t^{3}+240\mu^{4}\sigma^{8}t^{2} -1152\mu^{2}\sigma^{10}t+2814\sigma^{12}}{8\mu^{9}}\erf\frac{\mu\sqrt{t}}{\sqrt{2}\sigma}, \nonumber\\
  M_{X_{t}}(4)&=\frac{1}{48\mu^{12}}(3\mu^{16}t^{8}+24\mu^{14}\sigma^{2}t^{7}+28\mu^{12}\sigma^{4}t^{6}-168\mu^{10}\sigma^{6}t^{5} +2016\mu^{8}\sigma^{8}t^{4}-18816\mu^{6}\sigma^{10}t^{3} \nonumber\\
  &\qquad+130536\mu^{4}\sigma^{12}t^{2}-607824\mu^{2}\sigma^{14}t+1441440\sigma^{16}) \nonumber\\
  &\quad-\frac{21\sigma^{8}(\mu^{8}t^{4}+32\mu^{6}\sigma^{2}t^{3}+432\mu^{4}\sigma^{4}t^{2}+7168\mu^{2}\sigma^{6}t+91520\sigma^{8})}{64\mu^{12}} e^{-\mu^{2}t/(2\sigma^{2})}.
\end{align}
The zeroth and first moments are the same as what direct computation yields (see (\ref{4.2})).

An alternative method to implement Theorem \ref{thm:4}, as is clear from the proof, is to take advantage of the polynomial recurrence relation (\ref{4.15}) instead of invoking (\ref{4.5}). After calculating the polynomials $P_{i}(j)$ up to some satisfactorily large degree $i$, one proceeds to evaluating the $j$-series under multiplication by $(\mu^{2}t/(2\sigma^{2}))^{j}(2j+l)!/((2j)!\Gf(j+3n/2+1))$. Comprehensibly, this method is likely to be more computationally efficient when the value of $\mu^{2}t/(2\sigma^{2})$ is already known. For the same reason, if emphasis is on numerics, the formula (\ref{4.17}) may have a computational advantage over (\ref{4.3}) in that the $j$-series is rapidly converging due to the gamma function in the denominator and hence can be easily truncated for moderate parameter values.

If we send $\mu\rightarrow0$ in (\ref{4.17}), only the term at $j=0$ will remain in the infinite series and since $P_{i}(0)=\mathds{1}_{\{0\}}(i)$ from (\ref{4.15}) the sum with respect to $i$ disappears as well, hence giving us the much simpler formula
\begin{equation}\label{4.20}
  M_{X_{t}}(n;\mu=0)=\frac{(-1)^{n}\sigma^{n}t^{3n/2}n!}{\sqrt{2^{n}}\Gf(3n/2+1)}\sum^{n}_{k=0}\tilde{c}^{(1)}_{n-k} \sum^{k}_{l=0}\bigg(\frac{3}{2}\bigg)^{l}\frac{\Gf(1/2-k+l)}{\Gf(1/2-k)}c^{(2)}_{k-l},\quad n\in\mathds{N},
\end{equation}
which, without regard to scaling, is equivalent to the one provided in \cite[Tak\'{a}cs, 1993, \text{Thm.} 2]{T1} except that the latter used an additional recurrence relation.

Theorem {\ref{thm:4}} has also given rise to an alternative series representation for the Laplace transform.

\begin{corollary}\label{cor:1}
Assuming $\mu\neq0$, the space Laplace transform (\ref{2.2}) has the alternative series representation
\begin{equation}\label{4.21}
  \bar{f}_{X_{t}}(u)=e^{-\mu^{2}t/(2\sigma^{2})}\sum^{\infty}_{n=0}\frac{(\sigma u)^{n}t^{3n/2}}{\sqrt{2^{n}}}\sum^{n}_{k=0}\tilde{c}^{(1)}_{n-k} \sum^{k}_{l=0}\sum^{l}_{i=0}\binom{l-k-1/2}{l-i}\bigg(\frac{3}{2}\bigg)^{l}c^{(2)}_{k-l}\Sigma\bigg(n,l,i;\frac{\mu^{2}t}{2\sigma^{2}}\bigg), \quad\Re u>0,
\end{equation}
where $\Sigma(n,l,i;\mu^{2}t/(2\sigma^{2}))$ is as specified in (\ref{4.5}).
\end{corollary}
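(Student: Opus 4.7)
The plan is to derive \eqref{4.21} as a nearly immediate corollary of Theorem \ref{thm:4} combined with the canonical power series expansion \eqref{4.12}, which by Lemma \ref{lem:1} has a positive radius of convergence around $u=0$. The essential idea is that Theorem \ref{thm:4} already delivers a closed formula for $M_{X_{t}}(n)$, so plugging it into \eqref{4.12} and simplifying should yield the asserted representation after a term-by-term rearrangement.

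More concretely, the first step is to write down \eqref{4.12} and substitute the expression for $M_{X_{t}}(n)$ given by \eqref{4.3} into the general term $M_{X_{t}}(n)(-u)^{n}/n!$. The leading factor $(-1)^{n}n!/\sqrt{2^{n}}$ inside \eqref{4.3} cancels exactly against the $(-u)^{n}/n!$ coming from \eqref{4.12}, leaving the combined factor $(\sigma u)^{n}t^{3n/2}/\sqrt{2^{n}}$, together with the overall exponential prefactor $e^{-\mu^{2}t/(2\sigma^{2})}$ carried from \eqref{4.3}. The remaining sums over $k$, $l$, $i$ (and the implicit internal recursion \eqref{4.5} defining $\Sigma$) are independent of $u$ and can therefore be pulled outside the $u$-dependent factor $(\sigma u)^{n}t^{3n/2}/\sqrt{2^{n}}$ without further manipulation. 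After relabelling, this reproduces \eqref{4.21} verbatim.

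The only point requiring a little care is the region of validity. Equation \eqref{4.12} converges in some neighborhood of $u=0$ by the proof of Lemma \ref{lem:1}, so the substitution argument above gives \eqref{4.21} at least on such a neighborhood. Both $\bar{f}_{X_{t}}(u)$ and the right-hand side of \eqref{4.21} are analytic in $u$ on the open right half-plane $\{\Re u>0\}$ --- the former by the absolute convergence of the defining expectation \eqref{2.2}, the latter because the $u$-dependence is wholly carried by the $n$th powers $(\sigma u)^{n}$, whose coefficients (built from $\tilde{c}^{(1)}_{n-k}$, $c^{(2)}_{k-l}$, and the rapidly decaying $\Sigma(n,l,i;\mu^{2}t/(2\sigma^{2}))$) are fully specified. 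Equality on an open neighborhood of $0$ therefore forces equality throughout $\{\Re u>0\}$ by analytic continuation, and the proof is complete.

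I do not anticipate any serious obstacle: the work has effectively been done in Theorem \ref{thm:4}, whose proof already matched the coefficients of powers of $v=\sigma u$ against the moment expansion in equation \eqref{4.13}. Corollary \ref{cor:1} is simply the statement obtained by reading those matched coefficients in the opposite direction --- reconstructing $\bar{f}_{X_{t}}(u)$ from the $M_{X_{t}}(n)$'s instead of extracting the latter from the former. The mildest subtle point is ensuring the rearrangement of the three nested finite sums (over $k$, $l$, $i$) and the $j$-series hidden inside $\Sigma$ is lawful; but all four sums are absolutely convergent at each fixed $u$ with $\Re u>0$ (the $j$-series through the factorial in the denominator, the $k,l,i$ sums being finite), so Fubini--Tonelli legitimizes any reordering needed to bring \eqref{4.3} into the form \eqref{4.21}.
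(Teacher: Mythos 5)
Your proposal is correct and coincides with the paper's own (one-line) proof: the paper likewise obtains (\ref{4.21}) by substituting (\ref{4.3}) into the expansion (\ref{4.12}), with the factor $(-1)^{n}n!$ cancelling against $(-u)^{n}/n!$ to leave $(\sigma u)^{n}t^{3n/2}/\sqrt{2^{n}}$. Your additional remarks on the radius of convergence and analytic continuation to $\{\Re u>0\}$ are sound and, if anything, more careful than the paper's treatment.
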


\begin{proof}
This follows immediately from substituting (\ref{4.3}) into (\ref{4.12}).
\end{proof}

Unlike (\ref{2.3}), the representation (\ref{4.21}) is expanded for $u$ near the origin and is more suitable for direct implementation (e.g., characteristic function-based estimation). If $\mu=0$, the specialization of (\ref{4.19}) and (\ref{4.21}) is understood as in the same limit as in (\ref{4.20}).

It is also interesting (and useful) to explore the four crucial statistics, namely the mean, variance, skewness ($\Skew$), and excess kurtosis ($\EKurt$), of $X_{t}$, which has been done for the square integral in \cite[Xia, 2020]{X} as well. Computation is straightforward with (\ref{4.19}) and the relations
\begin{align}\label{4.22}
  \Skew X_{t}&=\frac{M_{X_{t}}(3)-3M_{X_{t}}(2)M_{X_{t}}(1)+2M^{3}_{X_{t}}(1)}{\sqrt{\big(M_{X_{t}}(2)-M^{2}_{X_{t}}(1)\big)^{3}}}, \nonumber\\
  \EKurt X_{t}&=\frac{M_{X_{t}}(4)-4M_{X_{t}}(3)M_{X_{t}}(1)+6M_{X_{t}}(2)M^{2}_{X_{t}}(1)-3M^{4}_{X_{t}}(1)}{\big(M_{X_{t}}(2)-M^{2}_{X_{t}}(1)\big)^{2}}-3.
\end{align}
The drift impact on these two statistics is explained in the next corollary.

\begin{corollary}\label{cor:2}
The following three statements hold. \smallskip\\
(i) The skewness and excess kurtosis of $X_{t}$ both attain their global maxima at $\mu=0$, with the respective maximal values\footnote{The numerical values (up to 10 significant digits) are approximately 1.277368533 and 1.776923532, respectively.}
\begin{equation*}
  \max_{\mu\in\mathds{R}}\Skew X_{t}=\frac{8(4480-1257\pi)}{35\sqrt{(27\pi-64)^{3}}}\quad\text{and} \quad\max_{\mu\in\mathds{R}}\EKurt X_{t}=\frac{3(858112\pi-33453\pi^{2}-2293760)}{280(27\pi-64)^{2}}.
\end{equation*}
(ii) The skewness and excess kurtosis of $X_{t}$ both vanish in the limit as $|\mu|\rightarrow\infty$. \smallskip\\
(iii) The skewness of $X_{t}$ goes to its infimum of 0 in the limit as $|\mu|\rightarrow\infty$, but there exists $\mu^{\ast}>0$ (depending on $\sigma$ and $t$) such that the excess kurtosis attains its global minimum\footnote{Approximately $-0.2621350996$.} at $\mu=\pm\mu^{\ast}$.
\end{corollary}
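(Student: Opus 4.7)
The plan is to reduce the corollary to routine computation via Brownian scaling. Since $X_{t}$ has the same distribution as $\sigma t^{3/2}\int^{1}_{0}|\theta s+W_{s}|\dd s$ with $\theta:=\mu\sqrt{t}/\sigma$, both $\Skew X_{t}$ and $\EKurt X_{t}$ are scale-invariant functionals depending only on $\theta$; together with the sign-invariance of $X$ in $\mu$ noted after (\ref{1.2}), they are even continuous functions of $\theta\in\mathds{R}$, so it suffices to study them on $[0,\infty)$.

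For statement (i), I would take $\mu\rightarrow0$ in the four explicit moment formulas (\ref{4.19}) by Taylor-expanding $e^{-\mu^{2}t/(2\sigma^{2})}$ and $\erf(\mu\sqrt{t}/(\sqrt{2}\sigma))$: the negative-power singularities in $1/\mu$ necessarily cancel between the two structural pieces of each $M_{X_{t}}(n)$ (as they must, since the moments are analytic at $\mu=0$ in view of (\ref{4.20})), producing closed-form values of $M_{X_{t}}(n;\mu=0)$ in terms of $\sigma$, $t$, and $\pi$ for $n=1,\ldots,4$. In particular this recovers the variance $(27\pi-64)\sigma^{2}t^{3}/(72\pi)$, and substitution into (\ref{4.22}) followed by algebraic simplification yields exactly the displayed maxima. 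For statement (ii), I would inspect the large-$\mu$ behavior of (\ref{4.19}): the $e^{-\mu^{2}t/(2\sigma^{2})}$ terms decay super-polynomially and $\erf(\mu\sqrt{t}/(\sqrt{2}\sigma))\rightarrow1$, so the leading centered moments of $X_{t}$ match those of the Gaussian $\mu t^{2}/2+\sigma\int^{t}_{0}W_{s}\dd s$ (consistent with the fact that, for large $\mu$, $\tilde{W}$ stays positive on $[0,t]$ with probability $1-O(e^{-c\mu^{2}})$, so the absolute value becomes superfluous). Direct insertion into (\ref{4.22}) then delivers $\Skew X_{t},\EKurt X_{t}\rightarrow0$ as $|\mu|\rightarrow\infty$.

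The remaining content of (i) and all of (iii) require the global extremal structure. For the skewness I would sign-analyze $\partial_{\mu^{2}}\Skew X_{t}$ on $(0,\infty)$ via the closed-form expressions already assembled; verifying that this derivative is strictly negative gives monotone decay from $\Skew X_{t}|_{\mu=0}$ down to the asymptotic value $0$, thereby establishing $\mu=0$ as the global maximizer and $0$ as the infimum attained asymptotically. For the excess kurtosis, monotonicity fails: with $\EKurt X_{t}|_{\mu=0}$ positive, $\lim_{|\mu|\rightarrow\infty}\EKurt X_{t}=0$, and direct evaluation of (\ref{4.19})--(\ref{4.22}) at a representative intermediate $\mu$ delivering a negative value, continuity together with the extreme-value theorem supplies at least one $\mu^{\ast}>0$ at which $\EKurt X_{t}$ attains a global minimum on $[0,\infty)$. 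The main obstacle is establishing \emph{uniqueness} of $\mu^{\ast}$, which reduces to proving that the derivative of the closed-form excess kurtosis has exactly one zero on $(0,\infty)$: this is a tedious sign analysis of a transcendental expression in $\mu$, $\sigma$, $t$, and $\pi$, most cleanly handled via computer-algebraic factorization of the derivative combined with monotonicity of the resulting factors.
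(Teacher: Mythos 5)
Your overall strategy (reduce to the single parameter $\mu\sqrt{t}/\sigma$, evaluate the moment formulae (\ref{4.19}) in the limits $\mu\rightarrow0$ and $|\mu|\rightarrow\infty$, and use continuity plus a negative sample value to locate the minimizer of the excess kurtosis) is the same as the paper's, and your treatment of (ii) and of the existence part of (iii) is fine. But there is a genuine gap in your argument for statement (i). For the skewness you propose to establish global maximality at $\mu=0$ by showing $\pd_{\mu^{2}}\Skew X_{t}<0$ on $(0,\infty)$; this global monotonicity is never verified, and it is exactly the kind of ``tedious sign analysis of a transcendental expression'' that you yourself defer at the end of the proposal, so it cannot be waved through as routine. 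Worse, for the excess kurtosis you explicitly note that monotonicity fails (it dips below zero and returns to $0$), and you then give \emph{no} argument at all that $\mu=0$ is its global maximizer --- you only use the negative intermediate value to get a global minimizer. Ruling out the possibility that $\EKurt X_{t}$ exceeds its value at $\mu=0$ somewhere on $(0,\infty)$ is part of statement (i) and is simply missing. The paper handles both statistics with a cruder but complete device: a second-order Taylor expansion at $\mu=0$ with explicitly signed coefficients, dominating the function on a compact neighborhood $|\mu|\leq2$, combined with a uniform bound (e.g.\ $\Skew X_{1}\leq1$) valid for $|\mu|>2$; this avoids any global derivative analysis. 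You would need either to supply such a two-piece bound or to actually carry out the sign analysis you sketch.

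Two smaller points. First, in (iii) the paper also records that the third central moment is strictly positive for every $\mu$, which is what guarantees the infimum $0$ of the skewness is not attained; your monotone-decay route would deliver this only if the unverified monotonicity actually holds. Second, your concern about \emph{uniqueness} of $\mu^{\ast}$ is unnecessary: the statement only asserts existence of a $\mu^{\ast}>0$ at which the global minimum is attained (with $\pm\mu^{\ast}$ forced by the sign-invariance in $\mu$), and the paper proves exactly that and nothing more. You have shifted effort away from the part of (i) that genuinely needs an argument and toward a refinement that is not claimed.
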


\begin{proof}
From Theorem \ref{thm:4}, and (\ref{4.22}), we can assume $\sigma=t=1$ without loss of generality.

For statement (i), based on the formulae in (\ref{4.19}), using the expansions of the exponential and error functions around $\mu=0$ (\cite[Abramowitz and Stegun, 1972, \text{Eq.} 7.2.4]{AS}), elementary but tedious calculations show that
\begin{equation*}
  \Skew X_{1}=\frac{8(4480-1257\pi)}{35(27\pi-64)^{3/2}}-\frac{36\pi(195328-61515\pi)}{1925(27\pi-64)^{5/2}}\mu^{2}+O(\mu^{4})\quad\text{as }|\mu|\searrow0,
\end{equation*}
on the right side of which the first coefficient is larger than 1 and the second is negative, and
\begin{equation*}
  \Skew X_{1}\leq
  \begin{cases}
    \displaystyle \frac{8(4480-1257\pi)}{35(27\pi-64)^{3/2}}-\frac{36\pi(195328-61515\pi)}{1925(27\pi-64)^{5/2}}\mu^{2}\quad&\text{if }|\mu|\leq2 \\
    \displaystyle 1,\quad&\text{if }|\mu|>2;
  \end{cases}
\end{equation*}
hence it must be that $\max_{\mu\in\mathds{R}}\Skew X_{1}=8(1256\pi-4480)/(35(27\pi-64)^{3/2})$. A similar argument shows the global maximum of $\EKurt X_{1}$.

Statement (ii) results directly from taking the limit as $|\mu|\rightarrow\infty$ in (\ref{4.19}), using the asymptotic expansions of the error function (\cite[Abramowitz and Stegun, 1972, \text{Eq.} 7.2.14]{AS}).

Towards statement (iii), it is not hard to see from (\ref{4.19}) that the third central moment, namely $M_{X_{1}}(3)-3M_{X_{1}}(2)M_{X_{1}}(1)+2M^{3}_{X_{1}}(1)$, is strictly positive for $\mu\in\mathds{R}$, and then the first conclusion (skewness) follows from statement (ii). For the second conclusion, by statement (ii) and the obvious continuity of $\EKurt X_{1}$ as a function of $\mu$, it suffices to find a point where its value is negative: $\EKurt X_{1}\approx-0.2165605388$ at $\mu=2$.
\end{proof}

Corollary \ref{cor:2} implies that despite being unconditionally right-skewed, the distribution of $X_{t}$ may be platykurtic for moderate values of $\mu$, while it tends to normality as $\mu$ goes to infinity. These properties are notably different from the case of the square integral functional, which is always leptokurtic. Figure \ref{fig:1} further visualizes changes in the skewness and excess kurtosis with respect to the drift parameter.

\begin{figure}[H]
  \centering
  \includegraphics[scale=0.5]{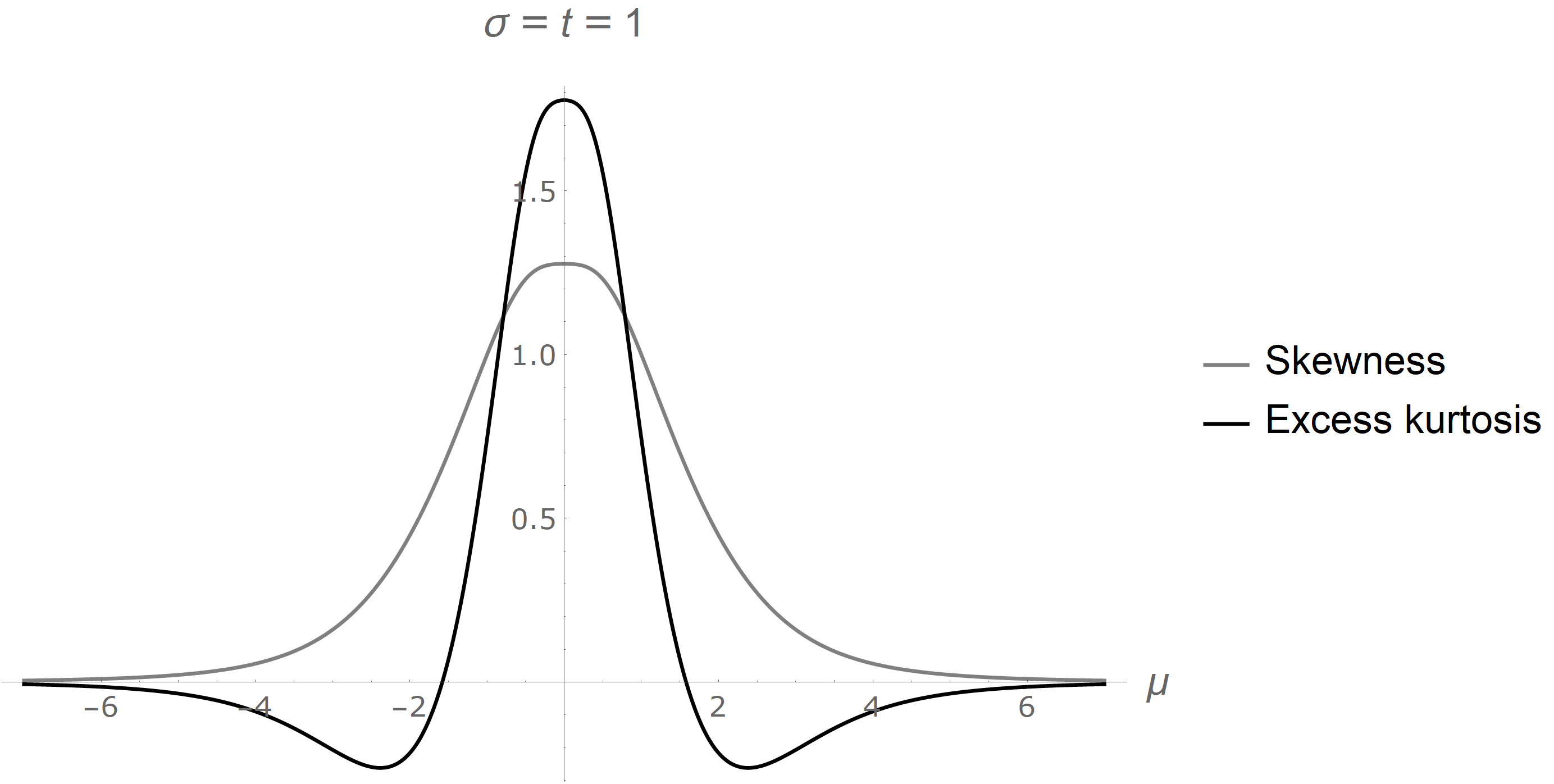}\\
  \caption{Drift impact on skewness and excess kurtosis}
  \label{fig:1}
\end{figure}

\vspace{0.2in}

\section{Asymptotic estimates}\label{sec:5}

In this section we provide asymptotic estimates for the above obtained distribution functions and moments (Theorem \ref{thm:2}, Theorem \ref{thm:3}, and Theorem \ref{thm:4}). We start with the following theorem regarding the small-deviation probabilities of $X_{t}$.

\begin{theorem}\label{thm:5}
The probability density function (\ref{3.1}) and cumulative distribution function (\ref{3.7}) of $X_{t}$ satisfy
\begin{equation}\label{5.1}
  f_{X_{t}}(x)=\frac{\mathcal{I}_{1,0}\sqrt{2\sigma^{2}t^{3}(-\alpha'_{1})}}{3\sqrt{\pi}\Ai(\alpha'_{1})x^{2}} \exp\bigg(-\frac{\mu^{2}t}{2\sigma^{2}}+\frac{2\sigma^{2}(t\alpha'_{1})^{3}}{27x^{2}}\bigg)(1+O(x^{2})),\quad\text{as }x\searrow0,
\end{equation}
and
\begin{equation}\label{5.2}
  F_{X_{t}}(x)=\frac{9\mathcal{I}_{1,0}x}{2\sqrt{2\pi\sigma^{2}t^{3}(-\alpha'_{1})^{5}}\Ai(\alpha'_{1})} \exp\bigg(-\frac{\mu^{2}t}{2\sigma^{2}}+\frac{2\sigma^{2}(t\alpha'_{1})^{3}}{27x^{2}}\bigg)(1+O(x^{2})),\quad\text{as }x\searrow0,
\end{equation}
with the constant\footnote{$\mathcal{I}_{1,0}\approx0.8090732963$.}
\begin{equation}\label{5.3}
  \mathcal{I}_{1,0}=\frac{1}{3}-\frac{\alpha'_{1}}{3^{1/3}} \bigg(\frac{1}{3^{1/3}\Gf(2/3)}{\;_{1}\F_{2}}\bigg(\frac{1}{3};\frac{2}{3},\frac{4}{3}\bigg|\frac{(\alpha'_{1})^{3}}{9}\bigg) +\frac{(-\alpha'_{1})}{2\Gf(1/3)}{\;_{1}\F_{2}}\bigg(\frac{2}{3};\frac{4}{3},\frac{5}{3}\bigg|\frac{(\alpha'_{1})^{3}}{9}\bigg)\bigg).
\end{equation}
\end{theorem}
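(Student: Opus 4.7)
The plan is to invert the Laplace transform series (\ref{2.3}) termwise via the Bromwich contour and apply the steepest-descent (Laplace) method to isolate the dominant contribution. Because the small-$x$ behavior of $f_{X_t}$ is governed by the large-$|u|$ behavior of $\bar{f}_{X_t}(u)$, and each summand of (\ref{2.3}) decays like $\exp(-c_k u^{2/3})$ with $c_k:=(1/2)^{1/3}\sigma^{2/3}t(-\alpha'_k)>0$, the slowest decay -- and hence the leading contribution -- comes from the smallest $|\alpha'_k|$, namely $k=1$. Reindexing $(k,j)\mapsto(k-j,j)$ recasts (\ref{2.3}) as a double series over $k\geq1$, $j\geq0$ whose summand carries $u^{-2j/3}\exp(-c_k u^{2/3})$ times constants involving $\mu^{2j}$, $\mathcal{I}_{k,j}$, and $\Ai(\alpha'_k)$.

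\textbf{Saddle-point inversion.} For $a>0$ and any $\gamma>0$, consider
\begin{equation*}
g_a(x):=\frac{1}{2\pi\ii}\int_{\gamma-\ii\infty}^{\gamma+\ii\infty}e^{ux-au^{2/3}}\dd u.
\end{equation*}
The phase $\phi(u)=ux-au^{2/3}$ has a unique real saddle at $u^\ast=8a^3/(27x^3)$, with $\phi(u^\ast)=-4a^3/(27x^2)$ and $\phi''(u^\ast)=9x^4/(8a^3)>0$; since the steepest-descent direction at $u^\ast$ is vertical, deforming the contour through $u^\ast$ and applying the standard Gaussian approximation gives
\begin{equation*}
g_a(x)=\frac{2a^{3/2}}{3\sqrt{\pi}\,x^2}\exp\!\left(-\frac{4a^3}{27x^2}\right)(1+O(x^2)),\quad x\searrow0,
\end{equation*}
the $O(x^2)$ reflecting the first nonzero correction in the asymptotic expansion (where $\phi^{(4)}(u^\ast)/\phi''(u^\ast)^2\sim x^2$). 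Applying this with $a=c_1$, multiplying by the leading prefactor $\mathcal{I}_{1,0}\,e^{-\mu^2t/(2\sigma^2)}/[(-\alpha'_1)\Ai(\alpha'_1)]$, and simplifying through $c_1^{3/2}=\sigma t^{3/2}(-\alpha'_1)^{3/2}/\sqrt{2}$ together with $-4c_1^3/27=2\sigma^2(t\alpha'_1)^3/27$ yields (\ref{5.1}); the constant (\ref{5.3}) is (\ref{2.8}) evaluated at $(k,j)=(1,0)$.

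\textbf{CDF.} For the cumulative distribution function, the cleanest route is to integrate (\ref{5.1}) from $0$ to $x$. Setting $B:=2\sigma^2t^3(-\alpha'_1)^3/27>0$, the substitution $s=B/y^2$ reduces $\int_0^x y^{-2}e^{-B/y^2}\dd y$ to $(2\sqrt{B})^{-1}\Gf(1/2,B/x^2)$, whose large-argument expansion $\Gf(1/2,w)\sim w^{-1/2}e^{-w}$ extracts the boundary contribution $(x/(2B))e^{-B/x^2}(1+O(x^2))$. Absorbing constants then gives (\ref{5.2}). Alternatively, one may invert $\bar{f}_{X_t}(u)/u$ directly by the same saddle-point method: the extra $-\ln u$ in the phase shifts the saddle only at subleading order and contributes a multiplicative factor $1/u^\ast=27x^3/(8c_1^3)$ to the leading asymptotic.

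\textbf{Principal obstacle.} The technical challenge is rigorously controlling the tail of (\ref{2.3}) after termwise inversion, to secure the uniform $(1+O(x^2))$ bound rather than a merely pointwise $o(1)$. Within the $k=1$ block, the $j\geq1$ terms carry additional $u^{-2j/3}$-factors whose inverses contribute corrections of order $x^{2j}$, absorbable in $O(x^2)$; for $k\geq2$, each term has a strictly larger rate $c_k>c_1$ and contributes $O(\exp(-\delta/x^2))$ for some $\delta>0$. Justifying the interchange of contour integration with the double series uniformly for $x$ near $0$ will rely on the classical large-$k$ asymptotics $\alpha'_k\sim-(3\pi(4k-3)/8)^{2/3}$ and $|\Ai(\alpha'_k)|\sim\pi^{-1/2}|\alpha'_k|^{-1/4}$, together with polynomial-in-$k$ growth bounds on $\mathcal{I}_{k,j}$ inherited from (\ref{2.8}).
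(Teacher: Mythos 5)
Your proposal is correct and arrives at both (\ref{5.1}) and (\ref{5.2}) with the right constants: the saddle-point data ($u^{\ast}=8a^{3}/(27x^{3})$, $\phi(u^{\ast})=-4a^{3}/(27x^{2})$, $\phi''(u^{\ast})=9x^{4}/(8a^{3})$), the resulting factor $2a^{3/2}/(3\sqrt{\pi}x^{2})$, the simplifications via $c_{1}^{3/2}$ and $-4c_{1}^{3}/27$, and the incomplete-gamma reduction for the CDF all check out. The route, however, differs from the paper's in the key technical step. The paper never returns to the Bromwich integral: it starts from the exact series (\ref{3.2}) of Theorem \ref{thm:2}, whose summands are Meijer G-functions, invokes Luke's large-argument asymptotic for the G-function, which via (\ref{3.4}) yields $\G^{0,3}_{3,2}(\cdots|z)=e^{-1/z}z^{j-1/2}(1+O(z))$ as $z\searrow0$, and then argues -- exactly as you do -- that the $k=1$ term dominates because the $-\alpha'_{k}$ are increasing; the CDF is handled by rerunning the same G-function estimate with $j$ shifted to $j+3/2$, rather than by integrating the density. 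Your saddle-point derivation of the same local asymptotic is more self-contained (no appeal to G-function theory), at the price of having to justify the contour deformation and the uniformity of the $O(x^{2})$ error, which you rightly single out as the principal obstacle; the paper is equally terse on the uniform tail control of the double series, so you are not missing anything it actually supplies. Your accounting of the subleading terms -- relative corrections $O(x^{2j})$ from $(u^{\ast})^{-2j/3}$ within the leading block and $O(e^{-\delta/x^{2}})$ from $k\geq2$ -- mirrors the paper's dominance argument. One small indexing remark: in the form (\ref{2.7}) the $k=1$ block contains only $j=0$, so the ``$j\geq1$ terms in the $k=1$ block'' you refer to live in the un-reindexed double sum preceding (\ref{2.7}), which is indeed the cleaner form for this argument.
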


\begin{proof}
The asymptotic estimate of the G-function of the specific form in (\ref{3.3}) for large arguments is known in \cite[Luke, 1969, \text{Sect.} 5.7 \text{Thm.} 5]{L2}. Together with the relation (\ref{3.4}) it implies that, for any $z>0$ and $j\in\mathds{N}$,
\begin{equation}\label{5.4}
  \G^{0,3}_{3,2}\bigg(
  \begin{array}{ccccc}
    (j+1)/3 & (j+2)/3 & j/3+1 &  &  \\
     &  &  & 1/2 & 1
  \end{array}
  \bigg|z\bigg)=e^{-1/z}z^{j-1/2}(1+O(z)),\quad\text{as }z\searrow0.
\end{equation}
Since the numbers $-\alpha'_{k-j}$, $k\geq0$, are all positive and strictly increasing in $k$, this means that the G-function in (\ref{3.2}) decays like $O(e^{2\sigma^{2}t^{3}(\alpha'_{k-j})^{3}/(27x^{2})})$ as $1/x^{2}\rightarrow\infty$, and so the behavior of $f_{X_{t}}(x)$ for small $x$ is dominated by the leading-order term ($k=1$) in (\ref{3.2}). In particular, letting $S$ be the $k$-series in (\ref{3.2}), we have
\begin{equation*}
  S=\frac{\sqrt{3}\mathcal{I}_{1,0}}{(-\alpha'_{1})\Ai(\alpha'_{1})}\sqrt{\frac{2\sigma^{2}t^{3}(-\alpha'_{1})^{3}}{27x^{2}}} \exp\bigg(\frac{2\sigma^{2}t^{3}(\alpha'_{1})^{3}}{27x^{2}}\bigg)(1+O(x^{2})),\quad\text{as }x\searrow0,
\end{equation*}
where $\mathcal{I}_{1,0}$ is recalled to be given by (\ref{2.8}). Putting this back in (\ref{3.2}) and simplifying we obtain (\ref{5.1}). Following the same routine we have (\ref{5.2}), except that the G-function has a different set of parameters.
\end{proof}

On the other hand, large-deviation probabilities are more interesting because Theorem \ref{thm:2} and Theorem \ref{thm:3}, which come from inverting the asymptotic expansion (\ref{2.3}) of the Laplace transform, are not computationally sustainable when $x$ becomes large. For this purpose we conduct a separate analysis using the space--time Laplace transform in (\ref{4.7}), based on extending the method of \cite[Tolmatz, 2003]{T4}. Before that, the next few lemmata will be useful.

\begin{lemma}\label{lem:2}
With $b=\mu/\sigma\in\mathds{R}$ and the space--time Laplace transform in (\ref{4.7}), for any fixed $y$ with $\Re y>b^{2}/2$,
\begin{equation*}
  \bigg|\phi(u,y)-\frac{1}{y-b^{2}/2}\bigg|\leq C_{y}|u|,
\end{equation*}
where $C_{y}$ is a positive constant depending on $y$.
\end{lemma}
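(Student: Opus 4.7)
The plan is to exploit the fact that the constant $1/(y - b^{2}/2)$ is exactly $\phi(0,y)$ and then bound the increment $\phi(u,y)-\phi(0,y)$ by a first-order estimate in $u$. Since $X_{t}\geq0$ $\PP$-a.s., we have $\bar{f}_{X_{t}}(0)=1$, so the definition (\ref{4.7}) gives
\begin{equation*}
  \phi(0,y)=\int^{\infty}_{0}e^{-(y-b^{2}/2)t}\dd t=\frac{1}{y-b^{2}/2},\quad\Re y>\frac{b^{2}}{2}.
\end{equation*}
By linearity of expectation we can then write
\begin{equation*}
  \phi(u,y)-\frac{1}{y-b^{2}/2}=\int^{\infty}_{0}e^{-(y-b^{2}/2)t}\,\E(e^{-uX_{t}}-1)\dd t.
\end{equation*}

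The key pointwise estimate providing the linear factor of $|u|$ is the elementary inequality that, for any complex $w$ with $\Re w\geq0$ and any $x\geq0$,
\begin{equation*}
  |e^{-wx}-1|=\bigg|w\int^{x}_{0}e^{-ws}\dd s\bigg|\leq|w|\int^{x}_{0}e^{-\Re w\cdot s}\dd s\leq|w|x.
\end{equation*}
Applied with $w=u$ (where $\Re u>0$) and $x=X_{t}\geq0$, and taking expectations, this yields the uniform Lipschitz-type bound $|\bar{f}_{X_{t}}(u)-1|\leq|u|M_{X_{t}}(1)$, where $M_{X_{t}}(1)=\E X_{t}$ is the first moment.

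Inserting this into the integral representation above and using $|e^{-(y-b^{2}/2)t}|=e^{-(\Re y-b^{2}/2)t}$, we obtain
\begin{equation*}
  \bigg|\phi(u,y)-\frac{1}{y-b^{2}/2}\bigg|\leq|u|\int^{\infty}_{0}e^{-(\Re y-b^{2}/2)t}M_{X_{t}}(1)\dd t.
\end{equation*}
It only remains to verify that this last integral is finite. Either directly from (\ref{4.2}) or from the crude bound $M_{X_{t}}(1)\leq\int^{t}_{0}\E|\tilde{W}_{s}|\dd s\leq\int^{t}_{0}(|\mu|s+\sigma\sqrt{2s/\pi})\dd s$, the first moment grows at most polynomially in $t$, whereas the exponential weight has strictly negative rate $-(\Re y-b^{2}/2)<0$ by hypothesis; hence the integral defines a finite constant $C_{y}$ depending (continuously) on $y$. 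There is no genuine obstacle in this argument: the only step requiring care is the Lipschitz-type bound on $e^{-uX_{t}}-1$, with the rest amounting to linearity and exponential decay.
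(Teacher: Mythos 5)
Your proof is correct and follows essentially the same route as the paper: both arguments rest on the pointwise bound $|e^{-uX_{t}}-1|\leq|u|X_{t}$ (which you usefully justify via the integral representation, where the paper simply asserts it), followed by integrating $e^{-(\Re y-b^{2}/2)t}\,\E X_{t}$ over $t$ and invoking the at-most-polynomial growth of the first moment from (\ref{4.2}). No substantive differences.
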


\begin{proof}
The fact that for $\Re u>0$, $|e^{-uX_{t}}-1|\leq|u|X_{t}$, $\PP$-a.s., along with Jensen's inequality gives
\begin{equation*}
  |\bar{f}_{X_{t}}(u)-1|\leq|u|\E X_{t}.
\end{equation*}
Thus, for $\Re y>b^{2}/2$,
\begin{align*}
  \bigg|\phi(u,y)-\frac{1}{y-b^{2}/2}\bigg|&\leq\bigg|\int^{\infty}_{0}e^{-(y-b^{2}/2)t}(\bar{f}_{X_{t}}(u)-1)\dd t\bigg| \\
  &\leq\int^{\infty}_{0}e^{-(\Re y-b^{2}/2)t}|\bar{f}_{X_{t}}(u)-1|\dd t \\
  &\leq|u|\int^{\infty}_{0}e^{-(\Re y-b^{2}/2)t}\E X_{t}\dd t.
\end{align*}
Based on (\ref{4.2}), there exists a constant $C>0$ such that $\E X_{t}\leq C\max\{t^{2},1\}$, which with the last inequality proves the claim.
\end{proof}

\begin{lemma}\label{lem:3}
Let $w_{1},w_{2},z_{1},z_{2}\in\mathds{C}$ and $\Bi\equiv\Bi(\cdot)$ denote the second Airy function. Then it holds that
\begin{equation*}
  \frac{e^{2\pi\ii/3}w_{1}}{\Ai'(e^{2\pi\ii/3}z_{1})}-\frac{e^{-2\pi\ii/3}w_{2}}{\Ai'(e^{-2\pi\ii/3}z_{1})}=\frac{2((w_{2}-w_{1})\Ai'(z_{1}) -\ii(w_{1}+w_{2})\Bi'(z_{1}))}{\Ai'^{2}(z_{1})+\Bi'^{2}(z_{1})}
\end{equation*}
and
\begin{equation*}
  \frac{e^{\pi\ii/3}\Ai(e^{2\pi\ii/3}z_{2})}{\Ai'(e^{2\pi\ii/3}z_{1})}-\frac{e^{-\pi\ii/3}\Ai(e^{-2\pi\ii/3}z_{2})}{\Ai'(e^{-2\pi\ii/3}z_{1})} =\frac{2\ii(\Ai'(z_{1})\Bi(z_{2})-\Ai(z_{2})\Bi'(z_{1}))}{\Ai'^{2}(z_{1})+\Bi'^{2}(z_{1})}.
\end{equation*}
\end{lemma}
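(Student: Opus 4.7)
The plan is to treat both identities as direct consequences of the classical Airy connection formulas, which express $\Ai(e^{\pm 2\pi\ii/3}z)$ (and hence $\Ai'(e^{\pm 2\pi\ii/3}z)$) as linear combinations of $\Ai(z)$ and $\Bi(z)$ (and their derivatives). Concretely, writing $\omega = e^{2\pi\ii/3}$, I would start from the standard identities
\begin{equation*}
  \Ai(\omega z)=\tfrac{1}{2}e^{\ii\pi/3}[\Ai(z)-\ii\Bi(z)],\qquad \Ai(\omega^{2}z)=\tfrac{1}{2}e^{-\ii\pi/3}[\Ai(z)+\ii\Bi(z)],
\end{equation*}
as recorded in \cite[Abramowitz and Stegun, 1972, \text{Sect.} 10.4]{AS}, and differentiate them with respect to $z$. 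A small calculation of the phase factors $\omega^{-1}e^{\ii\pi/3}$ and $\omega^{-2}e^{-\ii\pi/3}$ then gives the companion pair
\begin{equation*}
  \Ai'(\omega z)=\tfrac{1}{2}e^{-\ii\pi/3}[\Ai'(z)-\ii\Bi'(z)],\qquad \Ai'(\omega^{2}z)=\tfrac{1}{2}e^{\ii\pi/3}[\Ai'(z)+\ii\Bi'(z)].
\end{equation*}

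Once these four expansions are in hand, the first identity is obtained by substituting into its left-hand side and noting that $\omega\cdot e^{\ii\pi/3}=\omega^{2}\cdot e^{-\ii\pi/3}=-1$, which turns the left-hand side into
\begin{equation*}
  \frac{-2w_{1}}{\Ai'(z_{1})-\ii\Bi'(z_{1})}+\frac{2w_{2}}{\Ai'(z_{1})+\ii\Bi'(z_{1})};
\end{equation*}
placing this over the common denominator $[\Ai'(z_{1})]^{2}+[\Bi'(z_{1})]^{2}$ (obtained from the product of conjugate linear forms) and collecting the real and imaginary combinations of $w_{1},w_{2}$ yields precisely the right-hand side of the first assertion. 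The second identity is handled identically: substituting the expansions for both $\Ai(\omega^{\pm1}z_{2})$ in the numerator and $\Ai'(\omega^{\pm1}z_{1})$ in the denominator, the prefactor ratios collapse to $-1$ in both terms, and after combining over the same common denominator the symmetric pieces $\Ai\Ai'$ and $\Bi\Bi'$ cancel, leaving $2\ii[\Ai'(z_{1})\Bi(z_{2})-\Ai(z_{2})\Bi'(z_{1})]$ in the numerator.

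There is no real conceptual obstacle; the only risk is bookkeeping the cube-root-of-unity phases correctly, especially the derivatives (where $\omega^{-1}$ appears from the chain rule) and the $\pm\ii\pi/3$ factors that multiply the inputs $w_{1},w_{2}$ and $\Ai(\omega^{\pm1}z_{2})$. I would therefore carry out the two computations in parallel and separately verify that the three phase factors in each term multiply to $-1$ before rationalizing the denominators. Together with the Wronskian-type factorization $[\Ai'(z_{1})-\ii\Bi'(z_{1})][\Ai'(z_{1})+\ii\Bi'(z_{1})]=[\Ai'(z_{1})]^{2}+[\Bi'(z_{1})]^{2}$, this is all the algebra required.
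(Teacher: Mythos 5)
Your proof is correct and is essentially the paper's own argument: both reduce the identities to the classical Airy connection formulae for $\Ai(e^{\pm2\pi\ii/3}z)$ and $\Ai'(e^{\pm2\pi\ii/3}z)$ and then rationalize the denominators via $(\Ai'-\ii\Bi')(\Ai'+\ii\Bi')=\Ai'^{2}+\Bi'^{2}$. As a minor but worthwhile point, your chain-rule derivation yields the correct phase $\Ai'(e^{\pm2\pi\ii/3}z)=\tfrac{1}{2}e^{\mp\ii\pi/3}(\Ai'(z)\mp\ii\Bi'(z))$, whereas the formula as transcribed in the paper's proof carries $e^{\pm\ii\pi/3}$ --- a sign typo, since only the $e^{\mp\ii\pi/3}$ version makes the three phase factors in each term multiply to $-1$ and hence produces the stated right-hand sides.
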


\begin{proof}
These formulae are direct consequences of the well-known connection formulae for the Airy functions (see, e.g., \cite[Valle\'{e} and Soares, 2010, \text{Eqs.} 2.15 and 2.16]{VS}),
\begin{equation*}
  \Ai(e^{\pm2\pi\ii/3}z_{2})=\frac{e^{\pm\ii\pi/3}}{2}(\Ai(z_{2})\mp\ii\Bi(z_{2}))\quad\text{and}\quad \Ai'(e^{\pm2\pi\ii/3}z_{1})=\frac{e^{\pm\ii\pi/3}}{2}(\Ai'(z_{1})\mp\ii\Bi'(z_{1})).
\end{equation*}
\end{proof}

\begin{lemma}\label{lem:4}
Let $\bar{\Ai}(p):=\int^{\infty}_{0}e^{-pz}\Ai(z)\dd z$, $p\in\mathds{C}$, denote the (extended) Laplace transform of the first Airy function. Then it holds that
\begin{equation*}
  \bar{\Ai}(pe^{-2\pi\ii/3})+\bar{\Ai}(p)+\bar{\Ai}(pe^{2\pi\ii/3})=e^{-p^{3}/3}.
\end{equation*}
\end{lemma}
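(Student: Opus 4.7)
The plan is to derive a first-order linear ODE for the entire function $\bar{\Ai}$ and exploit the cube-root symmetry $\omega=e^{2\pi\ii/3}$, $\omega^3=1$, $1+\omega+\omega^2=0$ to cancel the inhomogeneity after summing three rotated copies. First I would record that $\bar{\Ai}$ extends to an entire function of $p$ (not only a Laplace transform for $\Re p>0$), because the super-exponential decay $\Ai(z)=O(z^{-1/4}e^{-2z^{3/2}/3})$ as $z\to\infty$ dominates $e^{-pz}$ for any $p\in\mathds{C}$; the same decay validates termination of all boundary contributions at $z=\infty$ below.

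Next I would integrate by parts twice inside $-\bar{\Ai}'(p)=\int^{\infty}_{0}e^{-pz}z\,\Ai(z)\dd z$, invoking the Airy equation $\Ai''(z)=z\,\Ai(z)$. The boundary terms at $z=0$ contribute $-\Ai'(0)-p\,\Ai(0)$, and those at $z=\infty$ vanish, yielding the inhomogeneous ODE
\begin{equation*}
\bar{\Ai}'(p)+p^{2}\bar{\Ai}(p)=p\,\Ai(0)+\Ai'(0),\quad p\in\mathds{C}.
\end{equation*}
Substituting $q=\omega^{\pm1}p$ into this identity and applying the chain rule (using $\omega^{-1}=\omega^{2}$ and $\omega^{3}=1$) gives
\begin{equation*}
\frac{\dd}{\dd p}\bar{\Ai}(\omega^{\pm1}p)+p^{2}\bar{\Ai}(\omega^{\pm1}p)=\omega^{\mp1}p\,\Ai(0)+\omega^{\mp2}\Ai'(0).
\end{equation*}

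Now I would set $S(p):=\bar{\Ai}(\omega p)+\bar{\Ai}(p)+\bar{\Ai}(\omega^{-1}p)$ and sum the three ODEs. The inhomogeneous terms collapse because $1+\omega+\omega^{-1}=0$ and $1+\omega^{-2}+\omega^{2}=0$, leaving the homogeneous equation $S'(p)+p^{2}S(p)=0$. Multiplying by the integrating factor $e^{p^{3}/3}$ and integrating gives $S(p)=S(0)\,e^{-p^{3}/3}$. Finally, the evaluation $S(0)=3\bar{\Ai}(0)=3\int^{\infty}_{0}\Ai(z)\dd z=3\cdot(1/3)=1$ (already invoked in the proof of Theorem~\ref{thm:1}) yields the claim $S(p)=e^{-p^{3}/3}$.

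The analytic content is essentially frictionless; the only verification that requires care is the entireness and the vanishing of boundary terms at infinity under complex-valued $p$, which reduces to the standard asymptotics of $\Ai$, $\Ai'$ for $|\arg z|<\pi/2$ on the positive real axis of integration. Aside from that, the proof is a clean application of the Airy ODE plus the elementary identity $1+\omega+\omega^{2}=0$, with no integrals involving special functions to evaluate.
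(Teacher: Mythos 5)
Your proof is correct, but it takes a genuinely different route from the paper. The paper's proof is essentially a one-line observation: it quotes the explicit closed form (\ref{5.5}) for $\bar{\Ai}(p)$ from Prudnikov et al., notes that the factor $e^{-p^{3}/3}$ and the two $\;_{1}\F_{1}$ functions depend on $p$ only through $p^{3}$ and hence are invariant under $p\mapsto e^{\pm2\pi\ii/3}p$, and then kills the terms linear and quadratic in $p$ using $1+e^{2\pi\ii/3}+e^{-2\pi\ii/3}=0$ (and its square), leaving $3\cdot\tfrac{1}{3}e^{-p^{3}/3}$. You instead derive the inhomogeneous ODE $\bar{\Ai}'(p)+p^{2}\bar{\Ai}(p)=p\,\Ai(0)+\Ai'(0)$ from the Airy equation via two integrations by parts, observe that the inhomogeneity cancels when the three rotated copies are summed, and solve the resulting homogeneous equation with the initial value $S(0)=3\int^{\infty}_{0}\Ai(z)\dd z=1$. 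I checked the details: the boundary terms at $z=0$ and $z=\infty$ are as you state (the super-exponential decay of $\Ai$ and $\Ai'$ on the positive real axis dominates $|e^{-pz}|\leq e^{|p|z}$ for every fixed $p$, which also gives entireness and justifies differentiation under the integral), the chain-rule bookkeeping with $\omega^{3}=1$, $\omega^{-1}=\omega^{2}$ is right, and the two cube-root identities annihilate both inhomogeneous terms. What each approach buys: the paper's argument is shorter but leans on a tabulated special-function identity (and implicitly on the analytic continuation built into that formula), whereas yours is self-contained, requiring only the Airy differential equation and the value $\int^{\infty}_{0}\Ai(z)\dd z=1/3$ already used in the proof of Theorem \ref{thm:1}; as a bonus, your ODE reproduces the structure of (\ref{5.5}) if one solves it by variation of parameters, so the two arguments are consistent at a deeper level.
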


\begin{proof}
Based on the explicit formula (\cite[Prudnikov et al., 1992a, \text{Eq.} 30.2.1.1]{PBMa})
\begin{equation}\label{5.5}
  \bar{\Ai}(p)=e^{-p^{3}/3}\bigg(\frac{1}{3}-\frac{p}{3^{4/3}\Gf(4/3)}\;_{1}\F_{1}\bigg(\frac{1}{3};\frac{4}{3}\bigg|\frac{p^{3}}{3}\bigg) +\frac{p^{2}}{3^{5/3}\Gf(5/3)}\;_{1}\F_{1}\bigg(\frac{2}{3};\frac{5}{3}\bigg|\frac{p^{3}}{3}\bigg)\bigg),
\end{equation}
observe that multiplication of $p$ by $e^{\pm2\pi\ii/3}$ does not change the exponential or $\;_{1}\F_{1}$, while $e^{-2\pi\ii/3}+e^{2\pi\ii/3}+1=0$.
\end{proof}

The following lemma can be seen as an adequate generalization of the well-known method of Laplace for approximating integrals (see, e.g., \cite[Wong, 2001, \text{Sect.} 9]{W2}) when the exponent involves more than one large parameter. To our knowledge, integrals of this kind were first analyzed in \cite[Fulks, 1951]{F}, but there has been no version in the multivariate complex-valued case showing relative error orders.

\begin{lemma}[\underline{A generalized Laplace's method}]\label{lem:5}
Let $g_{1},g_{2},g_{3}$ be complex-valued functions defined on a (possibly unbounded) domain $D\subset\mathds{R}^{d}$, $d\geq1$, such that $g_{1}\in\mathcal{C}^{1}(D)$, $g_{2}\in\mathcal{C}^{2}(D)$, and $g_{3}\in\mathcal{C}^{3}(D)$. Suppose that \smallskip\\
(i) $\Re g_{3}$ attains its global minimum at a unique interior point $w_{0}$ of $D$, at which $\nabla g_{3}(w_{0})=\mathbf{0}$, $\Re\mathrm{Hess}_{g_{3}}(w_{0})$ is positive-definite, and $g_{1}(w_{0})\neq0$; \smallskip\\
(ii) there exists a constant $c_{1}>0$ such that the function $g_{1}e^{-c_{1}g_{3}}$ is absolutely integrable; \smallskip\\
(iii) there exists a constant $c_{2}>0$ such that $\Re g_{2}\leq c_{2}\Re g_{3}$. \smallskip \\
Under these three conditions, it holds that in the limit as $x\rightarrow\infty$,
\begin{align}\label{5.6}
  \int_{D}g_{1}(w)e^{xg_{2}(w)-x^{2}g_{3}(w)}\dd w&=\frac{(2\pi)^{d/2}g_{1}(w_{0})}{x^{d}(\det\mathrm{Hess}_{g_{3}}(w_{0}))^{1/2}}\exp\bigg(xg_{2}(w_{0})-x^{2}g_{3}(w_{0}) \nonumber\\
  &\qquad+\frac{1}{2}(\nabla g_{2})^{\intercal}(w_{0})\mathrm{Hess}^{-1}_{g_{3}}(w_{0})\nabla g_{2}(w_{0})\bigg)(1+O(x^{-1})).
\end{align}
\end{lemma}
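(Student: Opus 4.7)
My approach is a multidimensional, complex-valued generalization of the classical Laplace method, treating $-x^{2}g_{3}$ as the dominant large-parameter term that concentrates mass near $w_{0}$ and $xg_{2}$ as a subleading shift of the Gaussian peak. After translating so that $w_{0}=\mathbf{0}$, I write $H=\mathrm{Hess}_{g_{3}}(\mathbf{0})$, $\mathbf{a}=\nabla g_{2}(\mathbf{0})$ and factor out $\exp(xg_{2}(\mathbf{0})-x^{2}g_{3}(\mathbf{0}))$. The argument has three stages: (1) localization to a small ball around $w_{0}$, (2) rescaling $w=u/x$ with Taylor expansion, and (3) explicit Gaussian evaluation plus error control.

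For stage (1), condition (i) and continuity of $\Re g_{3}$ produce $\delta,\eta>0$ with $\Re g_{3}(w)\geq\Re g_{3}(\mathbf{0})+\eta$ on $D\setminus B(\mathbf{0},\delta)$, where (ii) supplies the requisite growth at infinity when $D$ is unbounded. The factorization
\begin{equation*}
  |g_{1}(w)e^{xg_{2}(w)-x^{2}g_{3}(w)}|=|g_{1}(w)|e^{-c_{1}\Re g_{3}(w)}\cdot e^{x(\Re g_{2}(w)-c_{2}\Re g_{3}(w))}\cdot e^{(c_{1}+xc_{2}-x^{2})\Re g_{3}(w)}
\end{equation*}
lets condition (iii) bound the middle factor by $1$, while the third factor is dominated by $\exp((c_{1}+xc_{2}-x^{2})(\Re g_{3}(\mathbf{0})+\eta))$ on $D\setminus B(\mathbf{0},\delta)$ for $x$ large, and (ii) gives integrability of the first factor. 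The exterior contribution is therefore $O(e^{-\eta x^{2}})$ relative to the expected leading term and thus absorbed in the final error. For stage (2), on $B(\mathbf{0},\delta)$ I substitute $w=u/x$, yielding $\dd w=x^{-d}\dd u$ with domain $B(\mathbf{0},x\delta)\uparrow\mathbb{R}^{d}$, and Taylor-expand using $\nabla g_{3}(\mathbf{0})=\mathbf{0}$ to obtain
\begin{equation*}
  xg_{2}(u/x)-x^{2}g_{3}(u/x)=xg_{2}(\mathbf{0})-x^{2}g_{3}(\mathbf{0})+\mathbf{a}^{\intercal}u-\tfrac{1}{2}u^{\intercal}Hu+R(x,u)
\end{equation*}
with $R(x,u)=O((|u|^{2}+|u|^{3})/x)$ uniformly for $|u|\leq x\delta$, together with $g_{1}(u/x)=g_{1}(\mathbf{0})+O(|u|/x)$.

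In stage (3), dropping $R$ and the correction to $g_{1}$ produces the Gaussian model integral
\begin{equation*}
  \frac{g_{1}(\mathbf{0})e^{xg_{2}(\mathbf{0})-x^{2}g_{3}(\mathbf{0})}}{x^{d}}\int_{\mathbb{R}^{d}}e^{\mathbf{a}^{\intercal}u-\tfrac{1}{2}u^{\intercal}Hu}\dd u,
\end{equation*}
which upon completing the square (well-defined because $\Re H$ is positive-definite, fixing the principal branch of $(\det H)^{1/2}$) reproduces the right-hand side of (\ref{5.6}) without the error factor. To get the relative error $O(x^{-1})$, I would split $B(\mathbf{0},\delta)$ further into a core $|u|\leq x^{1/2+\epsilon}$, where $e^{R}=1+R+O(R^{2})$ is uniformly valid and the polynomial corrections integrate against the shifted Gaussian to give finite moments of order $x^{-1}$, and an intermediate annulus where the Gaussian factor $\exp(-\tfrac{1}{2}u^{\intercal}(\Re H)u)$ already decays faster than any negative power of $x$. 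The main obstacle will be this last step: controlling the complex quadratic form with only $\Re H$ positive-definite, so that Gaussian convergence and branch choices remain uniform, plus tracking the polynomial moment cancellations to verify that the naive $O(x^{-1/2})$ error from odd-order terms does vanish by symmetry and the true remainder is indeed $O(x^{-1})$; a secondary technical point is the unbounded-$D$ case of Step (1), where condition (ii) is indispensable.
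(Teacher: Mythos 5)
Your plan follows essentially the same route as the paper's proof: split off the exterior of a small ball using conditions (ii)--(iii), rescale $w\mapsto w/x$ near $w_{0}$, Taylor-expand ($g_{3}$ to third order, $g_{2}$ to second, $g_{1}$ to first), evaluate the resulting shifted complex Gaussian, and check that every correction enters at relative order $x^{-1}$. One small remark: because the dominant term scales like $x^{2}$ the correct substitution is $u/x$ rather than $u/\sqrt{x}$, so the cubic term of $g_{3}$ already contributes $O(x^{-1})$ and no odd-moment symmetry cancellation is needed; your final worry on that point is moot, and otherwise the argument matches the paper's.
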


\begin{proof}
Under condition (i), let $\epsilon>0$ be such that the open ball with radius $\epsilon$ centered at $w_{0}$, $B_{\epsilon}(w_{0})\subset D$. Consider the decomposition $\int_{D}=\int_{D\setminus B_{\epsilon}(w_{0})}+\int_{B_{\epsilon}(w_{0})}$. For the integral
\begin{equation*}
  J_{0}(x)=\int_{D\setminus B_{\epsilon}(w_{0})}g_{1}(w)e^{xg_{2}(w)-x^{2}g_{3}(w)}\dd w,\quad x>0,
\end{equation*}
condition (iii) implies that for some $\delta>0$,
\begin{align*}
  |J_{0}(x)|&\leq\int_{D\setminus B_{\epsilon}(w_{0})}|g_{1}(w)|e^{x\Re g_{2}(w)-x^{2}\Re g_{3}(w)}\dd w \\
  &\leq\int_{D\setminus B_{\epsilon}(w_{0})}|g_{1}(w)|e^{(c_{2}x-x^{2})\Re g_{3}(w)}\dd w \\
  &=e^{(c_{2}x-x^{2})(\Re g_{3}(w_{0})+\delta)}\int_{D\setminus B_{\epsilon}(w_{0})}|g_{1}(w)|e^{(c_{2}x-x^{2})(\Re g_{3}(w)-\Re g_{3}(w_{0})-\delta)}\dd w.
\end{align*}
For sufficiently large $x$, condition (ii) then implies
\begin{equation}\label{5.7}
  |J_{0}(x)|\leq e^{(c_{2}x-x^{2})(\Re g_{3}(w_{0})+\delta)}\int_{D\setminus B_{\epsilon}(w_{0})}|g_{1}(w)|e^{-c_{1}(\Re g_{3}(w)-\Re g_{3}(w_{0})-\delta)}\dd w,
\end{equation}
with the integral on the right side being finite.

Note that by Taylor's theorem there exist $\varpi_{1}\equiv\varpi_{1}(w)$ and $\varpi_{2}\equiv\varpi_{2}(w)$ within $B_{\epsilon}(w_{0})$ and functions $G_{a}:\mathds{R}^{d}\mapsto\mathds{C}$, $|a|=3$, such that\footnote{With the multi-index notation, the sum in the third equation is taken over all $a\in\mathds{N}^{d}$ such that $|a|:=\sum^{d}_{j=1}a_{j}=3$; also, $\mathrm{D}^{a}=\pd^{|a|}/(\prod^{d}_{j}\pd w^{a_{j}}_{j})$ and $(w-w_{0})^{a}=\prod^{d}_{j=1}(w_{j}-w_{0,j})^{a_{j}}$.}
\begin{align*}
  g_{1}(w)&=g_{1}(w_{0})+(\nabla g_{1})^{\intercal}(\varpi_{1})(w-w_{0}), \\
  g_{2}(w)&=g_{2}(w_{0})+(\nabla g_{2})^{\intercal}(w_{0})(w-w_{0})+\frac{1}{2}(w-w_{0})^{\intercal}\mathrm{Hess}_{g_{2}}(\varpi_{2})(w-w_{0}), \\
  g_{3}(w)&=g_{3}(w_{0})+\frac{1}{2}(w-w_{0})^{\intercal}\mathrm{Hess}_{g_{3}}(w_{0})(w-w_{0})+\sum_{|a|=3}\bigg(\frac{\mathrm{D}^{a}g_{3}(w_{0})}{a!} +G_{a}(w)\bigg)(w-w_{0})^{a},
\end{align*}
and $\lim_{w\rightarrow w_{0}}G_{a}(w)=0$. Substituting these into the integral $\int_{B_{\epsilon}(w_{0})}$ and rearranging terms, we obtain
\begin{equation*}
  \int_{B_{\epsilon}(w_{0})}g_{1}(w)e^{x(g_{2}(w)-g_{2}(w_{0}))-x^{2}(g_{3}(w)-g_{3}(w_{0}))}\dd w=J_{1}(x)+J_{2}(x),
\end{equation*}
where
\begin{align*}
  J_{1}(x)&=g_{1}(w_{0})\int_{B_{\epsilon}(0)}\exp\bigg(x(\nabla g_{2})^{\intercal}(w_{0})w-\frac{x^{2}}{2}w^{\intercal}\mathrm{Hess}_{g_{3}}(w_{0})w+\frac{x}{2}w^{\intercal}\mathrm{Hess}_{g_{2}}(\varpi_{2})w \\
  &\qquad-x^{2}\sum_{|a|=3} \bigg(\frac{\mathrm{D}^{a}g_{3}(w_{0})}{a!}+G_{a}(w+w_{0})\bigg)w^{a}\bigg)\dd w, \\
  J_{2}(x)&=\int_{B_{\epsilon}(0)}(\nabla g_{1})^{\intercal}(\varpi_{1})w\exp\bigg(x(\nabla g_{2})^{\intercal}(w_{0})w-\frac{x^{2}}{2}w^{\intercal}\mathrm{Hess}_{g_{3}}(w_{0})w+\frac{x}{2}w^{\intercal}\mathrm{Hess}_{g_{2}}(\varpi_{2})w \\
  &\qquad-x^{2}\sum_{|a|=3} \bigg(\frac{\mathrm{D}^{a}g_{3}(w_{0})}{a!}+G_{a}(w+w_{0})\bigg)w^{a}\bigg)\dd w.
\end{align*}

Let us consider $J_{1}(x)$. Further applying the substitution $w\mapsto w/x$, we have that
\begin{align*}
  J_{1}(x)&=\frac{g_{1}(w_{0})}{x^{d}}\int_{B_{\epsilon x}(0)}\exp\bigg((\nabla g_{2})^{\intercal}(\varpi_{2})w-\frac{1}{2}w^{\intercal}\mathrm{Hess}_{g_{3}}(w_{0})w+\frac{1}{x}\bigg(\frac{1}{2} w^{\intercal}\mathrm{Hess}_{g_{2}}(\varpi_{2})w \\
  &\qquad-\sum_{|a|=3}\bigg(\frac{\mathrm{D}^{a}g_{3}(w_{0})}{a!}+G_{a}\bigg(\frac{w}{x}+w_{0}\bigg)\bigg)w^{a}\bigg)\bigg)\dd w \\
  &=\frac{g_{1}(w_{0})}{x^{d}}\int_{B_{\epsilon x}(0)}e^{(\nabla g_{2})^{\intercal}(\varpi_{2})w-\frac{1}{2}w^{\intercal}\mathrm{Hess}_{g_{3}}(w_{0})w} \\
  &\qquad\times\Bigg(1+\sum^{\infty}_{m=1}\frac{1}{x^{m}m!}\bigg(\frac{1}{2}w^{\intercal}\mathrm{Hess}_{g_{2}}(\varpi_{2})w -\sum_{|a|=3}\bigg(\frac{\mathrm{D}^{a}g_{3}(w_{0})}{a!}+G_{a}\bigg(\frac{w}{x}+w_{0}\bigg)\bigg)w^{a}\bigg)^{m}\Bigg)\dd w.
\end{align*}
Since $\Re\mathrm{Hess}_{g_{3}}(w_{0})$ is positive-definite (condition (i)), the Fubini theorem is applicable for interchanging integration and summation, and we can write
\begin{equation*}
  J_{1}(x)=\sum^{\infty}_{m=0}J_{1,m}(x),
\end{equation*}
where the series converges absolutely for $x\rightarrow\infty$; in particular,
\begin{align}\label{5.8}
  x^{d}J_{1,0}(x)&=g_{1}(w_{0})\int_{B_{\epsilon x}(0)}e^{(\nabla g_{2})^{\intercal}(w_{0})w-w^{\intercal}\mathrm{Hess}_{g_{3}}(w_{0})w/2}\dd w \nonumber\\
  &=(2\pi)^{d/2}g_{1}(w_{0})\frac{\exp((\nabla g_{2})^{\intercal}(w_{0})\mathrm{Hess}^{-1}_{g_{3}}(w_{0})\nabla g_{2}(w_{0})/2)}{(\det\mathrm{Hess}_{g_{3}}(w_{0}))^{1/2}}(1+o(e^{-x})),\quad\text{as }x\rightarrow\infty,
\end{align}
where the second equality follows directly from the property of the multivariate normal distribution. Similarly, for $m\geq1$,
\begin{align}\label{5.9}
  x^{d+m}J_{1,m}(x)&=\frac{g_{1}(w_{0})}{m!}\int_{B_{\epsilon x}(0)}e^{(\nabla g_{2})^{\intercal}(w_{0})w-w^{\intercal}\mathrm{Hess}_{g_{3}}(w_{0})w/2} \nonumber\\
  &\qquad\times\bigg(\frac{1}{2}w^{\intercal}\mathrm{Hess}_{g_{2}}(\varpi_{2})w -\sum_{|a|=3}\bigg(\frac{\mathrm{D}^{a}g_{3}(w_{0})}{a!}+G_{a}\bigg(\frac{w}{x}+w_{0}\bigg)\bigg)w^{a}\bigg)^{m}\dd w,
\end{align}
which also converges to a finite value as $x\rightarrow\infty$, because $\mathrm{Hess}_{g_{2}}(\varpi_{2})$ and $G_{a}(w/x+w_{0})$ are both bounded for $w\in B_{\epsilon x}(0)$ by the continuity of $g_{2}$ and $g_{3}$ and the multivariate normal distribution has finite moments of all orders. Combining (\ref{5.8}) and (\ref{5.9}), we have
\begin{equation*}
  J_{1}(x)=\frac{(2\pi)^{d/2}g_{1}(w_{0})}{x^{d}}\frac{\exp((\nabla g_{2})^{\intercal}(w_{0})\mathrm{Hess}^{-1}_{g_{3}}(w_{0})\nabla g_{2}(w_{0})/2)}{(\det\mathrm{Hess}_{g_{3}}(w_{0}))^{1/2}}(1+O(x^{-1})),\quad\text{as }x\rightarrow\infty.
\end{equation*}
In the same vein, one can show that $J_{2}(x)=O(x^{-(d+1)})$ as $x\rightarrow\infty$. Hence,
\begin{equation*}
  J_{1}(x)+J_{2}(x)=\frac{(2\pi)^{d/2}g_{1}(w_{0})}{x^{d}}\frac{\exp((\nabla g_{2})^{\intercal}(w_{0})\mathrm{Hess}^{-1}_{g_{3}}(w_{0})\nabla g_{2}(w_{0})/2)}{(\det\mathrm{Hess}_{g_{3}}(w_{0}))^{1/2}}(1+O(x^{-1})),\quad\text{as }x\rightarrow\infty.
\end{equation*}

To conclude, note that the desired integral
\begin{equation*}
  \int_{D}g_{1}(w)e^{xg_{2}(w)-x^{2}g_{3}(w)}\dd w=J_{0}(x)+e^{xg_{2}(w_{0})-x^{2}g_{3}(w_{0})}(J_{1}(x)+J_{2}(x)),
\end{equation*}
but from (\ref{5.7}), $J_{0}(x)=o(x^{-(d+1)}e^{x\Re g_{2}(w_{0})-x^{2}\Re g_{3}(w_{0})})$ as $x\rightarrow\infty$ because $\delta>0$.
\end{proof}

We are ready to prove the main result regarding the large-deviation probabilities.

\begin{theorem}\label{thm:6}
The probability density function (\ref{3.1}) and cumulative distribution function (\ref{3.7}) of $X_{t}$ satisfy
\begin{equation}\label{5.10}
  f_{X_{t}}(x)=\sqrt{\frac{6}{\pi\sigma^{2}t^{3}}}\exp\bigg(-\frac{3\mu^{2}t}{8\sigma^{2}}-\frac{3x^{2}}{2\sigma^{2}t^{3}}\bigg)\cosh\frac{3\mu x}{2\sigma^{2}t}(1+O(x^{-1})),\quad\text{as }x\rightarrow\infty
\end{equation}
and
\begin{equation}\label{5.11}
  F_{X_{t}}(x)=1-\sqrt{\frac{2\sigma^{2}t^{3}}{3\pi}}\frac{1}{x}\exp\bigg(-\frac{3\mu^{2}t}{8\sigma^{2}}-\frac{3x^{2}}{2\sigma^{2}t^{3}}\bigg) \cosh\frac{3\mu x}{2\sigma^{2}t}(1+O(x^{-1})),\quad\text{as }x\rightarrow\infty.
\end{equation}
\end{theorem}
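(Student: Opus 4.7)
The plan is to begin from the space--time Laplace transform $\phi(u,y)$ established in (\ref{4.7}) and to recover the density by a double Bromwich inversion,
$$f_{X_{t}}(x) = \frac{e^{-\mu^{2}t/(2\sigma^{2})}}{(2\pi\ii)^{2}}\int_{\gamma_{y}}\int_{\gamma_{u}}e^{ux+yt}\phi(u,y)\dd u\dd y,$$
with $\gamma_{y}$ placed to the right of $b^{2}/2$ and $\gamma_{u}$ to the right of $0$; Lemma~\ref{lem:2} supplies the growth bound needed both to justify the interchange of integrations and to control the arcs at infinity. For the cumulative distribution function the same procedure applies with $\phi(u,y)/u$ in place of $\phi(u,y)$, and this extra $1/u$ factor is what ultimately produces the $1/x$ prefactor in (\ref{5.11}).

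Next I would deform $\gamma_{u}$ onto a keyhole around the branch cut $u\in(-\infty,0]$ coming from the fractional powers $v^{-4/3}$ and $v^{-2/3}$ in $\phi(u,y)$. Using Lemma~\ref{lem:3}, the jump of the integrand across the cut is rewritten in terms of the real-valued $\Ai$, $\Bi$, $\Ai'$, $\Bi'$; Lemma~\ref{lem:4} then recasts the tail integrals $\int^{\infty}_{y}e^{\pm bz/v}\Ai(az)\dd z$ into a form whose leading exponential factor becomes explicit. After these manipulations the $u$-integration collapses to a real integral over $u'>0$ (obtained by parametrizing the cut via $u=-u'$), and the resulting double integral is amenable to a rescaling $(u',y)=(x\tilde u,x^{2}\tilde y)$ that brings the combined exponent into the form $xg_{2}(\tilde u,\tilde y)-x^{2}g_{3}(\tilde u,\tilde y)$ required by Lemma~\ref{lem:5} with $d=2$.

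With the scaling in place, the stationary-point analysis yields a unique interior saddle $(\tilde u_{0},\tilde y_{0})$ at which $g_{3}(\tilde u_{0},\tilde y_{0})=3/(2\sigma^{2}t^{3})$, matching the Gaussian decay $\exp(-3x^{2}/(2\sigma^{2}t^{3}))$ in (\ref{5.10}). The two summands $e^{\mp by/v}$ of $\phi(u,y)$ produce two symmetric saddle contributions related by $\mu\mapsto-\mu$, and their sum reassembles into $\cosh(3\mu x/(2\sigma^{2}t))$; the residual exponential $\exp(-3\mu^{2}t/(8\sigma^{2}))$ arises from $g_{2}$ evaluated at the saddle together with the linear correction $\tfrac{1}{2}(\nabla g_{2})^{\intercal}\mathrm{Hess}^{-1}_{g_{3}}\nabla g_{2}$ appearing in (\ref{5.6}). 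The $(2\pi)^{d/2}g_{1}(w_{0})/\sqrt{\det\mathrm{Hess}_{g_{3}}(w_{0})}$ prefactor of Lemma~\ref{lem:5} then delivers the constant $\sqrt{6/(\pi\sigma^{2}t^{3})}$, and the remainder $1+O(x^{-1})$ is precisely the error supplied by that lemma.

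The most delicate step, and where I expect the main obstacle to lie, is the contour deformation paired with verification of the hypotheses of Lemma~\ref{lem:5}. One must confirm that $\gamma_{u}$ can be pushed onto the cut without crossing any pole --- the zeros of $\Ai'(2^{1/3}v^{-2/3}y)$ appear as singularities in $y$ rather than $u$ and so are circumvented by a suitable choice of $\gamma_{y}$ --- and that the arcs at infinity contribute negligibly, using the classical asymptotic $\Ai(w)=O(w^{-1/4}e^{-2w^{3/2}/3})$ together with Lemma~\ref{lem:2}. After rescaling, one further needs positive-definiteness of $\Re\mathrm{Hess}_{g_{3}}$ at the saddle and the global majorization $\Re g_{2}\leq c_{2}\Re g_{3}$ of condition (iii) in Lemma~\ref{lem:5}; both are expected to follow from the convexity and monotonicity properties of the real Airy functions along the deformed contour, but spelling them out rigorously is the technical centerpiece of the argument.
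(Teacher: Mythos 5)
Your overall strategy coincides with the paper's: double Bromwich inversion of the space--time transform (\ref{4.7}) (for the CDF, with the extra $1/u$), deformation of the $u$-contour onto a ray through the origin, use of Lemmas \ref{lem:2}--\ref{lem:4} to handle the two sides of the cut and the exponential Airy integrals, a rescaling in which the radial variable scales like $x$ and $\Re y$ like $x^{2}$, and finally the generalized Laplace method of Lemma \ref{lem:5} in dimension $d=2$. There is, however, a concrete error in your justification of the deformation. You assert that the only obstruction in the $u$-plane is a branch cut along $(-\infty,0]$ coming from the fractional powers, and that the zeros of $\Ai'(2^{1/3}v^{-2/3}y)$ ``appear as singularities in $y$ rather than $u$ and so are circumvented by a suitable choice of $\gamma_{y}$.'' This is backwards. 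For each fixed $y$ on the Bromwich line, the map $u\mapsto\phi(u,y+b^{2}/2)$ has genuine poles at those $u$ for which $\Ai'(\xi(v,y))=0$; since the zeros of $\Ai'$ are negative reals, these poles fill the ray $\{\rho e^{\ii(\theta+\pi)}:\rho\geq0\}$ with $\theta=\arg(y+b^{2}/2)$ --- a ray that rotates with $y$ and equals $(-\infty,0]$ only when $y$ is real. No choice of $\gamma_{y}$ removes them. The paper's contour is built around exactly this ray (the arcs $C^{\pm}_{\delta}$ at angles $\theta\pm\pi\mp\delta$), and the reason the limit $\delta\searrow0$ is harmless is not that the poles are dodged but that the combination of the two sides, via Lemma \ref{lem:3}, produces the everywhere-positive denominator $\Ai'^{2}(\xi)+\Bi'^{2}(\xi)$. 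Without this observation your keyhole argument does not close.

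Beyond that, everything that actually produces the constants in (\ref{5.10})--(\ref{5.11}) is asserted to ``match'' rather than derived: the explicit phase functions $\Phi(\varrho,\theta)=3\varrho e^{\ii\theta}+3\varrho^{-1}e^{-\ii\theta}(1+\ii\tan\theta)^{3/2}-\tfrac{9}{2}(1+\ii\tan\theta)$ and $\Psi(\varrho,\theta)=3\sec\theta/(2\varrho)$ obtained after the substitution $c=9x^{2}/(2t^{4})$, $\rho=3x\varrho/t^{3}$; the location $(\varrho,\theta)=(1,0)$ and value $3/2$ of the minimum of $\Re\Phi$; the Hessian and the quadratic correction $(\nabla\Psi)^{\intercal}\mathrm{Hess}^{-1}_{\Phi}\nabla\Psi=1/4$, which is what converts $e^{-b^{2}t/2}$ into $e^{-3b^{2}t/8}$; the verification of condition (iii) of Lemma \ref{lem:5} (the paper checks $\Psi/\Re\Phi\leq 8/7$); and the role of Lemma \ref{lem:4} in showing that the sum of the three rotated transforms $\bar{\Ai}$ equals $1+O(x^{-1})$, which is precisely why the drift survives only through the $\cosh$ factor at leading order. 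You correctly flag these verifications as the technical centerpiece, but as written the proposal is a roadmap with one wrong signpost rather than a proof.
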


\begin{proof}
By the inversion formula, the cumulative distribution function can be expressed as the iterated Bromwich integral
\begin{equation}\label{5.12}
  F_{X_{t}}(x)=\frac{1}{(2\pi\ii)^{2}}\int^{c+\ii\infty}_{c-\ii\infty}e^{yt}\dd y\;\int^{d+\ii\infty}_{d-\ii\infty}\frac{e^{ux}}{u}\phi\bigg(u,y+\frac{b^{2}}{2}\bigg)\dd u,
\end{equation}
for arbitrary $c,d>0$. For $v=\sigma u$ as before, define the variable $\xi\equiv\xi(v,y):=2^{1/3}v^{-2/3}(y+b^{2}/2)$. The substitution $2^{1/3}v^{-2/3}z\mapsto z$ in (\ref{4.7}) yields
\begin{align}\label{5.13}
  \phi\bigg(u,y+\frac{b^{2}}{2}\bigg)&=-\frac{1}{(2v)^{2/3}\Ai'(\xi(v,y))}\bigg(e^{-b\xi(v,y)(2v)^{-1/3}} \int^{\infty}_{\xi(v,y)}e^{bz(2v)^{-1/3}}\Ai(z)\dd z \nonumber\\
  &\quad+e^{b\xi(v,y)(2v)^{-1/3}}\int^{\infty}_{\xi(v,y)}e^{-bz(2v)^{-1/3}}\Ai(z)\dd z\bigg) \nonumber\\
  &=-\frac{1}{(2v)^{2/3}\Ai'(\xi(v,y))}\bigg(e^{-b\xi(v,y)(2v)^{-1/3}}\bigg(\bar{\Ai}(-b(2v)^{-1/3})-\int^{\xi(v,y)}_{0}e^{bz(2v)^{-1/3}}\Ai(z)\dd z\bigg) \nonumber\\
  &\quad+e^{b\xi(v,y)(2v)^{-1/3}}\bigg(\bar{\Ai}(b(2v)^{-1/3})-\int^{\xi(v,y)}_{0}e^{-bz(2v)^{-1/3}}\Ai(z)\dd z\bigg)\bigg).
\end{align}

For any fixed $y$ with $\Re y=c$, we observe that $1/\Ai'(\xi(v,y))$ has singularities exactly on the ray $\{\rho e^{\ii(\theta+\pi)}:\rho\geq0\}$ with $\theta=\arg(y+b^{2}/2)$, so $\phi(u,y+b^{2}/2)$ can be analytically continued into the complex $u$-plane with a cut along this ray, denoted as $D_{\theta}$. The principal values of subsequent fractional powers are understood in $D_{\theta}$.

Observing that any $v\in D_{\theta}$ satisfies $|\arg v^{-1/3}|<\pi/2$, we have that for $|v|\rightarrow\infty$, the Laplace transform $\bar{\Ai}(\pm b(2v)^{-1/3})$ must be bounded (also refer to (\ref{5.5})). With $|e^{\pm\xi(v,y)(2v)^{-1/3}}|$ bounded as well and $|b(2v)^{-1/3}|\rightarrow0$ in the limit, it follows that
\begin{align}\label{5.14}
  \bigg|\phi\bigg(u,y+\frac{b^{2}}{2}\bigg)\bigg|&\leq\frac{1}{|2v|^{2/3}|\Ai'(\xi(v,y))|} \bigg(\big|e^{-b\xi(v,y)(2v)^{-1/3}}\bar{\Ai}(-b(2v)^{-1/3})\big|+\big|e^{b\xi(v,y)(2v)^{-1/3}}\bar{\Ai}(b(2v)^{-1/3})\big| \nonumber\\
  &\qquad+\bigg|e^{-b\xi(v,y)(2v)^{-1/3}}\int^{\xi(v,y)}_{0}e^{bz(2v)^{-1/3}}\Ai(z)\dd z\bigg|+\bigg|e^{b\xi(v,y)(2v)^{-1/3}}\int^{\xi(v,y)}_{0}e^{-bz(2v)^{-1/3}}\Ai(z)\dd z\bigg|\bigg) \nonumber\\
  &\leq C|u|^{-2/3}
\end{align}
for $u$ large and some constant $C>0$ (independent of $y$). This implies that (\ref{5.12}) exists as a double integral and in particular the (inner) $u$-integral is well-defined in $D_{\theta}$, which we focus on next.

For convenience, assume $\sigma=1$ for now, so that $b=\mu$ and $v=u$, and define the regularized function (with infinitesimal $\epsilon>0$)
\begin{equation*}
  G_{\epsilon}(u):=\frac{e^{ux}}{u-\ii\epsilon}\phi\bigg(u,y+\frac{b^{2}}{2}\bigg),\quad u\in D_{\theta},
\end{equation*}
where $\phi$ takes the form in (\ref{5.13}). We will consider the Bromwich integral $1/(2\pi\ii)\int^{d+\ii\infty}_{d-\ii\infty}G_{\epsilon}(u)\dd u$.

This integral can be transformed by using the same type of contour as in \cite[Tolmatz, 2003]{T4}. To be precise, for any fixed $\theta\in(-\pi/2,\pi/2)$, $\delta>0$ small, and $0<r<\epsilon<d<R$, we define a closed contour oriented counterclockwise as follows:
\begin{equation*}
  C_{\delta}=C_{d}\cup C^{\delta}_{R}\cup C^{+}_{\delta}\cup C^{\delta}_{r}\cup C^{-}_{\delta},
\end{equation*}
where
\begin{align*}
  C_{d}&=\{u:\Re u=d,\;|u|\leq R\}, \\
  C^{\delta}_{R}&=\{u:\Re u\leq d,\;|u|=R,\;|\arg u-\theta|<\pi-\delta\}, \\
  C^{\delta}_{r}&=\{u:\Re u\leq d,\;|u|=r,\;|\arg u-\theta|<\pi-\delta\}, \\
  C^{\pm}_{\delta}&=\{u:\arg u=\theta\pm\pi\mp\delta,\;r\leq|u|\leq R\}.
\end{align*}

Since $G_{\epsilon}$ has but one simple pole at $\ii\epsilon$ inside $C_{\delta}$, by Cauchy's residue theorem,
\begin{equation*}
  \int_{C_{\delta}}G_{\epsilon}(u)\dd u=2\pi\ii\underset{u=\ii\epsilon}{\Res}G_{\epsilon}(u).
\end{equation*}
By passing $\delta\searrow0$ we have immediately
\begin{equation*}
  \bigg(\int_{C_{d}}+\int_{C^{0}_{R}}+\int_{C^{+}_{0}}+\int_{C^{0}_{r}}+\int_{C^{-}_{0}}\bigg)G_{\epsilon}(u)\dd u=2\pi\ii\underset{u=\ii\epsilon}{\Res}G_{\epsilon}(u).
\end{equation*}

By (\ref{5.14}) we know $|G_{\epsilon}(u)|\leq Ce^{dx}R^{-5/3}$ for $u\in C^{0}_{R}$, so the integral $\int_{C^{0}_{R}}$ vanishes in the limit as $R\rightarrow\infty$. In the same limit the integral $\int_{C_{d}}$ becomes the Bromwich integral of $G_{\epsilon}$. For the integral $\int_{C^{0}_{r}}$, by Lemma \ref{lem:2} and analytic continuation we know that $\phi(u,y+b^{2}/2)=1/y+O(|u|)$ for $|u|\rightarrow0$ and so $|G_{\epsilon}(u)|$ is bounded for $u\in C^{0}_{r}$; thus the integral $\int_{C^{0}_{r}}$ also vanishes in the limit as $r\searrow0$.

For the two integrals along the ray, by an obvious re-parametrization we have
\begin{equation}\label{5.15}
  \int_{C^{+}_{0}}+\int_{C^{-}_{0}}=\int^{\infty}_{0}\frac{e^{\ii\theta}e^{-\rho e^{\ii\theta}x}e^{-2\ii\theta/3}}{(2\rho)^{2/3}(\rho e^{\ii\theta}+\ii\epsilon)}\big(e^{-2\pi\ii/3}\psi_{+}(\rho,\theta)-e^{2\pi\ii/3}\psi_{-}(\rho,\theta)\big)\dd\rho,
\end{equation}
where
\begin{align}\label{5.16}
  \psi_{\pm}(\rho,\theta)&=\frac{1}{\Ai'(e^{\mp2\pi\ii/3}\xi(\rho))}\bigg(e^{-be^{\mp2\pi\ii/3}\xi(\rho)(2\rho e^{\ii(\theta\pm\pi)})^{-1/3}}\int^{\infty}_{e^{\mp2\pi\ii/3}\xi(\rho)}e^{bz(2\rho e^{\ii(\theta\pm\pi)})^{-1/3}}\Ai(z)\dd z \nonumber\\
  &\quad+e^{be^{\mp2\pi\ii/3}\xi(\rho)(2\rho e^{\ii(\theta\pm\pi)})^{-1/3}}\int^{\infty}_{e^{\mp2\pi\ii/3}\xi(\rho)}e^{-bz(2\rho e^{\ii(\theta\pm\pi)})^{-1/3}}\Ai(z)\dd z\bigg) \nonumber\\
  &=\frac{1}{\Ai'(e^{\mp2\pi\ii/3}\xi(\rho))}\bigg(e^{-be^{\mp2\pi\ii/3}\xi(\rho)(2\rho e^{\ii(\theta\pm\pi)})^{-1/3}}\bar{\Ai}(-b(2\rho e^{\ii(\theta\pm\pi)})^{-1/3}) \nonumber\\
  &\qquad+e^{be^{\mp2\pi\ii/3}\xi(\rho)(2\rho e^{\ii(\theta\pm\pi)})^{-1/3}}\bar{\Ai}(b(2\rho e^{\ii(\theta\pm\pi)})^{-1/3}) \nonumber\\
  &\quad-\bigg(e^{-be^{\mp2\pi\ii/3}\xi(\rho)(2\rho e^{\ii(\theta\pm\pi)})^{-1/3}}\int^{e^{\mp2\pi\ii/3}\xi(\rho)}_{0}e^{bz(2\rho e^{\ii(\theta\pm\pi)})^{-1/3}}\Ai(z)\dd z \nonumber\\
  &\qquad+e^{be^{\mp2\pi\ii/3}\xi(\rho)(2\rho e^{\ii(\theta\pm\pi)})^{-1/3}}\int^{e^{\mp2\pi\ii/3}\xi(\rho)}_{0}e^{-bz(2\rho e^{\ii(\theta\pm\pi)})^{-1/3}}\Ai(z)\dd z\bigg)\bigg),
\end{align}
with $\xi\equiv\xi(\rho)=2^{1/3}\rho^{-2/3}e^{-2\ii\theta/3}(y+b^{2}/2)$ re-parameterized as well. After applying the substitutions $z\mapsto e^{\mp2\pi\ii/3}z$ to the integrals in the second equality and rearranging terms in (\ref{5.15}) and (\ref{5.16}), we have
\begin{equation}\label{5.17}
  \int_{C^{+}_{0}}+\int_{C^{-}_{0}}=-\int^{\infty}_{0}\frac{e^{\ii\theta}e^{-2\ii\theta/3}e^{-\rho e^{\ii\theta}x}}{(2\rho)^{2/3}(\rho e^{\ii\theta}+\ii\epsilon)}\bigg(A_{1}(\rho)+\int^{\xi(\rho)}_{0}A_{2}(\rho,z)\dd z\bigg)\dd\rho,
\end{equation}
where
\begin{align*}
  A_{1}(\rho)&=\frac{e^{2\pi\ii/3}}{\Ai'(e^{2\pi\ii/3}\xi(\rho))}\big(e^{-be^{2\pi\ii/3}\xi(\rho)(2\rho e^{\ii(\theta-\pi)})^{-1/3}}\bar{\Ai}(-b(2\rho e^{\ii(\theta-\pi)})^{-1/3}) \\
  &\qquad+e^{be^{2\pi\ii/3}\xi(\rho)(2\rho e^{\ii(\theta-\pi)})^{-1/3}}\bar{\Ai}(b(2\rho e^{\ii(\theta-\pi)})^{-1/3})\big) \\
  &\quad-\frac{e^{-2\pi\ii/3}}{\Ai'(e^{-2\pi\ii/3}\xi(\rho))}\big(e^{-be^{-2\pi\ii/3}\xi(\rho)(2\rho e^{\ii(\theta+\pi)})^{-1/3}}\bar{\Ai}(-b(2\rho e^{\ii(\theta+\pi)})^{-1/3}) \\
  &\qquad+e^{be^{-2\pi\ii/3}\xi(\rho)(2\rho e^{\ii(\theta+\pi)})^{-1/3}}\bar{\Ai}(b(2\rho e^{\ii(\theta+\pi)})^{-1/3})\big) \\
  &=e^{b\xi(\rho)(2\rho e^{\ii\theta})^{-1/3}}\bigg(\frac{e^{2\pi\ii/3}\bar{\Ai}(-b(2\rho e^{\ii(\theta-\pi)})^{-1/3})}{\Ai'(e^{2\pi\ii/3}\xi(\rho))}-\frac{e^{-2\pi\ii/3}\bar{\Ai}(-b(2\rho e^{\ii(\theta+\pi)})^{-1/3})}{\Ai'(e^{-2\pi\ii/3}\xi(\rho))}\bigg) \\
  &\quad+e^{-b\xi(\rho)(2\rho e^{\ii\theta})^{-1/3}}\bigg(\frac{e^{2\pi\ii/3}\bar{\Ai}(b(2\rho e^{\ii(\theta-\pi)})^{-1/3})}{\Ai'(e^{2\pi\ii/3}\xi(\rho))}-\frac{e^{-2\pi\ii/3}\bar{\Ai}(b(2\rho e^{\ii(\theta+\pi)})^{-1/3})}{\Ai'(e^{-2\pi\ii/3}\xi(\rho))}\bigg)
\end{align*}
and similarly,
\begin{align*}
  A_{2}(\rho,z)&=e^{\pi\ii/3}\big(e^{b(2\rho e^{\ii\theta})^{-1/3}(\xi(\rho)-z)}+e^{-b(2\rho e^{\ii\theta})^{-1/3}(\xi(\rho)-z)}\big)\frac{\Ai(e^{2\pi\ii/3}z)}{\Ai'(e^{2\pi\ii/3}\xi(\rho))} \\
  &\quad-e^{-\pi\ii/3}\big(e^{b(2\rho e^{\ii\theta})^{-1/3}(\xi(\rho)-z)}+e^{-b(2\rho e^{\ii\theta})^{-1/3}(\xi(\rho)-z)}\big)\frac{\Ai(e^{-2\pi\ii/3}z)}{\Ai'(e^{-2\pi\ii/3}\xi(\rho))}.
\end{align*}
Then, by applying Lemma \ref{lem:3} we have the following equivalent expressions:
\begin{align}\label{5.18}
  A_{1}(\rho)&=\frac{2}{\Ai'^{2}(\xi(\rho))+\Bi'^{2}(\xi(\rho))}\bigg(e^{b\xi(\rho)(2\rho e^{\ii\theta})^{-1/3}}\big((\bar{\Ai}(-b(2\rho e^{\ii(\theta+\pi)})^{-1/3})-\bar{\Ai}(-b(2\rho e^{\ii(\theta-\pi)})^{-1/3}))\Ai'(\xi(\rho)) \nonumber\\
  &\qquad-\ii(\bar{\Ai}(-b(2\rho e^{\ii(\theta-\pi)})^{-1/3})+\bar{\Ai}(-b(2\rho e^{\ii(\theta+\pi)})^{-1/3}))\Bi'(\xi(\rho))\big) \nonumber\\
  &\quad+e^{-b\xi(\rho)(2\rho e^{\ii\theta})^{-1/3}}\big((\bar{\Ai}(b(2\rho e^{\ii(\theta+\pi)})^{-1/3})-\bar{\Ai}(b(2\rho e^{\ii(\theta-\pi)})^{-1/3}))\Ai'(\xi(\rho)) \nonumber\\
  &\qquad-\ii(\bar{\Ai}(b(2\rho e^{\ii(\theta-\pi)})^{-1/3})+\bar{\Ai}(b(2\rho e^{\ii(\theta+\pi)})^{-1/3}))\Bi'(\xi(\rho))\big)\bigg), \nonumber\\
  A_{2}(\rho,z)&=\frac{2\ii(\Ai'(\xi(\rho))\Bi(z)-\Ai(z)\Bi'(\xi(\rho)))}{\Ai'^{2}(\xi(\rho))+\Bi'^{2}(\xi(\rho))}\big(e^{b(2\rho e^{\ii\theta})^{-1/3}(\xi(\rho)-z)}+e^{-b(2\rho e^{\ii\theta})^{-1/3}(\xi(\rho)-z)}\big).
\end{align}

Now, from Lemma \ref{lem:2} it is immediate that
\begin{equation*}
  \lim_{\epsilon\searrow0}\underset{u=\ii\epsilon}{\Res}G_{\epsilon}(u)=\frac{1}{y},
\end{equation*}
and by putting (\ref{5.18}) back in (\ref{5.17}) and passing $\epsilon\searrow0$ the desired Bromwich integral becomes
\begin{equation}\label{5.19}
  \frac{1}{2\pi\ii}\int^{d+\ii\infty}_{d-\ii\infty}\frac{e^{ux}}{u}\phi\bigg(u,y+\frac{b^{2}}{2}\bigg)\dd u=\frac{1}{y}+\frac{1}{2\pi\ii}\int^{\infty}_{0}\frac{e^{-2\ii\theta/3-\rho e^{\ii\theta}x}}{2^{2/3}\rho^{5/3}}\bigg(A_{1}(\rho)+\int^{\xi(\rho)}_{0}A_{2}(\rho,z)\dd z\bigg)\dd\rho;
\end{equation}
here taking the limit inside the integral is allowed because of (\ref{5.14}). When coupled with (\ref{5.19}), (\ref{5.12}) implies that
\begin{equation*}
  F_{X_{t}}(x;\sigma=1)=1+\frac{1}{2\pi\ii}\int^{c+\ii\infty}_{c-\ii\infty}e^{yt}\dd y\;\frac{1}{2\pi\ii}\int^{\infty}_{0}\frac{e^{-2\ii\theta/3-\rho e^{\ii\theta}x}}{2^{2/3}\rho^{5/3}}\bigg(A_{1}(\rho)+\int^{\xi(\rho)}_{0}A_{2}(\rho,z)\dd z\bigg)\dd\rho,
\end{equation*}
which, for the same reason, exists as a double integral in the domain $\mathds{R}_{+}\times(c+\ii\mathds{R})$.

Recall that $\Re y=c$ and $\Im y=(c+b^{2}/2)\tan\theta$, so we adopt the parametrization $y=c+\ii(c+b^{2}/2)\tan\theta$ to write
\begin{align}\label{5.20}
  F_{X_{t}}(x;\sigma=1)&=1-\frac{c+b^{2}/2}{4\times2^{2/3}\pi^{2}}\iint_{\mathds{R}_{+}\times(-\pi/2,\pi/2)}\rho^{-5/3} \exp\bigg(t\bigg(c+\ii\bigg(c+\frac{b^{2}}{2}\bigg)\tan\theta\bigg)-\frac{2\ii\theta}{3}-\rho e^{\ii\theta}x\bigg)\sec^{2}\theta \nonumber\\
  &\qquad\times\ii\bigg(A_{1}(\rho,\theta)+\int^{\xi(\rho,\theta)}_{0}A_{2}(\rho,\theta,z)\dd z\bigg)\dd(\rho,\theta),
\end{align}
with the arguments of $\xi$, $A_{1}$, and $A_{2}$ expanded to show explicit dependency on $\theta$ (in place of $y$).

Note that $\arg\xi(\rho,\theta)=\theta/3$. By using the asymptotic expansion of $\Bi'$ in \cite[Abramowitz and Stegun, 1972, \text{Eq.} 10.4.66]{AS}, or specifically, $\Bi'(w)=w^{1/4}e^{2/3w^{3/2}}/\sqrt{\pi}(1+O(w^{-3/2}))$ as $|w|\rightarrow\infty$ with $|\arg w|<\pi/3$, and comparing it against (\ref{4.8}) the following estimates are easily justified:
\begin{align}\label{5.21}
  A_{1}(\rho,\theta)&=-\frac{2\ii}{\Bi'(\xi(\rho,\theta))}\big(e^{b\xi(\rho,\theta)(2\rho e^{\ii\theta})^{-1/3}}(\bar{\Ai}(-b(2\rho e^{\ii(\theta-\pi)})^{-1/3})+\bar{\Ai}(-b(2\rho e^{\ii(\theta+\pi)})^{-1/3})) \nonumber\\
  &\quad+e^{-b\xi(\rho,\theta)(2\rho e^{\ii\theta})^{-1/3}}(\bar{\Ai}(b(2\rho e^{\ii(\theta-\pi)})^{-1/3})+\bar{\Ai}(b(2\rho e^{\ii(\theta+\pi)})^{-1/3}))\big)(1+E_{1}(\rho,\theta)),
\end{align}
with the remainder estimate $|E_{1}(\rho,\theta)|<C_{1}e^{-\gamma_{1}|\xi(\rho,\theta)|^{3/2}}$ for some positive constants $C_{1}$ and $\gamma_{1}$. A similar argument shows the existence of positive constants $C_{2}$ and $\gamma_{2}$ such that for large $|\xi(\rho,\theta)|$,
\begin{align}\label{5.22}
  \int^{\xi(\rho,\theta)}_{0}A_{2}(\rho,\theta,z)\dd z&=-\frac{2\ii}{\Bi'(\xi(\rho,\theta))}\bigg(e^{b\xi(\rho,\theta)(2\rho e^{\ii\theta})^{-1/3}}\int^{\xi(\rho,\theta)}_{0}e^{-b(2\rho e^{\ii\theta})^{-1/3}z}\Ai(z)\dd z \nonumber\\
  &\quad+e^{-b\xi(\rho,\theta)(2\rho e^{\ii\theta})^{-1/3}}\int^{\xi(\rho,\theta)}_{0}e^{b(2\rho e^{\ii\theta})^{-1/3}z}\Ai(z)\dd z\bigg)\big(1+E_{2}(\rho,\theta)\big)
\end{align}
and $|E_{2}(\rho,\theta)|<C_{2}e^{-\gamma_{2}|\xi(\rho,\theta)|^{3/2}}$. For the integrals on the right side of (\ref{5.22}), since $\arg\xi(\rho,\theta)=\theta/3$, we have
\begin{align}\label{5.23}
  \bigg|\bigg(\int^{\xi(\rho,\theta)}_{0}-\int^{\infty}_{0}\bigg)e^{\mp b(2\rho e^{\ii\theta})^{-1/3}z}\Ai(z)\dd z\bigg|&=\bigg|\int^{\infty}_{\xi(\rho,\theta)}e^{\mp b(2\rho e^{\ii\theta})^{-1/3}z}\Ai(z)\dd z\bigg| \nonumber\\
  &\leq C_{3}\int^{\infty}_{|\xi(\rho,\theta)|}e^{|b|(2\rho)^{-1/3}z}e^{-2/3\cos(\theta/2)z^{3/2}}\dd z \nonumber\\
  &\leq C_{4}e^{-\gamma_{4}|\xi(\rho,\theta)|^{3/2}}
\end{align}
for positive constants $C_{3}$, $C_{4}$, and $\gamma_{4}$, while $\int^{\infty}_{0}e^{\mp b(2\rho e^{\ii\theta})^{-1/3}z}\Ai(z)\dd z=\bar{\Ai}(\pm b(2\rho e^{\ii\theta})^{-1/3})$. Combining (\ref{5.21}), (\ref{5.22}), and (\ref{5.23}) and merging terms we can claim that, when $\xi$ is treated as an independent variable (with $\arg\xi=\theta/3$),
\begin{equation}\label{5.24}
  \ii\bigg(A_{1}(\rho,\theta)+\int^{\xi}_{0}A_{2}(\rho,\theta,z)\dd z\bigg)=\frac{2H_{b}(\rho,\theta)}{\Bi'(\xi)}(1+E(\rho,\theta)),
\end{equation}
where
\begin{align*}
  H_{b}(\rho,\theta)&:=e^{b\xi(2\rho e^{\ii\theta})^{-1/3}}(\bar{\Ai}(-b(2\rho e^{\ii(\theta-\pi)})^{-1/3})+\bar{\Ai}(b(2\rho e^{\ii\theta})^{-1/3})+\bar{\Ai}(-b(2\rho e^{\ii(\theta+\pi)})^{-1/3})) \\
  &\quad+e^{-b\xi(2\rho e^{\ii\theta})^{-1/3}}(\bar{\Ai}(b(2\rho e^{\ii(\theta-\pi)})^{-1/3})+\bar{\Ai}(-b(2\rho e^{\ii\theta})^{-1/3})+\bar{\Ai}(b(2\rho e^{\ii(\theta+\pi)})^{-1/3}))
\end{align*}
is a drift-dependent function\footnote{It is clear that $H_{-b}$ is the same as $H_{b}$, and note the constancy $H_{0}\equiv2$ in the drift-less case.} and the remainder estimate $|E(\rho,\theta)|<Ce^{-\gamma|\xi|^{3/2}}$ for some positive constants $C$ and $\gamma$.

Since $c>0$ is arbitrary, we shall see that the choices
\begin{equation}\label{5.25}
  c=\frac{9x^{2}}{2t^{4}}\quad\text{and}\quad\rho=\frac{3x}{t^{3}}\varrho,\quad\varrho\geq0,
\end{equation}
as implied from the drift-less case in \cite[Tolmatz, 2003, \text{Eqs.} (92) and (93)]{T4}, are workable for preparing (\ref{5.20}) in a form suitable for applying the generalized Laplace's method in Lemma \ref{lem:5}. Subject to (\ref{5.25}), using the asymptotic expansion of $\Bi'$ again we have that as $x\rightarrow\infty$,
\begin{align}\label{5.26}
  \frac{1}{\Bi'(\xi)}&=\frac{\sqrt{\pi}e^{-2/3\xi^{3/2}}}{\xi^{1/4}}(1+O(\xi^{-3/2})) \nonumber\\
  &=\frac{(3\varrho x)^{1/6}\sqrt{\pi}}{2^{1/12}\sqrt{t}(9x^{2}/(2t^{4})+b^{2}/2)^{1/4}(1+\ii\tan\theta)^{1/4}} \nonumber\\
  &\quad\times\exp\bigg(\frac{\ii\theta}{6}-\frac{2\sqrt{2}t^{3}e^{-\ii\theta}(1+\ii\tan\theta)^{3/2}}{9\varrho x}\bigg(\frac{9x^{2}}{2t^{4}}+\frac{b^{2}}{2}\bigg)^{3/2}\bigg)(1+O(x^{-2})) \nonumber\\
  &=\frac{\sqrt{\pi t}(2\varrho)^{1/6}}{(3x)^{1/3}(1+\ii\tan\theta)^{1/4}} \exp\bigg(\frac{\ii\theta}{6}-\frac{e^{-\ii\theta}(1+\ii\tan\theta)^{3/2}(6x^{2}+b^{2}t^{4})}{2\varrho t^{3}}\bigg)(1+O(x^{-2})).
\end{align}
Also, subject to (\ref{5.25}), by Lemma \ref{lem:4} we have the estimate
\begin{equation}\label{5.27}
  \bar{\Ai}(\mp b(2\rho e^{\ii(\theta-\pi)})^{-1/3})+\bar{\Ai}(\pm b(2\rho e^{\ii\theta})^{-1/3})+\bar{\Ai}(\mp b(2\rho e^{\ii(\theta+\pi)})^{-1/3})=1+O(x^{-1}),
\end{equation}
which implies
\begin{equation}\label{5.28}
  H_{b}(\rho,\theta)=\big(e^{3be^{-\ii\theta}(1+\ii\tan\theta)x/(2\varrho t)}+e^{-3be^{-\ii\theta}(1+\ii\tan\theta)x/(2\varrho t)}\big)(1+O(x^{-1})),\quad\text{as }x\rightarrow\infty.
\end{equation}
After collocating (\ref{5.24}), (\ref{5.26}), and (\ref{5.28}) with (\ref{5.20}) and simplifying things, we obtain for $x$ large,
\begin{align}\label{5.29}
  F_{X_{t}}(x;\sigma=1)&=1-\frac{3x}{4\sqrt{2}(\pi t)^{3/2}}\iint_{\mathds{R}_{+}\times(-\pi/2,\pi/2)}\frac{\sec^{2}\theta}{\varrho^{3/2}(1+\ii\tan\theta)^{1/4}} \nonumber\\
  &\quad\times\big(e^{b\Psi(\varrho,\theta)x/t}+e^{-b\Psi(\varrho,\theta)x/t}\big) \nonumber\\
  &\quad\times\exp\bigg(\frac{\ii b^{2}t\tan\theta}{2}-\frac{b^{2}te^{-\ii\theta}(1+\ii\tan\theta)^{3/2}}{2\varrho}-\frac{\ii\theta}{2} -\frac{x^{2}}{t^{3}}\Phi(\varrho,\theta)\bigg)\dd(\varrho,\theta)\;(1+O(x^{-1})),
\end{align}
with the functions
\begin{align*}
  \Psi(\varrho,\theta)&:=\frac{3e^{-\ii\theta}(1+\ii\tan\theta)}{2\varrho}=\frac{3\sec\theta}{2\varrho}, \\
  \Phi(\varrho,\theta)&:=3\varrho e^{\ii\theta}+3\varrho^{-1}e^{-\ii\theta}(1+\ii\tan\theta)^{3/2}-\frac{9}{2}(1+\ii\tan\theta).
\end{align*}

It is easily verifiable that the function $\Re\Phi(\varrho,\theta)=3\varrho\cos\theta+3\varrho^{-1}\cos(\theta-3/2\arg(1+\ii\tan\theta))\sec^{3/2}\theta-9/2$ attains its global minimum of $3/2$ at $(\varrho,\theta)=(1,0)$, with the Hessian matrix
\begin{equation*}
  \mathrm{Hess}_{\Phi}(1,0)=\left(
  \begin{array}{cc}
    6 & 3\ii/2 \\
    3\ii/2 & 3/4
  \end{array}\right);
\end{equation*}
at the same point, the function $\Psi(\varrho,\theta)$ has the gradient $\nabla\Psi(1,0)=(3/2,0)^{\intercal}$.

These form the basis to apply the generalized Laplace's method in Lemma \ref{lem:5}. Indeed, by specifying $d=2$, $D=\mathds{R}_{+}\times(-\pi/2,\pi/2)$, $g_{2}=\pm b\Psi/t$, $g_{3}=\Phi/t^{3}$, and $g_{1}$ consisting of everything else in the integrand, conditions (i) and (ii) are automatically satisfied, while condition (iii) is verified by checking that $\Psi/\Re\Phi\leq8/7$. Consequently, we obtain
\begin{align}\label{5.30}
  \iint_{\mathds{R}_{++}\times(-\pi/2,\pi/2)}&=\frac{2\pi t^{3}e^{-b^{2}t/2}(e^{3bx/(2t)}+e^{-3bx/(2t)})e^{-3x^{2}/(2t^{3})}} {x^{2}\sqrt{\det\mathrm{Hess}_{\Phi}(1,0)}} \nonumber\\
  &\quad\times\exp\bigg(\frac{b^{2}t(\nabla\Psi)^{\intercal}(1,0)\mathrm{Hess}^{-1}_{\Phi}(1,0)\nabla\Psi(1,0)}{2}\bigg)(1+O(x^{-1})) \nonumber\\
  &=\frac{4\pi t^{3}e^{-3b^{2}t/8}(e^{3bx/(2t)}+e^{-3bx/(2t)})e^{-3x^{2}/(2t^{3})}}{3\sqrt{3}x^{2}}(1+O(x^{-1})).
\end{align}
Putting (\ref{5.30}) into (\ref{5.29}) and simplifying yields
\begin{equation*}
  F_{X_{t}}(x;\sigma=1)=1-\sqrt{\frac{2t^{3}}{3\pi}}\frac{e^{-3b^{2}t/8-3x^{2}/(2t^{3})}}{x}\cosh\frac{3bx}{2t}(1+O(x^{-1})),
\end{equation*}
from which (\ref{5.11}) is recovered upon applying $b=\mu/\sigma$ and $x\mapsto x/\sigma$.

Differentiation of (\ref{5.11}) with respect to $x$ gives the corresponding formula (\ref{5.10}) for the density function after simplifications, completing the proof.
\end{proof}

A few remarks are in order. When rearranged for general $\sigma>0$, (\ref{5.20}) has given an alternative double-integral representation for the cumulative distribution function, namely
\begin{align*}
  F_{X_{t}}(x)&=1-\frac{c+b^{2}/2}{4\times2^{2/3}\pi^{2}}\iint_{\mathds{R}_{+}\times(-\pi/2,\pi/2)}\rho^{-5/3} \exp\bigg(t\bigg(c+\ii\bigg(c+\frac{b^{2}}{2}\bigg)\tan\theta\bigg)-\frac{2\ii\theta}{3}-\frac{\rho e^{\ii\theta}x}{\sigma}\bigg)\sec^{2}\theta \\
  &\qquad\times\ii\bigg(A_{1}(\rho,\theta)+\int^{\xi(\rho,\theta)}_{0}A_{2}(\rho,\theta,z)\dd z\bigg)\dd(\rho,\theta),\quad x>0,
\end{align*}
where $c>0$ is arbitrary, $b=\mu/\sigma$, $\xi(\rho,\theta)=2^{1/3}\rho^{-2/3}e^{-2\ii\theta/3}(c+b^{2}/2)(1+\ii\tan\theta)$, and $A_{1}$ and $A_{2}$ are as specified in (\ref{5.18}).

It is also clear from (\ref{5.10}) that the distribution of $X_{t}$ always has a light right tail, regardless of drift. This observation agrees with the behaviors of its skewness and excess kurtosis discussed in Corollary \ref{cor:2}.

By setting $\mu=0$ in Theorem \ref{thm:6}, the main result of \cite[Tolmatz, 2003]{T4} is immediately recovered. In this case, Lemma \ref{lem:5} becomes the standard Laplace's method for which the relative error order in (\ref{5.6}) is $O(x^{-2})$, and the asymptotic expansion of $\bar{\Ai}$ in (\ref{5.27}) goes away, and thus the associated relative error of $O(x^{-1})$ in (\ref{5.10}) and (\ref{5.11}) can be reduced to $O(x^{-2})$.

Yet another implication from Theorem \ref{thm:6} is the large deviation for the moments. We state the next corollary.

\begin{corollary}\label{cor:3}
The moments (\ref{4.1}) of $X_{t}$ satisfy
\begin{equation}\label{5.31}
  M_{X_{t}}(n)=\bigg(\frac{2}{3}\bigg)^{n/2}\frac{\sigma^{n}t^{3n/2}}{\sqrt{\pi}}\Gf\bigg(\frac{n+1}{2}\bigg) \;_{1}\F_{1}\bigg(-\frac{n}{2};\frac{1}{2}\bigg|-\frac{3\mu^{2}t}{8\sigma^{2}}\bigg)(1+O(n^{-1})),\quad\text{as }n\rightarrow\infty.
\end{equation}
\end{corollary}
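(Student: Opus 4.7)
The plan is to deduce (\ref{5.31}) from Theorem~\ref{thm:6} by inserting the tail estimate (\ref{5.10}) into $M_{X_{t}}(n)=\int^{\infty}_{0}x^{n}f_{X_{t}}(x)\dd x$ and evaluating the resulting integral in closed form. The underlying localisation is standard: for large $n$ the integrand $x^{n}f_{X_{t}}(x)$ concentrates at a saddle $x^{\ast}\sim\sigma t^{3/2}\sqrt{n/3}$ that diverges with $n$, so the contribution from any bounded interval $[0,T]$ is at most $T^{n}$ in absolute value, which is exponentially dominated by the eventual growth rate $\Gf((n+1)/2)\sim(n/2)^{(n-1)/2}e^{-n/2}\sqrt{2\pi}$ of the leading factor in (\ref{5.31}). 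The tail approximation may therefore be applied globally, up to a multiplicative factor $1+O(1/x)$.

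The central step is the exact evaluation, with $a:=3/(2\sigma^{2}t^{3})$ and $b:=3\mu/(2\sigma^{2}t)$, of
\begin{equation*}
  \int^{\infty}_{0}x^{n}e^{-ax^{2}}\cosh(bx)\dd x=\tfrac{1}{2}a^{-(n+1)/2}\Gf\bigl(\tfrac{n+1}{2}\bigr){\;_{1}\F_{1}}\bigl(\tfrac{n+1}{2};\tfrac{1}{2}\bigm|\tfrac{b^{2}}{4a}\bigr).
\end{equation*}
I would establish this by expanding $\cosh(bx)=\sum_{k\geq0}(bx)^{2k}/(2k)!$, interchanging summation and integration, and using $\int^{\infty}_{0}x^{n+2k}e^{-ax^{2}}\dd x=\tfrac{1}{2}a^{-(n+2k+1)/2}\Gf((n+2k+1)/2)$; the resulting series in $b^{2}/(4a)$ is identified with ${\;_{1}\F_{1}}$ through the duplication $(2k)!=4^{k}k!(1/2)_{k}$ together with the Pochhammer relation $\Gf((n+1)/2+k)=\Gf((n+1)/2)((n+1)/2)_{k}$.

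A single application of Kummer's transformation ${\;_{1}\F_{1}}(\alpha;\beta|z)=e^{z}{\;_{1}\F_{1}}(\beta-\alpha;\beta|-z)$ then rewrites the hypergeometric factor as $e^{b^{2}/(4a)}{\;_{1}\F_{1}}(-n/2;1/2|-b^{2}/(4a))$. Because $b^{2}/(4a)=3\mu^{2}t/(8\sigma^{2})$ for the values of $a,b$ at hand, this exponential cancels exactly against the $e^{-3\mu^{2}t/(8\sigma^{2})}$ prefactor from (\ref{5.10}), while the remaining constants $a^{-(n+1)/2}=(2/3)^{(n+1)/2}\sigma^{n+1}t^{3(n+1)/2}$ combined with $\sqrt{6/(\pi\sigma^{2}t^{3})}$ collapse exactly to the shape declared in (\ref{5.31}).

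The main obstacle is the sharpness of the remainder. The $O(1/x)$ relative error in Theorem~\ref{thm:6}, integrated against $x^{n}$, would naively produce only an $O(n^{-1/2})$ correction through the Stirling ratio $\Gf(n/2)/\Gf((n+1)/2)\sim\sqrt{2/n}$. To reach the claimed $O(n^{-1})$, the first subleading correction has to be evaluated explicitly and folded back into the leading expression. Tracing it through Lemma~\ref{lem:4}, the omitted factor in (\ref{5.27}) is $e^{\mp p^{3}/3}$ with $p=b(2\rho e^{\ii\theta})^{-1/3}$, which contributes the explicit next-to-leading term $-\tfrac{b^{3}t^{3}}{18x}\tanh(3\mu x/(2\sigma^{2}t))$ times the leading density; this term integrates in closed form by the same Kummer reduction used above and, via a contiguous relation for ${\;_{1}\F_{1}}$, can be absorbed back into the leading hypergeometric factor so that only a genuine $O(x^{-2})$ remainder survives, yielding the stated $O(n^{-1})$.
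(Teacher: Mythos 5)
Your proposal follows essentially the same route as the paper: insert the tail estimate (\ref{5.10}) into $M_{X_{t}}(n)=\int_{0}^{\infty}x^{n}f_{X_{t}}(x)\,\dd x$, evaluate the Gaussian--$\cosh$ integral as $\tfrac12 a^{-(n+1)/2}\Gf(\tfrac{n+1}{2})\,_{1}\F_{1}(\tfrac{n+1}{2};\tfrac12|b^{2}/(4a))$, and apply Kummer's transformation so that the exponential prefactor cancels. Your constants check out ($b^{2}/(4a)=3\mu^{2}t/(8\sigma^{2})$ and $\tfrac12\sqrt{6/(\pi\sigma^{2}t^{3})}\,a^{-(n+1)/2}=(2/3)^{n/2}\sigma^{n}t^{3n/2}/\sqrt{\pi}$), and your derivation of the integral by expanding $\cosh$ and using $(2k)!=4^{k}k!(1/2)_{k}$ is a clean, self-contained substitute for the paper's appeal to a Laplace-transform table. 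The explicit localisation argument (the contribution of any bounded interval is at most $T^{n}$, which is swamped by the superexponential growth of $\Gf(\tfrac{n+1}{2})(2\sigma^{2}t^{3}/3)^{n/2}$) is a genuine improvement in rigour over the paper, which applies the $x\rightarrow\infty$ asymptotic globally without comment.

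The one place where your argument does not close is the remainder order, and you are right to flag it: integrating the $O(x^{-1})$ relative error of (\ref{5.10}) against $x^{n}e^{-ax^{2}}\cosh(bx)$ naively yields only an $O(n^{-1/2})$ relative correction. However, your proposed repair is incomplete. The correction you trace through Lemma \ref{lem:4} and (\ref{5.27}) --- the factor $e^{\mp p^{3}/3}$ producing a $\tanh$ term of relative size $O(1/x)$ --- is only \emph{one} source of the $O(x^{-1})$ error in Theorem \ref{thm:6}. The generalized Laplace's method of Lemma \ref{lem:5} itself contributes an uncomputed $O(x^{-1})$ relative term when $g_{2}\not\equiv0$ (from the cross terms $J_{1,1}$ and $J_{2}$ involving $\mathrm{Hess}_{g_{2}}$, $\nabla g_{1}$, and the third derivatives of $g_{3}$, which cancel only in the drift-free case by parity). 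Absorbing the Lemma \ref{lem:4} correction alone therefore does not leave ``a genuine $O(x^{-2})$ remainder.'' To honestly reach the stated $O(n^{-1})$ one would have to compute the full next-order term of the two-parameter Laplace expansion as well; note that the paper's own proof does not do this either and simply asserts $O(n^{-1})$, so on the leading-order asymptotic you and the paper agree, and the gap you identify is one the paper shares.
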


\begin{proof}
Using (\ref{5.10}) we have that as $n\rightarrow\infty$,
\begin{align*}
  M_{X_{t}}(n)&=\int^{\infty}_{0}x^{n}f_{X_{t}}(x)\dd x\\
  &=\sqrt{\frac{6}{\pi\sigma^{2}t^{3}}}e^{-3\mu^{2}t/(8\sigma^{2})}\int^{\infty}_{0}x^{n}e^{-3x^{2}/(2\sigma^{2}t^{3})}\cosh\frac{3\mu x}{2\sigma^{2}t}(1+O(x^{-1}))\dd x \\
  &=\sqrt{\frac{3}{2\pi\sigma^{2}t^{3}}}e^{-3\mu^{2}t/(8\sigma^{2})}\int^{\infty}_{0}e^{-3x/(2\sigma^{2}t^{3})}x^{(n-1)/2} \cosh\frac{3\mu\sqrt{x}}{2\sigma^{2}t}(1+O(x^{-1/2}))\dd x.
\end{align*}
The last integral is the Laplace transform of the function $x^{\nu}\cosh(3\mu\sqrt{x})/(2\sigma^{2}t)$ in $x$ evaluated at $p=3/(2\sigma^{2}t^{3})$, where $\nu\in\{(n-1)/2,n/2-1\}$. For example, we may consult the formula in \cite[Prudnikov et al., 1992a, \text{Eq.} 2.2.1.16]{PBMa} and use the connection formulae in \cite[Abramowitz and Stegun, 1972, \text{Eqs.} 19.2.1 and 19.3.1]{AS} to write it in terms of $\;_{1}\F_{1}$ (for notational consistency) and obtain
\begin{equation*}
  M_{X_{t}}(n)=e^{-3\mu^{2}t/(8\sigma^{2})}\bigg(\frac{2}{3}\bigg)^{n/2}\frac{\sigma^{n}t^{3n/2}}{\sqrt{\pi}}\Gf\bigg(\frac{n+1}{2}\bigg) \;_{1}\F_{1}\bigg(\frac{n+1}{2};\frac{1}{2}\bigg|\frac{3\mu^{2}t}{8\sigma^{2}}\bigg)(1+O(n^{-1})),\quad\text{as }n\rightarrow\infty,
\end{equation*}
which is the same as (\ref{5.31}) by the Kummer transformation on $\;_{1}\F_{1}$ (see \cite[Abramowitz and Stegun, 1972, \text{Eqs.} 13.1.27]{AS}).
\end{proof}

\vspace{0.2in}

\section{Numerical experiments}\label{sec:6}

This section serves to illustrate the exact formulae that we have derived before, in particular those in Theorem \ref{thm:2}, Theorem \ref{thm:3}, and Theorem \ref{thm:4}. Similar experiments on the asymptotic formulae given in Theorem \ref{thm:5}, Theorem \ref{thm:6}, and Corollary \ref{cor:3} show that the approximations are generally very sharp in the stated directions for the arguments, but their numerical results are excluded in the paper for a concise presentation.

To truncate the infinite series in (\ref{3.2}) for the probability density function, we may rewrite for any fixed $t>0$ $f_{X_{t}}(x)=\sum^{\infty}_{k=0}h_{k,t}(x)$ with $h_{k,t}(x)$, $(k,x)\in\mathds{N}\times\mathds{R}_{++}$, being the summand in the $k$-series, and consider stopping the series at some (sufficiently large) integer $K>0$. Note that the zeros of $\Ai'$ have the asymptotic estimate (\cite[Abramowitz and Stegun, 1972, \text{Eqs.} 10.4.95 and 10.4.105]{AS})
\begin{equation*}
  \alpha'_{k}=-\bigg(\frac{3\pi(4k-3)}{8}\bigg)^{2/3}(1+O(k^{-2})),\quad\text{as }k\rightarrow\infty.
\end{equation*}
This along with the estimate (\ref{5.4}) implies the following geometric error bound for the remainder:
\begin{equation}\label{6.1}
  \Bigg|\sum^{\infty}_{k=K+1}h_{k,t}(x)\Bigg|<\frac{|h_{K,t}(x)h_{K+1,t}(x)|}{|h_{K,t}(x)|-|h_{K+1,t}(x)|},
\end{equation}
which, of course, increases with the magnitude of drift relative to dispersion, i.e., one will generally demand a larger $K$ for a larger value of $|b|\equiv|\mu|/\sigma$. A similar argument goes for the truncation of the series in (\ref{3.8}) for the cumulative distribution function.

Allowing for easy comparison, in what follows the dispersion parameter and time are fixed at $\sigma=1$ and $t=1$ and only four choices of the drift parameter $\mu\in\{0,1,3/2,2\}$ are considered. This parametric range is sufficient for visualization of the impact of drift on the actual distribution and thus demonstration of overall stability and efficiency of the formulae in applications. Again, recall that changing the sign of $\mu$ makes no difference. It is understood that we will focus on the following functionals:
\begin{equation*}
  \int^{1}_{0}|W_{s}|\dd s,\quad\int^{1}_{0}|s+W_{s}|\dd s,\quad\int^{1}_{0}\bigg|\frac{3s}{2}+W_{s}\bigg|\dd s,\quad\int^{1}_{0}|2s+W_{s}|\dd s.
\end{equation*}
In these four cases, with (\ref{6.1}) the series in (\ref{3.2}) is truncated at $K=6,9,10,11$, respectively, and the series in (\ref{3.8}) at $K=6,7,9,11$, respectively. These choices have been carefully made to ensure an error bound of less than $10^{-4}$ for all input $x$-values (especially at the right tail) for plotting and Figure \ref{fig:2}, Figure \ref{fig:3}, Figure \ref{fig:4}, and Figure \ref{fig:5} show the results.

For the moments the formula (\ref{4.3}) along with (\ref{4.4}) and (\ref{4.5}) is used for obtaining analytic expressions and their corresponding numerical values are all rounded to 10 significant digits. All implementations are done in Mathematica$^\circledR$ (\cite[Wolfram Research, Inc., 2023]{W1}) and results are reported in Table \ref{tab:1},\footnote{Table \ref{tab:1} is the same as \cite[Tak\'{a}cs, 1993, \text{Tab.} 3]{T1}.} Table \ref{tab:2}, Table \ref{tab:3}, and Table \ref{tab:4}.

Importantly, for small to moderate values of $b=\mu/\sigma\neq0$ it usually requires a high machine precision to correctly convert the exact values of some high-order moments to numerical values. Clearly, this challenge is caused by taking differences of large numbers subject to division by irrationals. Somewhat surprisingly, such numerical conversion can easily fail in many other programming languages, including Matlab$^\circledR$, at least in default settings -- check, e.g., the last three rows in Table \ref{tab:2}. To effectively avoid this issue, one may switch to the formula (\ref{4.17}) with proper truncation of the infinite series, as discussed in Section \ref{sec:4}, while the asymptotic estimate (\ref{5.31}) can also come in handy when necessary.

\clearpage % to avoid a bad box

\vspace*{0.7in}

\begin{figure}[H]
  \centering
  \includegraphics[scale=0.45]{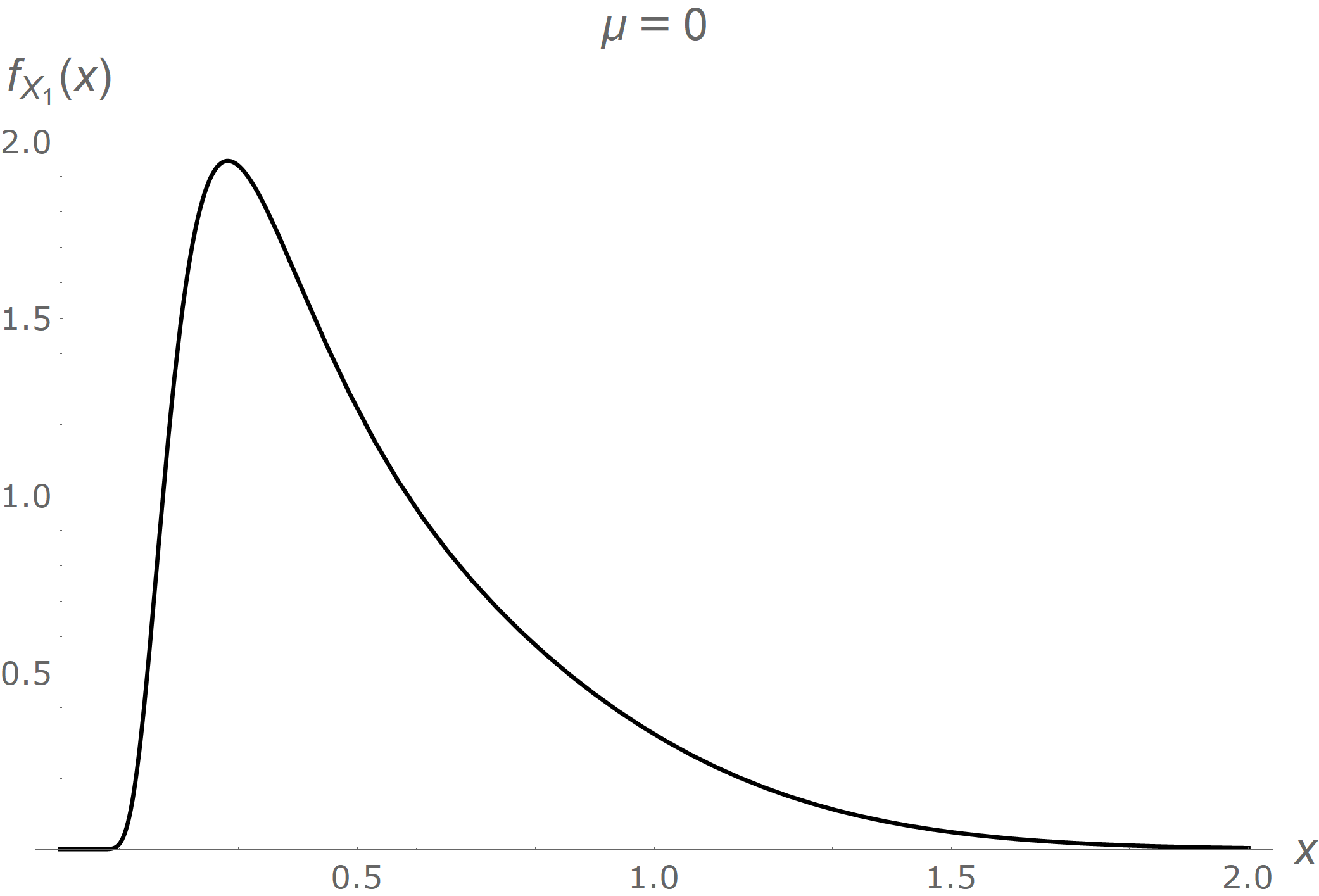}
  \includegraphics[scale=0.45]{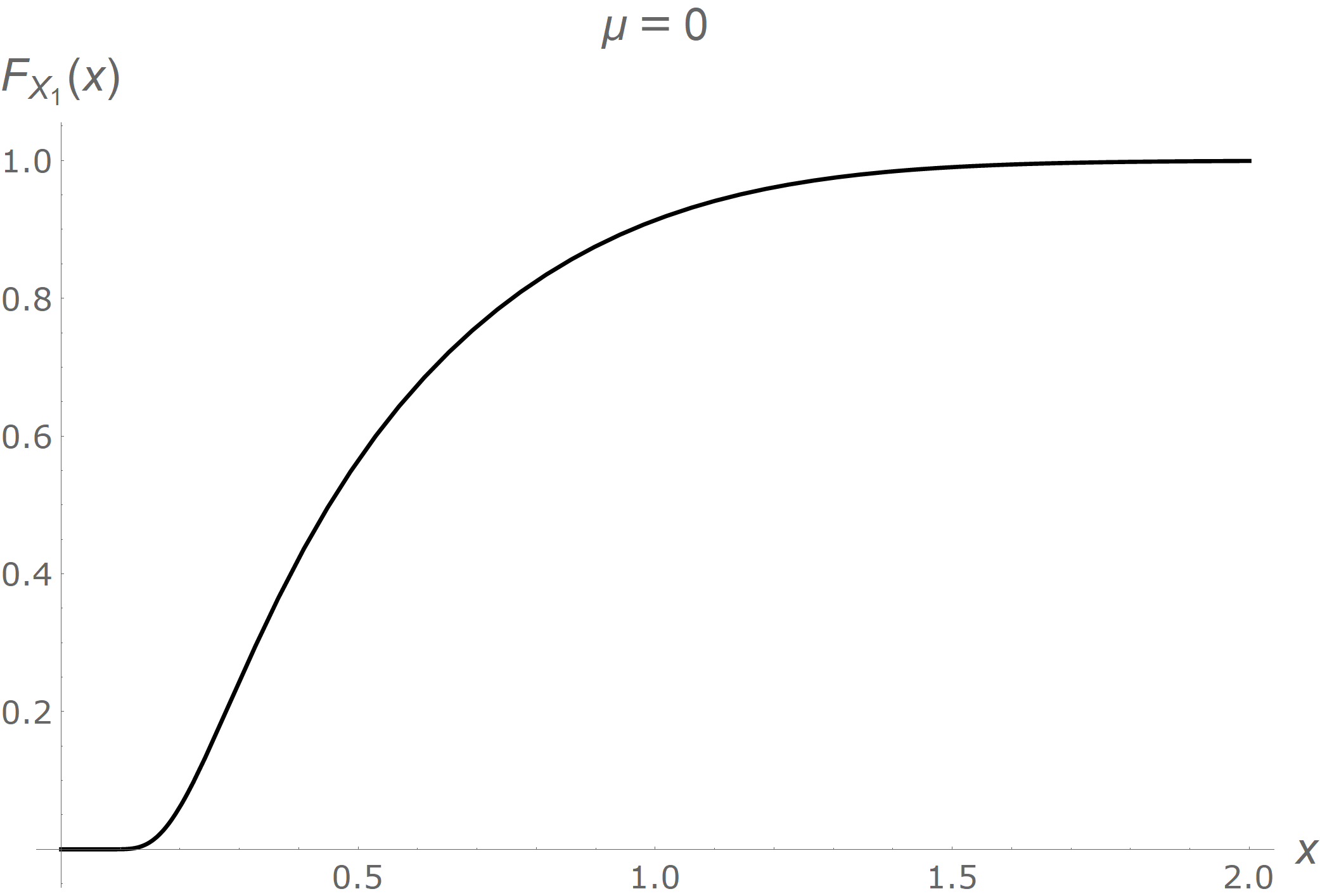}\\
  \caption{Probability density function and cumulative distribution function (I)}
  \label{fig:2}
\end{figure}

\begin{table}[H]\small
  \centering
  \renewcommand{\arraystretch}{1.5}
  \caption{Exact and numerical values of first 10 moments (I)}
  \label{tab:1}
  \begin{tabular}{|c|c|c|}
  \hline
  $n$ & \multicolumn{2}{|c|}{$M_{X_{1}}(n)$, $\mu=0$} \\ \hline
  0 & 1 & 1.000000000 \\
  1 & $\frac{4}{3\sqrt{2\pi}}$ & 0.5319230405 \\
  2 & $\frac{3}{8}$ & 0.3750000000 \\
  3 & $\frac{263}{315\sqrt{2\pi}}$ & 0.3330851420 \\
  4 & $\frac{903}{2560}$ & 0.3527343750 \\
  5 & $\frac{2119}{1980\sqrt{2\pi}}$ & 0.4269488344 \\
  6 & $\frac{37623}{65536}$ & 0.5740814209 \\
  7 & $\frac{11074363}{5250960\sqrt{2\pi}}$ & 0.8413759825 \\
  8 & $\frac{114752519}{86507520}$ & 1.326503395 \\
  9 & $\frac{3845017725821}{688400856000\sqrt{2\pi}}$ & 2.228265881 \\
  10 & $\frac{189970427903}{47982837760}$ & 3.959132823 \\ \hline
  \end{tabular}
\end{table}

\vspace*{0.7in}

\begin{figure}[H]
  \centering
  \includegraphics[scale=0.45]{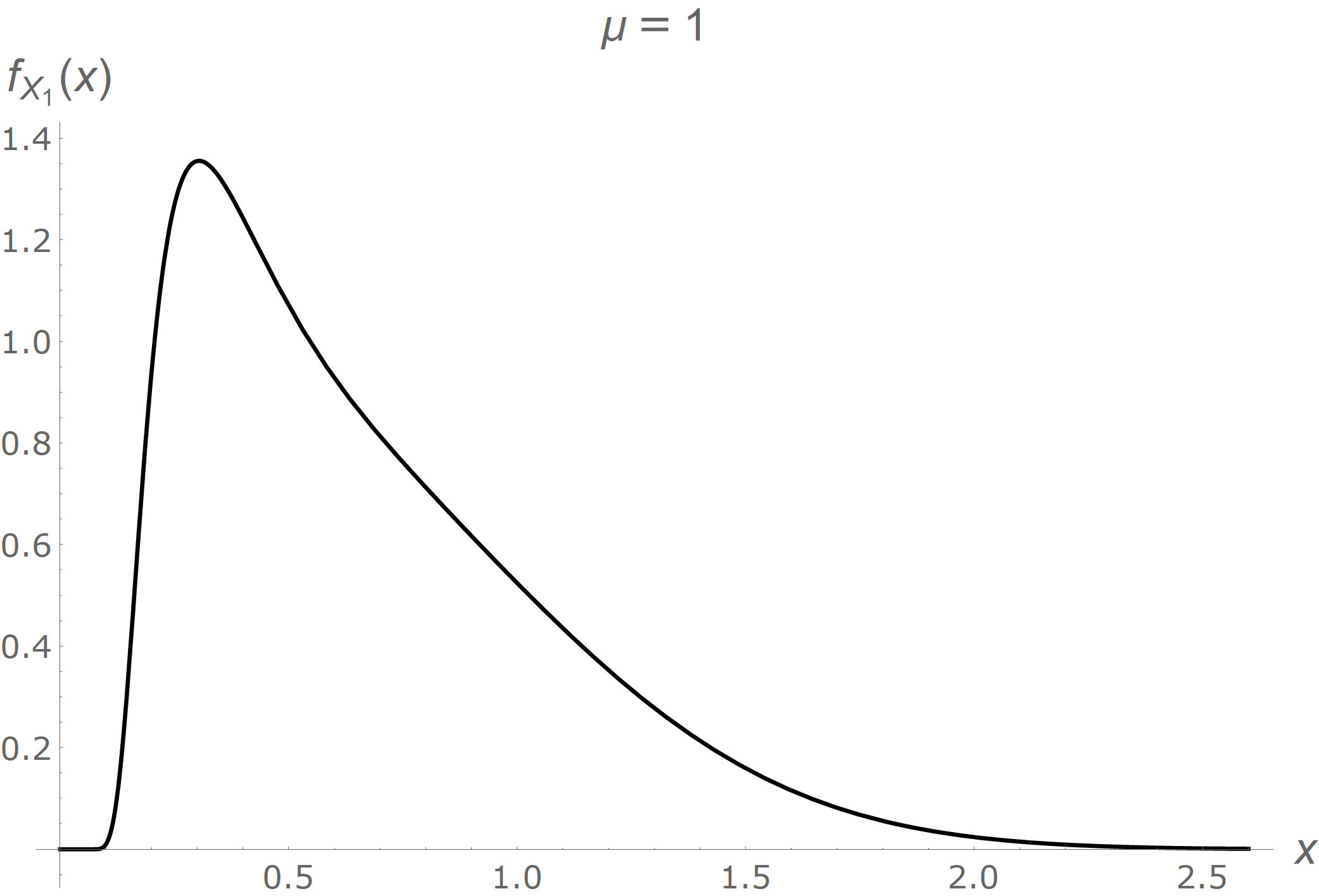}
  \includegraphics[scale=0.45]{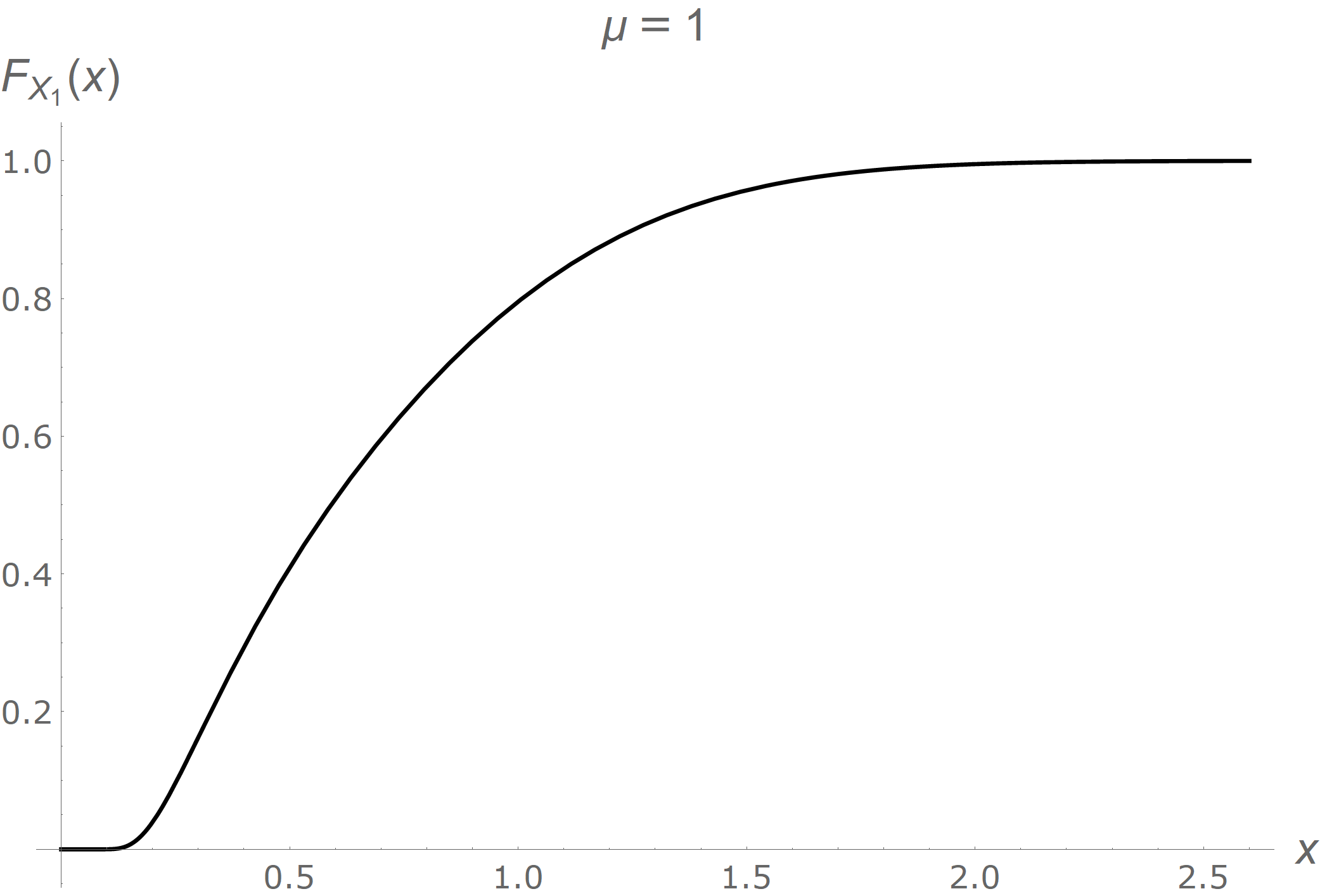}\\
  \caption{Probability density function and cumulative distribution function (II)}
  \label{fig:3}
\end{figure}

\begin{table}[H]\small
  \centering
  \renewcommand{\arraystretch}{1.5}
  \caption{Exact and numerical values of first 10 moments (II)}
  \label{tab:2}
  \begin{tabular}{|c|c|c|}
  \hline
  $n$ & \multicolumn{2}{|c|}{$M_{X_{1}}(n)$, $\mu=1$} \\ \hline
  0 & 1 & 1.000000000 \\
  1 & $\erf\frac{1}{\sqrt{2}}$ & 0.6826894921 \\
  2 & $\frac{73}{12}-\frac{9}{\sqrt{e}}$ & 0.6245573959 \\
  3 & $\frac{939}{4}\erf\frac{1}{\sqrt{2}}-\frac{3297}{5\sqrt{2e\pi}}$ & 0.7058625313 \\
  4 & $\frac{947239}{48}-\frac{2082213}{64\sqrt{e}}$ & 0.9266997049 \\
  5 & $\frac{131820379}{48}\erf\frac{1}{\sqrt{2}}-\frac{30992841}{4\sqrt{2e\pi}}$ & 1.360264768 \\
  6 & $\frac{327870262103}{576}-\frac{1922014837647}{2048\sqrt{e}}$ & 2.180797287 \\
  7 & $\frac{94633197938147}{576}\erf\frac{1}{\sqrt{2}}-\frac{11268507167892207}{24310\sqrt{2e\pi}}$ & 3.761284796 \\
  8 & $\frac{435221973765425411}{6912}-\frac{17008823125880209071}{163840\sqrt{e}}$ & 6.905910669 \\
  9 & $\frac{7916985797851502369}{256}\erf\frac{1}{\sqrt{2}}-\frac{570418135170885612056077}{6537520\sqrt{2e\pi}}$ & 13.39398679 \\
  10 & $\frac{174519586738094763144179}{9216}-\frac{37451989117311817635466828851}{1199570944\sqrt{e}}$ & 27.27829330 \\ \hline
  \end{tabular}
\end{table}

\vspace*{0.7in}

\begin{figure}[H]
  \centering
  \includegraphics[scale=0.45]{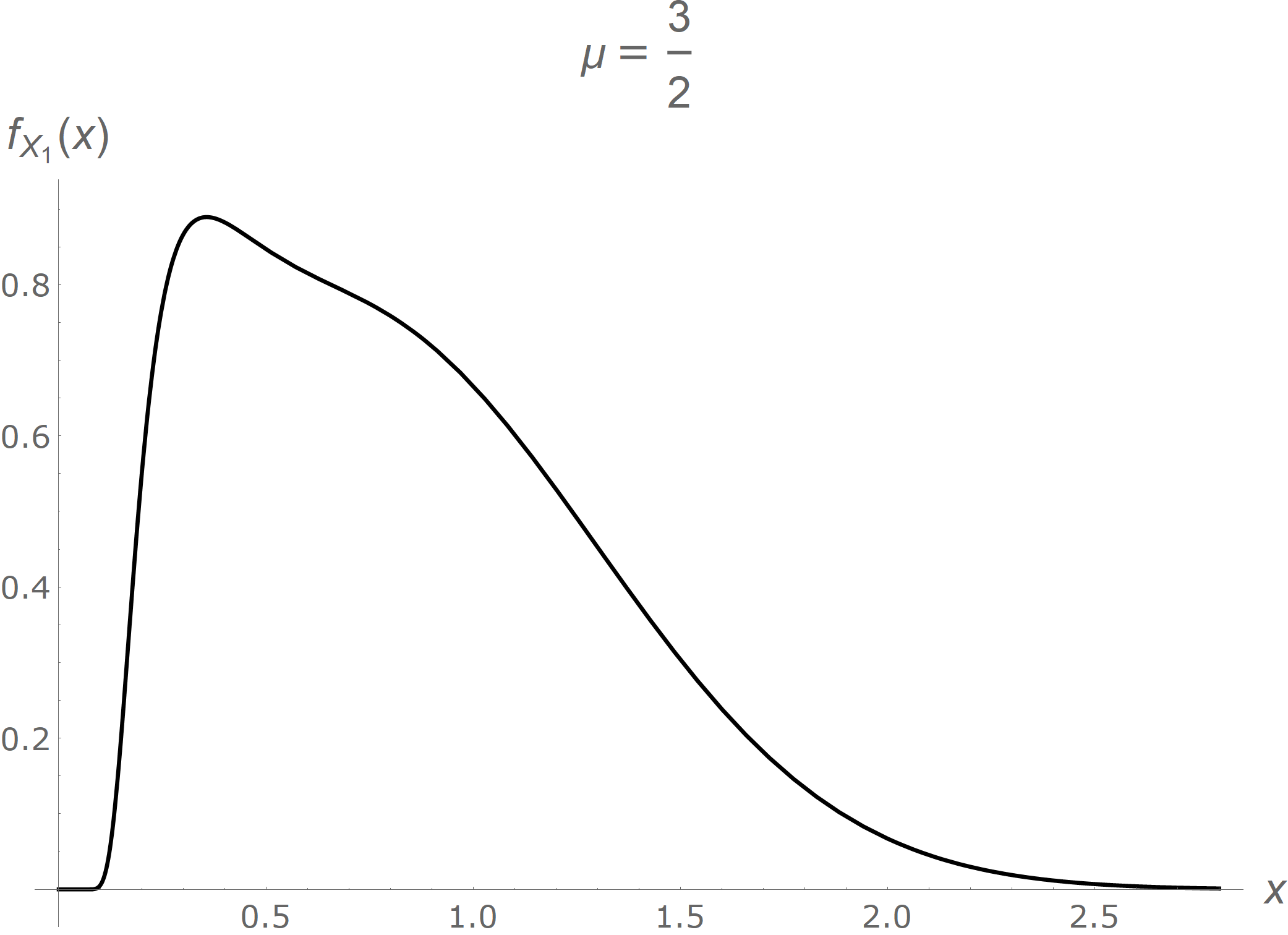}
  \includegraphics[scale=0.45]{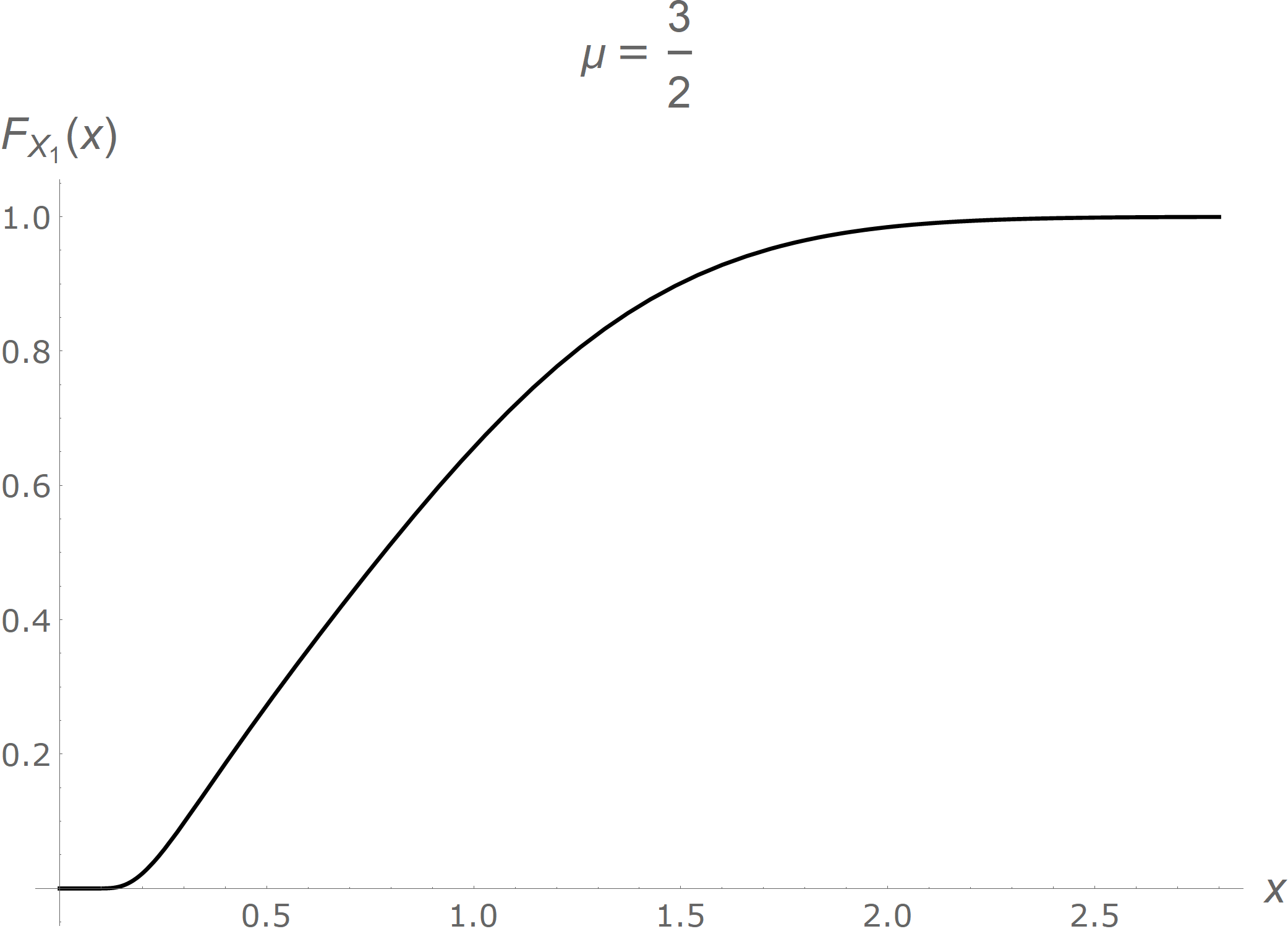}\\
  \caption{Probability density function and cumulative distribution function (III)}
  \label{fig:4}
\end{figure}

\begin{table}[H]\small
  \centering
  \renewcommand{\arraystretch}{1.5}
  \caption{Exact and numerical values of first 10 moments (III)}
  \label{tab:3}
  \begin{tabular}{|c|c|c|}
  \hline
  $n$ & \multicolumn{2}{|c|}{$M_{X_{1}}(n)$, $\mu=\frac{3}{2}$} \\ \hline
  0 & 1 & 1.000000000 \\
  1 & $\frac{97}{108}\erf\frac{3}{2\sqrt{2}}+\frac{5}{9e^{9/8}\sqrt{2\pi}}$ & 0.8500968398 \\
  2 & $\frac{14321}{11664}-\frac{656}{729e^{9/8}}$ & 0.9356522379 \\
  3 & $\frac{2061371}{419904}\erf\frac{3}{2\sqrt{2}}-\frac{4088989}{174960e^{9/8}\sqrt{2\pi}}$ & 1.226260497 \\
  4 & $\frac{4186573451}{45349632}-\frac{197523431}{708588e^{9/8}}$ & 1.818745077 \\
  5 & $\frac{18111847460587}{4897760256}\erf\frac{3}{2\sqrt{2}}-\frac{10086997127497}{408146688e^{9/8}\sqrt{2\pi}}$ & 2.964701664 \\
  6 & $\frac{39760242371105993}{176319369216}-\frac{3827104216015327}{5509980288e^{9/8}}$ & 5.217356259 \\
  7 & $\frac{365610474311453456969}{19042491875328}\erf\frac{3}{2\sqrt{2}} -\frac{2477276832093291558523009}{19288457395384320e^{9/8}\sqrt{2\pi}}$ & 9.795274998 \\
  8 & $\frac{4469320924955467657898617}{2056589122535424}-\frac{2151012108287581567116319}{321342050396160e^{9/8}}$ & 19.45292471 \\
  9 & $\frac{7785311496611354095854216913}{24679069470425088}\erf\frac{3}{2\sqrt{2}} -\frac{23052264721685622550231641951507229}{10924056422790700400640e^{9/8}\sqrt{2\pi}}$ & 40.60370978 \\
  10 & $\frac{152247414041878900221948971378401}{2665339502805909504} -\frac{33530286070371621454356613089942739}{190571774450622529536e^{9/8}}$ & 88.62750339 \\ \hline
  \end{tabular}
\end{table}

\vspace*{0.7in}

\begin{figure}[H]
  \centering
  \includegraphics[scale=0.45]{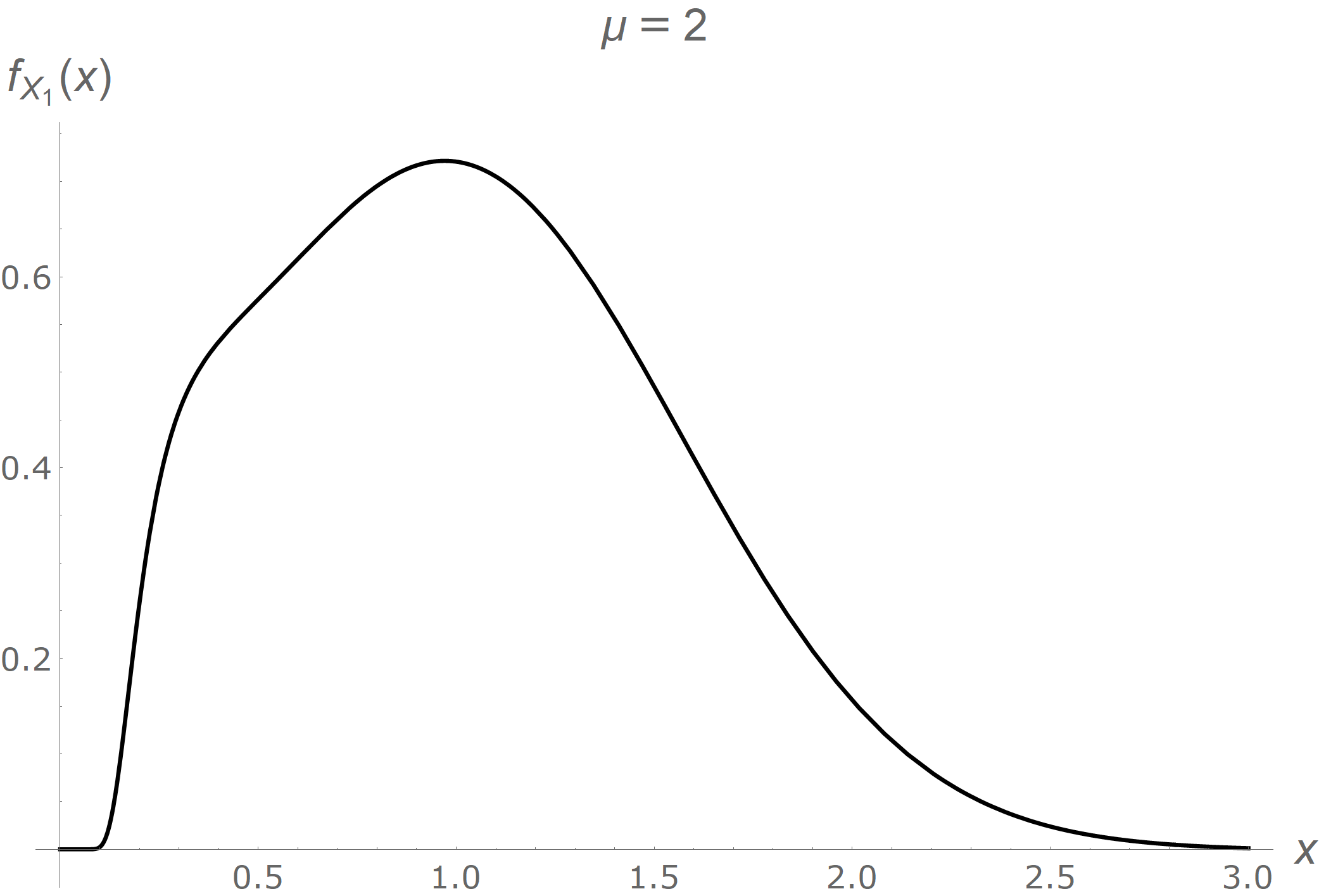}
  \includegraphics[scale=0.45]{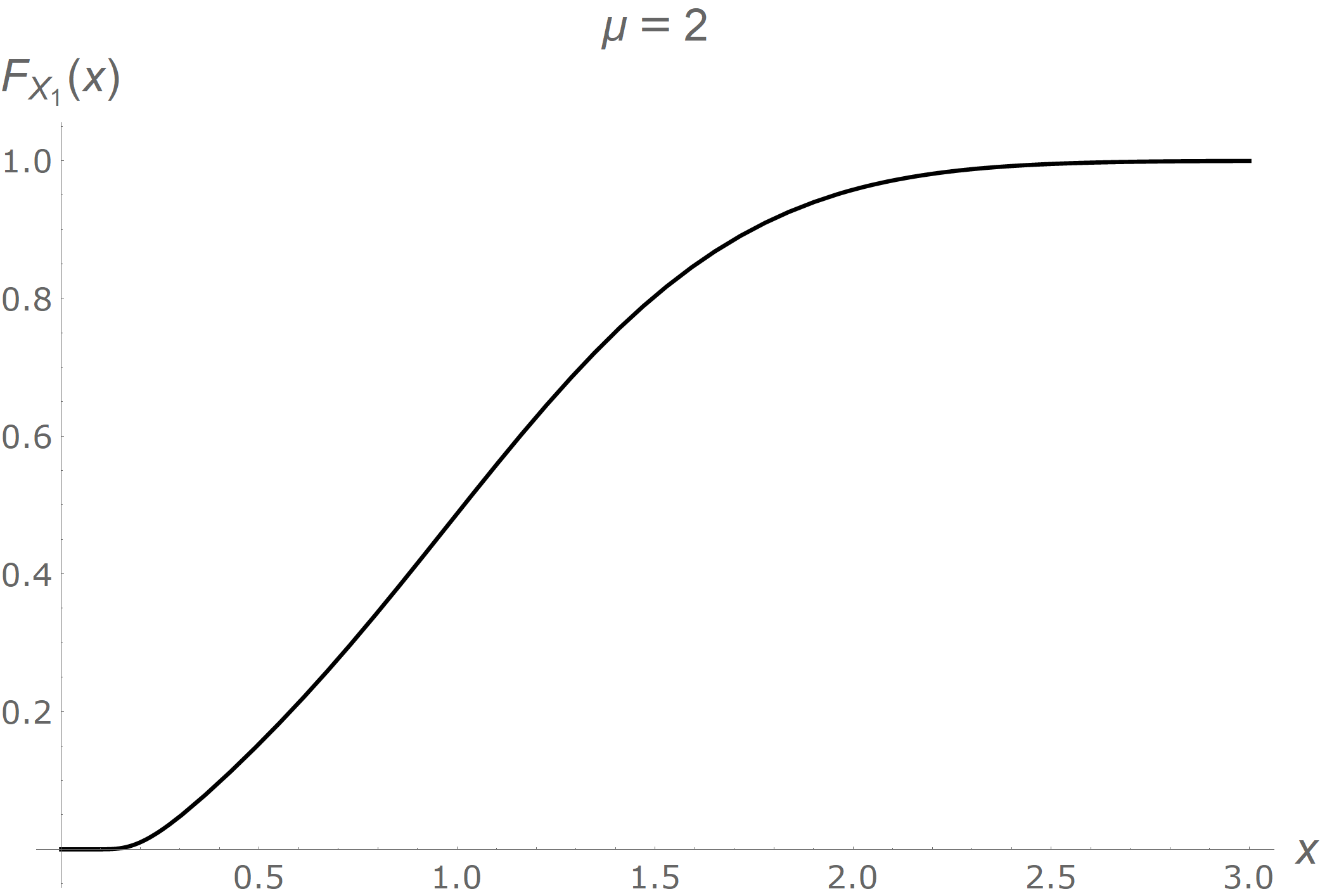}\\
  \caption{Probability density function and cumulative distribution function (IV)}
  \label{fig:5}
\end{figure}

\begin{table}[H]\small
  \centering
  \renewcommand{\arraystretch}{1.5}
  \caption{Exact and numerical values of first 10 moments (IV)}
  \label{tab:4}
  \begin{tabular}{|c|c|c|}
  \hline
  $n$ & \multicolumn{2}{|c|}{$M_{X_{1}}(n)$, $\mu=2$} \\ \hline
  0 & 1 & 1.000000000 \\
  1 & $\frac{17}{16}\erf\sqrt{2}+\frac{3}{4e^{2}\sqrt{2\pi}}$ & 1.054649194 \\
  2 & $\frac{67}{48}-\frac{3}{16e^{2}}$ & 1.370457968 \\
  3 & $\frac{4479}{2048}\erf\sqrt{2}-\frac{1839}{2560e^{2}\sqrt{2\pi}}$ & 2.048717191 \\
  4 & $\frac{29471}{6144}-\frac{21231}{2048e^{2}}$ & 3.393732065 \\
  5 & $\frac{11740501}{393216}\erf\sqrt{2}-\frac{40789601}{98304e^{2}\sqrt{2\pi}}$ & 6.096458892 \\
  6 & $\frac{727680391}{1179648}-\frac{586084689}{131072e^{2}}$ & 11.71447290 \\
  7 & $\frac{815376760909}{37748736}\erf\sqrt{2}-\frac{14584249433910881}{38236323840e^{2}\sqrt{2\pi}}$ & 23.84966824 \\
  8 & $\frac{58026085494757}{56623104}-\frac{79395671621959}{10485760e^{2}}$ & 51.08654013 \\
  9 & $\frac{33552485438869843}{536870912}\erf\sqrt{2}-\frac{41457335045840308194427}{37522579128320e^{2}\sqrt{2\pi}}$ & 114.5088942 \\
  10 & $\frac{22992915872862994063}{4831838208}-\frac{2699457645246760739313}{76772540416e^{2}}$ & 267.4234795 \\ \hline
  \end{tabular}
\end{table}

\clearpage

\section{Concluding remarks}\label{sec:7}

Present a drift component, the absolute-value Brownian integral functional can have a platykurtic-to-leptokurtic distribution. Formulae in terms of rapidly converging series for the distribution functions (Theorem \ref{thm:2} and Theorem \ref{thm:3}) come directly after termwise inverting an asymptotic series representation for the marginalized space Laplace transform (Theorem \ref{thm:1}), with direct implications on small-deviation probabilities (Theorem \ref{thm:5}). The key step in deriving the moment formulae (Theorem \ref{thm:4}) is to obtain asymptotic expansions of the exponential Airy integrals through integration-by-parts and then collect powers via regularizing the incomplete Bell polynomials so that matching terms is possible. Derivation of large-deviation probabilities (Theorem \ref{thm:6}) is based on transforming a double-inverse of the space--time Laplace transform along the lines of \cite[Tolmatz, 2003]{T4}, where the contour deformation is nonetheless realized through a probabilistic argument, in conjunction with various advanced properties of the exponential Airy integrals, which only appear in the presence of drift. Towards this end, we have also developed and applied a generalized Laplace's method for multivariate integral approximation (Lemma \ref{lem:5}). Overall, the results in this paper can be considered a complete characterization of the distribution of the absolute-value integral and they facilitate corresponding applications for both exact and asymptotic computations (refer to Section \ref{sec:1}).

Another class of Brownian integral functionals worth exploring are the sinusoidal type, i.e., functionals of the form $\big(\int^{t}_{0}\cos(\mu s+\sigma W_{s})\dd s\big)_{t\geq0}$. Notably, the centered case ($\mu=0$) subject to constant shifting has been studied in \cite[Pintoux, 2010, \text{Chap.} 4]{P}, using the Feynman--Kac formula; in particular, see \cite[Pintoux, 2010, \text{Prop.} 4.3.2]{P} for a series representation for the space Laplace transform in terms of Mathieu functions. Aside from similar applications mentioned before, such functionals have applications in modeling presumably bounded financial quantities such as interest rates or volatility components (see \cite[Lim and Privault, 2016]{LP} and \cite[Wang and Xia, 2022]{WX}). They are also closely related to the so-called ``imaginary exponential functionals'' considered in \cite[Gredat et al., 2011]{GDL}, which naturally generalize the extensively studied real exponential functionals as well.

On the other hand, further analysis of the distribution -- both exact and asymptotic -- of other power integral functionals of the Brownian motion (with or without drift) except for 1 (as in the present paper) or 2 (as in \cite[Xia, 2020]{X}) will be arduous, simply because the resultant differential equation governing the space Laplace transform from applying the Kac formula does not have solutions expressible in terms of known functions. Speaking of applications, the benefit from such analysis will be marginal as the $\mathrm{L}^{1}$ and $\mathrm{L}^{2}$ cases are arguably the most commonly used.

\vspace{0.2in}

\end{document}